\numberwithin{equation}{section} % danh socong thuc theo section
\newcommand{\R}{\ensuremath{\mathbb{R}}}
\newcommand{\N}{\ensuremath{\mathbb{N}}}
\newcommand{\K}{\mathbb{K}}
\newcommand{\B}{\mathbb{B}}
\newcommand{\cC}{\mathcal{C}}
\newcommand{\X}{\mathbb{X}}
\newcommand{\E}{\mathbb{E}}
\newcommand{\W}{\mathbb{W}}
\newcommand{\cD}{\mathcal{D}}
\newcommand{\cL}{\mathcal{L}}
\newcommand{\cM}{\mathcal{M}}
\newcommand{\bx}{\mathbf x}
\newcommand{\cB}{\mathcal{B}}
\newcommand{\ltn}{\ensuremath{\left| \! \left| \! \left|}}
\newcommand{\rtn}{\ensuremath{\right| \! \right| \! \right|}}
\newtheorem{theorem}{Theorem}[section]
{ \theorembodyfont{\normalfont} %\theorembodyfont{\rmfamily}

}
\newtheorem{definition}[theorem]{Definition}
\newtheorem{lemma}[theorem]{Lemma}
\newtheorem{corollary}[theorem]{Corollary}
\newtheorem{proposition}[theorem]{Proposition}
\newcounter{enumctr}
\newenvironment{enum}{\begin{list}{(\roman{enumctr})}{\usecounter{enumctr}}}{\end{list}}
\begin{document}

\title{Stability theory for Gaussian rough differential equations. Part I.}

\author{Luu Hoang Duc\\Max-Planck-Institut f\"ur Mathematik in den Naturwissenschaften, $\&$\\Institute of Mathematics, Vietnam Academy of Science and Technology\\ {\it E-mail: duc.luu@mis.mpg.de, lhduc@math.ac.vn}
}
\date{}
\maketitle

\begin{abstract}
We propose a quantitative direct method of proving the stability result for Gaussian rough differential equations in the sense of Gubinelli \cite{gubinelli}. Under the strongly dissipative assumption of the drift coefficient function, we prove that the trivial solution of the system under small noise is exponentially stable. 
\end{abstract}

{\bf Keywords:}
stochastic differential equations (SDE), Young integral, rough path theory, rough differential equations, exponential stability.

%%%%%%%%%%%%%%%%%%%%%%%%%%%%%%%%%%%%%%%%%%%%

\section{Introduction}

This paper deal with the asymptotic stability criteria for rough differential equations of the form
\begin{equation}\label{RDE1}
dy_t = [A y_t + f(y_t)] dt + G(y_t)dx_t,
\end{equation}
or in the integral form
\begin{equation}\label{RDE2}
y_t = y_a + \int_a^t [A y_u + f(y_u) ]du + \int_a^t G(y_u) dx_u,\qquad t\in [a,T];
\end{equation}
where the nonlinear part $f: \R^d \to \R^d$ is globally Lipschitz function for simplicity and $G = (G_1,\ldots,G_m)$ is a collection of vector fields $G_j : \R^d \to \R^d$ such that
\begin{equation}\label{G}
G(y) = \begin{cases}
g(y), \text{where\ }  g = (g_1,\ldots, g_m), g_j \in C^{1+ \text{Lip}} \text{\ if\ } \nu \in (\frac{1}{2},1) \\
Cy, \quad \text{where} \quad C = (C_1,\ldots,C_m), C_j \in \R^{d\times d},\quad  \text{if}\quad \nu \in (\frac{1}{3},\frac{1}{2})\\
g(y), \quad \text{where} \quad g = (g_1,\ldots, g_m), g_j \in C^{3}_b(\R^d,\R^d), \quad  \text{if} \quad \nu \in (\frac{1}{3},\frac{1}{2}). 
\end{cases}
\end{equation}
Equation \eqref{RDE1} can be viewed as a controlled differential equation driven by rough path $x \in C^{\nu}([a,T],\R^m)$ for $\nu \in (\frac{1}{3},1]$, in the sense of Lyons \cite{lyons98}, \cite{lyonsetal07} where $x$ can also be considered as an element of the space $C^{p-{\rm var}}([a,T],\R^m)$ of finite $p$ - variation norm, with $p\nu \geq 1$. For instance, given $\bar{\nu} \in (\frac{1}{3},1]$, the path $x$ might be a realization of a $\R^m$-valued centered Gaussian process satisfying: there exists for any $T>0$ a constant $C_T$ such that for all $p \geq \frac{1}{\bar{\nu}}$
\begin{equation}\label{Gaussianexpect}
E \|X_t- X_s\|^{p} \leq C_T |t-s|^{p\bar{\nu}},\quad \forall s,t \in [0,T]. 
\end{equation}
By Kolmogorov theorem, for any $\nu \in (0,\bar{\nu})$ and any interval $[0,T]$ almost all realization of $X$ will be in $C^\nu([0,T])$. Such a stochastic process, in particular, can be a fractional Brownian motion $B^H$  \cite{mandelbrot} with Hurst exponent $H \in (\frac{1}{3},1)$, i.e. a family of $B^H = \{B^H_t\}_{t\in \R}$ with continuous sample paths and 
\[
E \|B^H_t- B^H_s\| = |t-s|^{2H}, \forall t,s \in \R.
\]
In this paper, we would like to approach system \eqref{RDE1}, where the second integral is well-understood as rough integral in the sense of Gubinelli \cite{gubinelli}. 
Such system satisfies the existence and uniqueness of solution given initial conditions, see e.g. \cite{gubinelli} or \cite{frizhairer} for a version without drift coefficient function, and \cite{riedelScheutzow} for a full version using $p$ - variation norms.

Notice that the question for global asymptotic dynamics of system \eqref{RDE1} is studied in \cite{hairer15}, \cite{hairer03}, \cite{hairer07}, \cite{hairer11}, \cite{hairer13}, under the general dissipativity condition for the drift coefficient function, in which they prove that there exists a unique smooth stationary density for \eqref{RDE1}, with convergence rate is either exponential or polynomial, depending on Hurst index $H$. \\
Meanwhile, the topic of asymptotic stability for pathwise solution of \eqref{RDE1} is studied in \cite{ducGANSch18} for which the noise is assumed to be fractional Brownian motion with small intensity. In addition, the local stability is studied in \cite{garrido-atienzaetal} and in \cite{GABSch18} for which the diffusion coefficient $g(x)$ is rather flat, i.e. $g(0) = D_yg(0) = 0$ for the Young differential equations and $g(0) = D_yg(0) = D_{yy}g(0)= 0$ for the rough differential equations. In all mentioned references, the technique in use is semigroup technique together with the help of fractional calculus.\\
To study the local stability, we impose conditions for matrices $A\in \R^{d\times d}$ such that $A$ is negative definite, i.e. there exists a $\lambda_A >0$ such that
\begin{equation}\label{lambda}
\langle y, Ay \rangle \leq - \lambda_A \|y\|^2.
\end{equation}
The strong condition \eqref{lambda} is still able to cover interesting cases, for instance all matrices with negative real part eigenvalues, under a transformation, since there exists a positive definite matrix $\mathcal{Q}$ which is the solution of the matrix equation
\[
A^{\rm T} \mathcal{Q} + \mathcal{Q} A = D
\]
where $D$ is a symmetric negative definite matrix \cite[Chapter 2 \& Chapter 5]{burton}.\\
To study the local stability, we will assume that the nonlinear part $f: \R^d \to \R^d$ is locally Lipschitz function such that 
\begin{equation}\label{condf}
f(0) =0\quad \text{and} \quad \|f(y)\| \leq \|y\| h(\|y\|)
\end{equation}
where $h: \R^+ \to \R^+$ is an increasing function which is bounded above by a constant $C_f$. Our assumption is somehow still global, but it has an advantage of being able to treat the local dynamics as well. We refer to \cite{garrido-atienzaetal} and \cite{GABSch18} for real local versions on a small neighborhood $B(0,\rho)$ of the trivial solution, using the cutoff technique.\\  
In this paper, we also assume that $g(0) = 0$ and $g \in C^{3}_b$ in case $\nu\in (\frac{1}{3},\frac{1}{2})$ with bounded derivatives $C_g$ (which also include the Lipschit coefficient of the highest derivative). System \eqref{RDE1} then admits an equilibrium which is the trivial solution. Our main stability results are then formulated as follows.

\begin{theorem}[Stability for Young systems]\label{globalstab}
	Assume $X_\cdot(\omega)$ is a Gaussian process satisfying \eqref{Gaussianexpect}, and $\bar{\nu}>\nu >\frac{1}{2}$ is fixed. Assume further that conditions \eqref{lambda}, \eqref{condf} are satisfied, where $\lambda_A > h(0)$. Then there exists an $\epsilon >0$ such that given $C_g < \epsilon$, and for almost sure all realizations $x_\cdot=X_\cdot(\omega)$, the zero solution of \eqref{RDE1} is locally exponentially stable. If in addition $\lambda_A > C_f$, then we can choose $\epsilon$ so that the zero solution of \eqref{RDE1} is globally exponentially stable a.s. 
\end{theorem}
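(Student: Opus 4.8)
The plan is to run a direct Lyapunov-type argument with the energy functional $V(y)=\|y\|^2$, exploiting the ODE-type change-of-variables for Young integrals (legitimate because $\nu>\tfrac12$) and taming the noise term by partitioning the time axis into intervals on which the local roughness of $x$ is of a fixed small size. Throughout, $\|\cdot\|_{\nu,J}$ denotes the $\nu$-H\"older seminorm on an interval $J$, and we freely pass between $\nu$-H\"older and $p$-variation bounds with $p\in(1/\nu,2)$.

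Writing \eqref{RDE2} in the Young setting and applying the chain rule to $t\mapsto\|y_t\|^2$ gives, for $a\le s\le t$,
\[
\|y_t\|^2=\|y_s\|^2+\int_s^t 2\langle y_u,\,Ay_u+f(y_u)\rangle\,du+\int_s^t \psi(y_u)\,dx_u,\qquad \psi(y):=2\langle y,g(y)\rangle,
\]
the last integral being a Young integral. By \eqref{lambda} and \eqref{condf} the drift density is $\le-2\big(\lambda_A-h(\|y_u\|)\big)\|y_u\|^2$. Fix $\rho>0$ small enough that $h(\rho)<\lambda_A$ (possible since $h$ is increasing and $h(0)<\lambda_A$) and set $\tilde\lambda:=\lambda_A-h(\rho)>0$; then the drift density is $\le-2\tilde\lambda\|y_u\|^2$ whenever $\|y_u\|\le\rho$. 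In the globally stable case we instead use $h(\|y\|)\le C_f<\lambda_A$ and set $\tilde\lambda:=\lambda_A-C_f$, with no restriction on $\|y\|$. For the noise term, $\psi(0)=0$, $|\psi(y)|\le 2C_g\|y\|^2$ and $\|D\psi(y)\|\le 4C_g\|y\|$, so the Young--Lo\`eve estimate on any subinterval $J=[\sigma,\sigma']$ gives
\[
\Big|\int_J\psi(y_u)\,dx_u\Big|\le|\psi(y_\sigma)|\,\|x\|_{\nu,J}|\sigma'-\sigma|^{\nu}+c_\nu\,\|\psi(y)\|_{\nu,J}\,\|x\|_{\nu,J}|\sigma'-\sigma|^{2\nu},
\]
with $\|\psi(y)\|_{\nu,J}\le 4C_g\big(\sup_J\|y\|\big)\|y\|_{\nu,J}$ and $\|y\|_{\nu,J}$ itself bounded, by feeding the estimate back into \eqref{RDE2}, in terms of $\sup_J\|y\|$, $|A|$, $C_f$, $C_g$ and $\|x\|_{\nu,J}|\sigma'-\sigma|^\nu$.

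Now fix $\mu>0$ and define the greedy times $\tau_0=a$, $\tau_{i+1}=\inf\{t>\tau_i:\ \|x\|_{p\text{-var},[\tau_i,t]}=\mu\}$, so that on each $I_i:=[\tau_i,\tau_{i+1}]$ the local roughness of $x$ is $\mu$ and all the $g$-contributions above have size $O(C_g\mu)$ relative to $\sup_{I_i}\|y\|^2$. Combining this with the sign of the drift and a Gronwall comparison — for $\phi(s):=\|y_s\|^2$ one gets $\phi(s)\le\phi(\tau_i)+R_i-2\tilde\lambda\int_{\tau_i}^s\phi(u)\,du$, hence $\phi(s)\le(\phi(\tau_i)+R_i)e^{-2\tilde\lambda(s-\tau_i)}$ with $R_i$ the noise contribution, of size $O(C_g\mu)\sup_{I_i}\|y\|^2$ — yields, for $C_g\mu$ small, an a priori bound $\sup_{I_i}\|y\|^2\le(1-\eta)^{-1}\|y_{\tau_i}\|^2$ and the one-step inequality $\|y_{\tau_{i+1}}\|^2\le(1+\eta)e^{-2\tilde\lambda(\tau_{i+1}-\tau_i)}\|y_{\tau_i}\|^2$, where $\eta=\eta(C_g,\mu)=O(C_g\mu)$. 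Iterating over the greedy intervals in $[a,t]$ and using $\sum_i(\tau_{i+1}-\tau_i)=t-a$, together with the key quantitative input that the number $N_{a,t}$ of such intervals is a.s.\ at most linear in $t$ — this is exactly where the Gaussian hypothesis \eqref{Gaussianexpect} enters, via the $p$-superadditivity $N_{a,t}\,\mu^{p}\le\|x\|_{p\text{-var},[a,t]}^{p}$ and a growth estimate of Cass--Litterer--Lyons type bounding $\|x\|_{p\text{-var},[a,t]}^{p}$, so that $N_{a,t}\le\kappa_1(t-a)+\kappa_2(\omega)$ with $\kappa_1$ deterministic — one obtains
\[
\|y_t\|^2\le(1+\eta)^{\,N_{a,t}+1}\,e^{-2\tilde\lambda(t-a)}\,\|y_a\|^2\le C(\omega)\,e^{-(2\tilde\lambda-\eta\kappa_1)(t-a)}\,\|y_a\|^2 .
\]
Choosing $\epsilon>0$ small enough that $C_g<\epsilon$ forces $\eta\kappa_1<\tilde\lambda$ yields exponential decay with a deterministic rate and an a.s.\ finite prefactor $C(\omega)$.

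To conclude the local statement, run the above on $[a,T^{*})$ with $T^{*}:=\sup\{T>a:\ \sup_{[a,T]}\|y\|<\rho\}$: there all the estimates are valid, and they give $\|y_t\|^2<\rho^2$ as soon as $\|y_a\|<\delta(\omega):=\rho/\sqrt{C(\omega)}$, which by continuity of $t\mapsto y_t$ is incompatible with $T^{*}<\infty$; hence $T^{*}=\infty$ and the decay holds on $[a,\infty)$, giving both attractivity and (through the prefactor) the Lyapunov part of local exponential stability. For the global statement, $\lambda_A>C_f$ lets us take $\tilde\lambda=\lambda_A-C_f$ and drop the cutoff at $\rho$, so the identical computation runs from any $\|y_a\|$ after re-choosing $\epsilon$. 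The point I expect to be the genuine obstacle is making the one-step-to-global passage uniform in time: short greedy intervals each accumulate a multiplicative factor $1+\eta$ while contributing nothing to the dissipative integral $\int\|y_u\|^2\,du$, so the argument closes only because $N_{a,t}$ is at most linear in $t$ (capping the noise-induced rate by $\eta\kappa_1$) and because the smallness is imposed on $C_g$, with $\mu$ fixed first, so that $\eta\kappa_1<\tilde\lambda$; keeping the constants $\kappa_1,\eta,C(\omega)$ and the threshold $\delta$ mutually consistent through this bootstrap is the technical heart.
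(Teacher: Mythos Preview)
Your central Gronwall step is incorrect. From the integral inequality
\[
\phi(s)\le C-2\tilde\lambda\int_{\tau_i}^s\phi(u)\,du,\qquad C:=\phi(\tau_i)+R_i,
\]
you \emph{cannot} conclude $\phi(s)\le Ce^{-2\tilde\lambda(s-\tau_i)}$. A clean counterexample: $\phi\equiv\varepsilon$ (small) satisfies the inequality on $[\tau_i,\tau_i+(1-\varepsilon)/(2\tilde\lambda\varepsilon)]$, yet does not decay at all. The point is that once you replace the \emph{signed} Young integral $\int\psi(y)\,dx$ by an absolute upper bound $R_i$, you destroy the differential structure that would have permitted a genuine Gronwall comparison; all that survives is $\phi(s)\le C$. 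Consequently your one-step estimate $\|y_{\tau_{i+1}}\|^2\le(1+\eta)e^{-2\tilde\lambda(\tau_{i+1}-\tau_i)}\|y_{\tau_i}\|^2$ is unproved, and the whole iteration collapses.

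The repair is exactly what the paper does for the global case: apply the Young product rule to $e^{2\tilde\lambda t}\|y_t\|^2$ \emph{before} estimating. The drift then becomes non-positive and one obtains
\[
e^{2\tilde\lambda t}\|y_t\|^2\le e^{2\tilde\lambda a}\|y_a\|^2+\int_a^t e^{2\tilde\lambda s}\psi(y_s)\,dx_s,
\]
after which the Young integral is bounded over unit intervals (using the a~priori growth estimate for $\|y\|_{q\text{-var}}$) and a \emph{discrete} Gronwall lemma closes the loop. For the local statement, the paper takes a genuinely different route: it works with $\log\|y_t\|$ and the angular variable $\theta_t=y_t/\|y_t\|$, proves a $q$-variation bound for $\theta$ via a greedy-time argument (Lemma~\ref{roughy}), and then feeds the resulting inequality for $\log\|y_t\|$ into a random-norm comparison (Lemma~\ref{localstabPro}) that carefully handles the initial random stretch before the ergodic averages stabilise. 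Your continuation argument at radius $\rho$ is reasonable in spirit, but it also rests on the broken one-step decay, and the bootstrap on $C(\omega)$ would need a mechanism, analogous to the random time $m(\omega)$ in Lemma~\ref{localstabPro}, to absorb the transient growth of $N_{a,t}$.
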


\begin{theorem}[Stability for rough systems]\label{stablinRDE}
	Assume $X_\cdot(\omega)$ is a Gaussian process satisfying \eqref{Gaussianexpect}, and $\frac{1}{2}>\bar{\nu}>\nu >\frac{1}{3}$ is fixed. Assume further that $G(y) = Cy$ and conditions \eqref{lambda}, \eqref{condf} are satisfied, where $\lambda_A > h(0)$. Then the conclusion of Theorem \ref{globalstab} on local stability of the zero solution holds for almost sure all realizations $x$ of $X$. If in addition $\lambda_A > C_f$, then we can choose $\epsilon$ so that the zero solution of \eqref{RDE1} is globally exponentially stable a.s. 
\end{theorem}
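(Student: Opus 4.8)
\medskip

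\noindent\emph{Proof strategy.} Both statements are obtained by the same quantitative, pathwise scheme as Theorem~\ref{globalstab}; the only structural change is that for $\nu\in(\frac13,\frac12)$ one works in Gubinelli's space of controlled rough paths and the second level $\X$ of the rough path enters all estimates. Since $G(y)=Cy$ is linear, the pair $(y,y')$ with Gubinelli derivative $y'_t=Cy_t$ is controlled by $x$, the rough integral $\int Cy\,dx$ is well defined by the sewing lemma, and --- this is what makes the linear case work below the Young threshold --- the class of such controlled paths is stable under $y\mapsto Cy$; the role played by $C_g$ in Theorem~\ref{globalstab} is now played by $|C|:=\max_j\|C_j\|$. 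For $[s,t]\subset[a,\infty)$ with $|t-s|\le1$ put $\omega(s,t):=\|x\|_{\nu,[s,t]}|t-s|^{\nu}+\|\X\|_{2\nu,[s,t]}|t-s|^{2\nu}$. The a-priori estimates for rough differential equations (\cite{gubinelli}, \cite{frizhairer}, \cite{riedelScheutzow} in $p$-variation), together with $f(0)=0$, \eqref{condf} and the linearity of $G$, furnish a threshold $\delta_0>0$ and a constant $K$ --- depending on $d,m,\|A\|,\sup h$ and polynomially on $|C|$, hence bounded for $|C|\le1$ --- such that $\omega(s,t)\le\delta\le\delta_0$ forces existence on $[s,t]$, no finite-time blow-up, and $\sup_{u\in[s,t]}\|y_u-y_s\|\le K\|y_s\|(\delta+|t-s|)$.

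\medskip\noindent\textbf{One-step estimate.} Because $h(0)<\lambda_A$ (resp.\ $\sup h=C_f<\lambda_A$ in the global case) choose $\rho_0>0$ with $h(\rho_0)<\lambda_A$ and put $\mu:=2(\lambda_A-h(\rho_0))>0$ (resp.\ $\rho_0=\infty$, $\mu:=2(\lambda_A-C_f)$). Fix a block $[s,t]$ with $\omega(s,t)\le\delta$, $\ell:=t-s\le\ell_{\max}$ and $\|y_s\|\le\rho_0$, where $\ell_{\max}\le1$ is a fixed small length chosen below, and write $z_u=\|y_u\|^2$, $D=\int_s^t(Ay_u+f(y_u))\,du$, $I=\int_s^t Cy_u\,dx_u$, so that
\[
z_t-z_s=2\langle y_s,D\rangle+2\langle y_s,I\rangle+\|D+I\|^2 .
\]
Replacing $y_s$ by $y_u$ in the first term and using \eqref{lambda}, \eqref{condf} gives $2\langle y_s,D\rangle\le-\mu\int_s^t z_u\,du+O\!\big(K(\|A\|+h(\rho_0))(\delta+\ell)\ell\,z_s\big)\le-\frac{\mu}{4}\ell\,z_s$ once $\delta,\ell_{\max}$ are small (here also $z_u\ge\frac12 z_s$ on the block). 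The rough integral is, by the sewing lemma, $I=Cy_s\,x_{s,t}+(\text{a term of size }\le|C|^2\|y_s\|\,\|\X\|_{2\nu,[s,t]}|t-s|^{2\nu})+O(|t-s|^{3\nu})$, and since every power of $\|x\|_{\nu,[s,t]}$ (resp.\ $\|\X\|_{2\nu,[s,t]}$) is paired with the matching power of $|t-s|^{\nu}$ (resp.\ $|t-s|^{2\nu}$), one gets $\|I\|\le c_0|C|\,\delta\,\|y_s\|$; hence $2\langle y_s,I\rangle\le 2c_0|C|\,\delta\,z_s$. Finally $\|D+I\|^2\le 8(\|A\|+h(\rho_0))^2\ell^2z_s+2c_0^2|C|^2\delta^2z_s$. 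Choosing $\ell_{\max}$ and then $\delta$ so small that the $O(\ell^2)$ and the $y_u$-replacement contributions are each $\le\frac{\mu}{8}\ell\,z_s$, and using $|C|\le1$, we arrive at
\[
z_t\le\Big(1-\tfrac{\mu}{8}\ell+c_1|C|\,\delta\Big)z_s ,\qquad\text{so that}\qquad \|y_t\|\le\exp\!\Big(-\tfrac{\mu}{16}\ell+\tfrac{c_1}{2}|C|\,\delta\Big)\|y_s\| ,
\]
with $c_1$ depending only on $c_0$.

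\medskip\noindent\textbf{Iteration and conclusion.} Define greedy times $\tau_0=a$, $\tau_{i+1}=\big(\inf\{t>\tau_i:\omega(\tau_i,t)=\delta\}\big)\wedge(\tau_i+\ell_{\max})$, so that $\omega(\tau_i,\tau_{i+1})\le\delta$ and $\tau_{i+1}-\tau_i\le\ell_{\max}$ on every block, and the number $N_{[a,T]}$ of blocks in $[a,T]$ is a.s.\ finite. If $\|y_a\|\le\rho_0$ the a-priori estimate propagates $\|y_{\tau_i}\|\le\rho_0$ for all $i$ --- the restriction to a neighbourhood of the origin in the local case entering exactly as in Theorem~\ref{globalstab} --- and iterating the one-step estimate over the blocks in $[a,t]$ gives
\[
\|y_t\|\le\|y_a\|\exp\!\Big(-\tfrac{\mu}{16}(t-a)+\tfrac{c_1}{2}|C|\,\delta\,N_{[a,t]}\Big).
\]
By the tail estimates for the accumulated local $\nu$-variation of a Gaussian rough path (Cass--Litterer--Lyons; cf.\ also the corresponding bounds for fractional Brownian motion in \cite{ducGANSch18}), $N_{[k,k+1]}$ has stretched-exponential moments, so --- the increments of $X$ being stationary and ergodic --- Birkhoff's theorem yields a deterministic $\rho=\rho(\delta,\mathrm{law}(X))<\infty$ with $\limsup_{t\to\infty}N_{[a,t]}/(t-a)\le\rho$ a.s. Choose $\epsilon\in(0,1]$ so small that $\tfrac{c_1}{2}\epsilon\,\delta\,\rho<\tfrac{\mu}{32}$. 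Then for every $|C|<\epsilon$ and almost every $\omega$ there is $T_0(\omega)<\infty$ with $\|y_t\|\le\|y_a\|e^{-(\mu/32)(t-a)}$ for $t\ge a+T_0(\omega)$, while on $[a,a+T_0(\omega)]$ the a-priori estimate bounds $\|y_t\|$ by $C(\omega)\|y_a\|$, $C(\omega)<\infty$. This is exponential stability with a deterministic rate: local when $\rho_0<\infty$, $\mu=2(\lambda_A-h(\rho_0))$, and global when $\rho_0=\infty$, $\mu=2(\lambda_A-C_f)$.

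\medskip\noindent\textbf{Where the difficulty lies.} The scheme is the same as for Theorem~\ref{globalstab}; the genuinely new work is the one-step estimate, and within it the point that the noise-induced growth on a block must come out proportional to $|C|\,\delta$ with \emph{no} leftover that is free of both $|C|$ and the block length $\ell$. Indeed, if an $O(\delta^2)$ term survived, summing it over the $\asymp\rho(\delta)(t-a)$ greedy blocks would overwhelm the $-\frac{\mu}{16}(t-a)$ gain, since for $\nu<\frac12$ one has $\delta^{2}\rho(\delta)\to\infty$ as $\delta\to0$. This forces one to estimate $y_t-y_s$, and especially the quadratic term $\|y_t-y_s\|^2$, directly from the equation --- where the rough contribution $I$ carries a factor $|C|$ --- rather than from the generic a-priori bound, and to carry the exact dependence on $|C|$, on $\|y_s\|$ and on the local rough-path size through the sewing lemma. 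Once this is in hand, the greedy-partition bookkeeping and the ergodic estimate for $N_{[a,t]}$ are routine and mirror the Young case.
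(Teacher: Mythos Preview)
Your approach is sound in outline but takes a genuinely different route from the paper. The paper does \emph{not} work with the increment decomposition $z_t-z_s=2\langle y_s,D+I\rangle+\|D+I\|^2$; instead it applies the rough change-of-variables formula to derive an RDE for $\log\|y_t\|$ in terms of the angular variable $\theta_t=y_t/\|y_t\|$ (equations \eqref{logx} and \eqref{RDEangle}). The central technical step is Proposition~\ref{roughGronwall}, which bounds the controlled-path seminorm $\ltn\theta,\theta'\rtn_{x,2\alpha}$ by a polynomial in the rough-path norms, \emph{uniformly in} $\|y\|$ --- this is exactly where linearity $G(y)=Cy$ is exploited, since the $\theta$-equation \eqref{RDEangle} closes on the unit sphere with no reference to the radius. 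The resulting integral inequality \eqref{logx3} for $\log\|y_t\|$ is then fed into Lemmas~\ref{localstabPro} and~\ref{globalstabPro}, which handle the local case via the random-norm technique of \cite{arnold}. Your scheme sidesteps both the angular decomposition and the rough It\^o formula, replacing them by a direct multiplicative one-step estimate on greedy blocks; your observation that every noise-induced term on a block carries a factor $|C|$ is precisely the substitute for the radius-independence of the $\theta$-equation. What the paper's route buys is a clean continuous-time integral inequality that plugs straight into the abstract Lemmas~\ref{localstabPro}--\ref{globalstabPro}; what yours buys is that one never needs the change-of-variables formula or the auxiliary RDE for $\theta$.

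One point needs tightening. In the local case you assert that ``the a-priori estimate propagates $\|y_{\tau_i}\|\le\rho_0$ for all $i$'', but this does not follow from your one-step estimate, which permits growth by a factor $e^{c_1|C|\delta/2}$ on blocks with small $\ell_i$. Since many such blocks can accumulate before the Birkhoff average settles, $\|y\|$ may exit the ball of radius $\rho_0$, after which $\mu=2(\lambda_A-h(\rho_0))$ is no longer available. The fix is a stopping-time argument: set $\sigma=\inf\{t:\|y_t\|>\rho_0\}$, observe that the one-step estimate is valid on $[a,\sigma)$, and use the a.s.\ finiteness of $M(\omega):=\sup_{t\ge a}\big(\tfrac{c_1}{2}|C|\delta\,N_{[a,t]}-\tfrac{\mu}{16}(t-a)\big)$ --- a quantity depending only on the rough path --- to show $\|y_t\|<\rho_0$ on $[a,\sigma)$ whenever $\|y_a\|<\tfrac12\rho_0 e^{-M(\omega)}$, forcing $\sigma=\infty$. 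This random-radius argument is exactly what Lemma~\ref{localstabPro} accomplishes (via the continuous integral inequality and the $\tau_\infty$ continuity argument there), and it is not automatic from your sketch.
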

Our method follows the direct method of Lyapunov, which aims to estimate the norm growth (or a Lyapunov-type function) of the solution in discrete intervals using the rough estimates for the angular equation which is feasible thanks to the change of variable formula for rough integral defined in the sense of Gubinelli. It is then sufficient to study the local and global exponential stablity of the corresponding random differential inequality, which can be done with random norm techniques in \cite{arnold}. We show in Part I that our method works for Young equations or for rough systems in which $G(y)= Cy$, since it is not necessary to prove the integrability of $\ltn \theta, \theta^\prime \rtn_{x,2\alpha,[a,b]}$ in order to get the pathwise stability. \\
Part II \cite{duc19part1} is to present the result for the case $G(y) = g(y) \in C^3_b$, for which a necessary assumption is the integrability of solution. This assumption is straightforward for Young equations but not trivial for the rough case, and even difficult to prove under the H\"older norm. Specifically, the concept of {\it greedy times} for H\"older norms and similar result to \cite[Theorem 6.3]{cassetal} on the main tail estimate of the number of greedy time under the $\alpha$-H\"older norm is not easy to prove. Fortunately, we can overcome this issue by studying Gubinelli approach under the modified $(p-\sigma)$ - variation seminorms in order to apply \cite[Theorem 6.3]{cassetal} directly.\\
We close the introduction part with a note that our method still works for the case $\nu \in (\frac{1}{4},\frac{1}{3}]$ with an extension of Gubinelli derivative to the second order, although the computation would be rather complicated. Moreover, it could also be applied for proving the general case in which $g$ is unbounded, even though we then need to prove the existence and uniqueness theorem and also the integrability of the solution. The reader is referred to \cite{lejay} and \cite{coutinlejay} for this approach, in which the differential equation is understood in the sense of Davie \cite{davie}.

\section{Rough differential equations}
\subsection{$\nu \in (\frac{1}{2},1)$: Young differential equations}

We would like to give a brief introduction to Young integrals. Given any compact time interval $I \subset \R$, let $C(I,\R^d)$ denote the space of all continuous paths $y:\;I \to \R^d$ equipped with sup norm $\|\cdot\|_{\infty,I}$ given by $\|y\|_{\infty,I}=\sup_{t\in I} \|y_t\|$, where $\|\cdot\|$ is the Euclidean norm in $\R^d$. We write $y_{s,t}:= y_t-y_s$. For $p\geq 1$, denote by $\cC^{p{\rm-var}}(I,\R^d)\subset C(I,\R^d)$ the space of all continuous path $y:I \to \R^d$ which is of finite $p$-variation 
\begin{eqnarray}
\ltn y\rtn_{p\text{-var},I} :=\left(\sup_{\Pi(I)}\sum_{i=1}^n \|y_{t_i,t_{i+1}}\|^p\right)^{1/p} < \infty,
\end{eqnarray}
where the supremum is taken over the whole class of finite partition of $I$. $\cC^{p{\rm-var}}(I,\R^d)$ equipped with the $p-$var norm
\begin{eqnarray*}
	\|y\|_{p\text{-var},I}&:=& \|y_{\min{I}}\|+\ltn y\rtn_{p\rm{-var},I},
\end{eqnarray*}
is a nonseparable Banach space \cite[Theorem 5.25, p.\ 92]{friz}. Also for each $0<\alpha<1$, we denote by $C^{\alpha}(I,\R^d)$ the space of H\"older continuous functions with exponent $\alpha$ on $I$ equipped with the norm
\[
\|y\|_{\alpha,I}: = \|y_{\min{I}}\| + \ltn y\rtn_{\alpha,I}=\|y(a)\| + \sup_{s<t\in I }\frac{\|y_{s,t}\|}{(t-s)^\alpha},
\]
A continuous map $\overline{\omega}: \Delta^2(I)\longrightarrow \R^+, \Delta^2(I):=\{(s,t): \min{I}\leq s\leq t\leq \max{I}\}$ is called a {\it control} if it is zero on the diagonal and superadditive, i.e.  $\overline{\omega}_{t,t}=0$ for all $t\in I$, and  $\overline{\omega}_{s,u}+\overline{\omega}_{u,t}\leq \overline{\omega}_{s,t}$ for all $s\leq u\leq t$ in $I$.\\
Now, consider $y\in \cC^{q{\rm-var}}(I,\cL(R^m,\R^d))$ and $x\in \cC^{p{\rm -var}}(I,\R^m)$ with  $\frac{1}{p}+\frac{1}{q}  > 1$, the Young integral $\int_I y_t d x_t$ can be defined as 
\[
\int_I y_s d x_s:= \lim \limits_{|\Pi| \to 0} \sum_{[u,v] \in \Pi} y_u x_{u,v} ,
\]
where the limit is taken on all the finite partition $\Pi=\{ \min{I} =t_0<t_1<\cdots < t_n=\max{I} \}$ of $I$ with $|\Pi| := \displaystyle\max_{[u,v]\in \Pi} |v-u|$ (see \cite[p.\ 264--265]{young}). This integral satisfies additive property by the construction, and the so-called Young-Loeve estimate \cite[Theorem 6.8, p.\ 116]{friz}
\begin{eqnarray}\label{YL0}
\Big\|\int_s^t y_u d x_u-y_s x_{s,t}\Big\| &\leq& K(p,q) \ltn y\rtn_{q\text{-var},[s,t]} \ltn x\rtn_{p\text{-var},[s,t]} \notag\\
&\leq& K(p,q) |t-s|^{\frac{1}{p}+\frac{1}{q}} \ltn y \rtn_{\frac{1}{p},[s,t]} \ltn x \rtn_{\frac{1}{q}{\rm -Hol},[s,t]}, 
\end{eqnarray}
for all $[s,t]\subset I$, where 
\begin{equation}\label{constK}
K(p,q):=(1-2^{1-\frac{1}{p} - \frac{1}{q}})^{-1}.
\end{equation}

\begin{theorem}[Existence, uniqueness and integrability of the solution]
	Under assumptions (${\textbf H}_1$), (${\textbf H}_2$), there exists a unique solution of equation \eqref{RDE1} on any interval $[a,b]$. Moreover $\ltn y \rtn_{q-{\rm var},[a,b]}$ is integrable.
\end{theorem}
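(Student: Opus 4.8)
\noindent\textit{Proof sketch.} The plan is threefold: establish local existence and uniqueness by a contraction argument built on the Young--Loeve estimate \eqref{YL0}, patch the local solutions into a global one, and then extract an a priori bound on $\ltn y\rtn_{q\text{-var},[a,b]}$ whose dependence on the driver is integrable against the Gaussian law.

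First I would fix conjugate exponents with $\tfrac1p+\tfrac1q>1$ ($p$ coming from the almost sure regularity of $x$, $q$ the variation index in the statement) and, on a subinterval $J=[\alpha,\alpha+\Delta]$, study the Picard map $\cM$ sending a path $z$ with $z_\alpha=y_\alpha$ to $t\mapsto y_\alpha+\int_\alpha^t[Az_u+f(z_u)]\,du+\int_\alpha^t g(z_u)\,dx_u$. Using \eqref{YL0}, the global Lipschitz bounds on $A$ and $f$ from $(\textbf{H}_1)$, and the $C^{1+\mathrm{Lip}}$ bounds on $g$ from $(\textbf{H}_2)$, one checks that $\cM$ stabilises a closed ball of $\cC^{q\text{-var}}(J,\R^d)$ and is a contraction there as soon as $\Delta$ is small in a way that depends only on the structural constants and on $\ltn x\rtn_{p\text{-var},J}$, but \emph{not} on $\|y_\alpha\|$. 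This produces a unique local solution, and global uniqueness on $[a,b]$ follows by the usual open--closed argument.

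Because the length of the contraction interval is governed solely by the local $p$-variation of $x$, I would next introduce the greedy sequence $\tau_0=a$, $\tau_{i+1}=\inf\{t>\tau_i:\ltn x\rtn_{p\text{-var},[\tau_i,t]}^p\ge\gamma\}$ for a fixed threshold $\gamma$, and let $N=N_{[a,b]}(x)$ count the intervals needed to cover $[a,b]$. Superadditivity of the control $(s,t)\mapsto\ltn x\rtn_{p\text{-var},[s,t]}^p$ forces $N\le 1+\gamma^{-1}\ltn x\rtn_{p\text{-var},[a,b]}^p$. On each $[\tau_i,\tau_{i+1}]$ the fixed point argument applies with a uniform constant, so the local solutions concatenate into a global solution and give bounds $\ltn y\rtn_{q\text{-var},[\tau_i,\tau_{i+1}]}\le C(1+\|y_{\tau_i}\|)$ and $\|y_{\tau_{i+1}}\|\le\kappa(1+\|y_{\tau_i}\|)$ with $C,\kappa$ absolute. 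Telescoping over the $N$ intervals yields
\[
\ltn y\rtn_{q\text{-var},[a,b]}\ \le\ C\,\kappa^{\,N}\,(1+\|y_a\|).
\]

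For the integrability claim I would use the Gaussian hypothesis \eqref{Gaussianexpect}: by a Garsia--Rodemich--Rumsey/Kolmogorov argument $x=X(\omega)$ lies in $C^{\nu'}([a,b])$ for every $\nu'<\bar\nu$, and --- viewing $X$ as a Gaussian element of a path space and applying Fernique's theorem to the measurable seminorm $\ltn\cdot\rtn_{\nu'\text{-Hol},[a,b]}$ --- this seminorm, hence also $\ltn X\rtn_{p\text{-var},[a,b]}\le(b-a)^{\nu'}\ltn X\rtn_{\nu'\text{-Hol},[a,b]}$ with $p=1/\nu'$, has Gaussian tails. Since $\nu'>\tfrac12$ gives $p<2$, this yields $\E\,\kappa^{\,N}\le\E\exp\!\big(c\,\ltn X\rtn_{p\text{-var},[a,b]}^p\big)<\infty$, and, with $\kappa^{\,mN}$ in place of $\kappa^{\,N}$, $\E\,\ltn y\rtn_{q\text{-var},[a,b]}^m<\infty$ for every $m$. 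The main obstacle is the second step: one must ensure that the per-interval estimates carry \emph{universal} constants and that the growth across $[a,b]$ is at most exponential in $N$ --- anything heavier, such as $\exp(\ltn x\rtn^{2})$, would destroy Gaussian integrability --- which is exactly where the linear growth of $g$ and the bound $p<2$ are used. The contraction estimates themselves and the passage from \eqref{Gaussianexpect} to H\"older regularity with Gaussian tails are routine.
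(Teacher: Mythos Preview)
Your proposal is correct and follows essentially the same route as the paper, which simply cites \cite{congduchong17} for existence/uniqueness and the exponential a priori bound $\ltn y\rtn_{q\text{-var},[a,b]}\le\|y_a\|\exp\{c[(b-a)^p+\ltn x\rtn_{p\text{-var},[a,b]}^p]\}$, and then \cite{nualart,Hu} (with \cite{cassetal} mentioned as an alternative) for the Gaussian integrability of this bound --- precisely the greedy-time concatenation and Fernique-type argument you sketch directly. Your explicit appeal to Fernique to control $\E\exp(c\ltn x\rtn_{p\text{-var}}^p)$ for $p<2$ is in fact slightly more careful than the paper's phrasing via moments of $e^{\ltn x\rtn_{p\text{-var}}}$.
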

\begin{proof}
	Since $\nu >\frac{1}{2}$, \eqref{RDE1} is a Young equation which satisfies the assumptions of Theorem 3.6 and Theorem 4.4 in \cite{congduchong17} on the existence and uniqueness of solution for \eqref{RDE1} and its backward equation. Moreover to estimate $\ltn x \rtn_{q-{\rm var},[a,b]}$, we apply \cite[Lemma 3.3]{congduchong17} to conclude that there exists a function 
	\[
	F(\ltn x \rtn_{p-{\rm var},[a,b]}) = 4^p(\log 2) \max\{\|A\| + C_f, (K+1)C_g \}  \Big[(b-a)^p + \ltn x \rtn_{p-{\rm var},[a,b]}^p\Big]
	\]
	such that
	\begin{eqnarray}\label{xqvar} 
	\ltn y \rtn_{q-{\rm var},[a,b]} &\leq& \|y_a\| \exp \Big\{F(\ltn x \rtn_{p-{\rm var},[a,b]}) \Big\} \notag\\
	\|y\|_{\infty,[a,b]} \leq\ltn y \rtn_{q-{\rm var},[a,b]} + \|y_a\|&\leq& \|y_a\| \Big(1+ \exp \Big\{F(\ltn x \rtn_{p-{\rm var},[a,b]}) \Big\} \Big). 
	\end{eqnarray}
	From \cite{nualart} (see also \cite[Proposition 2.1,p.18]{Hu}) the random variable $Z:= e^{ \ltn x\rtn_{p-{\rm var},[0,1]}}$, with $1<p<2$, has finite moments of any order, provided that $x$ is a realization of Gaussian stochastic process. That proves the integrability of $\ltn y \rtn_{q-{\rm var},[a,b]}$ and $	\|y\|_{\infty,[a,b]} $. Notice that the integrability of $\ltn y \rtn_{q-{\rm var},[a,b]}$ and $	\|y\|_{\infty,[a,b]} $ can also be proved using \cite[Theorem 6.3]{cassetal} with better estimates. 
\end{proof}

\subsection{$\nu \in (\frac{1}{3},\frac{1}{2})$: controlled differential equations}

We also introduce the construction of the integral using rough paths for the case $y,x \in C^\alpha(I)$ when $\alpha\in(\frac{1}{3},\nu)$. To do that, we need to introduce the concept of rough paths. Following \cite{frizhairer}, a couple $\bx=(x,\X)$, with $x \in C^\alpha(I,\R^m)$ and $\X \in C^{2\alpha}_2(\Delta^2(I),\R^m \otimes \R^m):= \{\X: \sup_{s<t} \frac{\|\X_{s,t}\|}{|t-s|^{2\alpha}} < \infty \}$ where the tensor product $\R^m \otimes \R^n$ can be indentified with the matrix space $\R^{m\times n}$, is called a {\it rough path} if they satisfies Chen's relation
\begin{equation}\label{chen}
\X_{s,t} - \X_{s,u} - \X_{u,t} = x_{u,t} \otimes x_{s,u},\qquad \forall \min{I} \leq s \leq u \leq t \leq \max{I}. 
\end{equation}
$\X$ is viewed as {\it postulating} the value of the quantity $\int_s^t x_{s,r} \otimes dx_r := \X_{s,t}$ where the right hand side is taken as a definition for the left hand side. Denote by $\cC^\alpha(I) \subset C^\alpha \oplus C^{2\alpha}_2$ the set of all rough paths in $I$, then $\cC^\alpha$ is a closed set but not a linear space, equipped with the rough path semi-norm 
\begin{equation}\label{translated}
\ltn \bx \rtn_{\alpha,I} := \ltn x \rtn_{\alpha,I} + \ltn \X \rtn_{2\alpha,\Delta^2(I)}^{\frac{1}{2}} < \infty.  
\end{equation}
Let $3>p> 2, \nu \geq \frac{1}{p}$. Throughout this paper, we will assume that  $x(\omega): I \to \R^m$ and $\X(\omega): I \times I \to \R^m \otimes \R^m$ are random funtions that satisfy Chen's relation relation \eqref{chen} and
\begin{equation}\label{expect}
\Big(E \|x_{s,t} \|^p\Big)^{\frac{1}{p}} \leq C |t-s|^\nu, \quad \text{and} \quad 	\Big(E \|\X_{s,t}\|^{\frac{p}{2}}\Big)^{\frac{2}{p}} \leq C |t-s|^{2\nu},\forall s,t \in I
\end{equation}
for some constant $C$. Then, due to the Kolmogorov criterion for rough paths \cite[Appendix A.3]{friz} for all $\alpha \in (\frac{1}{3},\nu)$ there is a version of $\omega-$wise $(x,\X)$ and random variables $K_\alpha \in L^p, \K_\alpha \in L^{\frac{p}{2}}$, such that, $\omega-$wise speaking, for all $s,t \in I$,
\[
\|x_{s,t}\| \leq K_\alpha |t-s|^\alpha, \quad \|\X_{s,t}\| \leq \K_\alpha |t-s|^{2\alpha}.
\] 
In particular, if $\beta -\frac{1}{q} > \frac{1}{3}$ then, for every $\alpha \in (\frac{1}{3},\beta-\frac{1}{q})$ we have $(x,\X) \in \cC^\alpha.$ Moreover, we could choose $\alpha$ abit smaller such that $x \in C^{0,\alpha}(I):= \{x \in C^\alpha: \lim \limits_{\delta \to 0}\sup_{0<t-s <\delta} \frac{\|x_{s,t}\|}{|t-s|^\alpha} = 0\}$ and $\X \in C^{0,2\alpha}(\Delta^2(I)):= \{ \X \in C^{2\alpha}(\Delta^2(I)): \lim \limits_{\delta \to 0}\sup_{0<t-s <\delta} \frac{\|\X_{s,t}\|}{|t-s|^{2\alpha}} = 0  \}$, then $\cC^{\alpha}(I) \subset C^{0,\alpha}(I) \oplus C^{0,2\alpha}(\Delta^2(I))$ is separable due to the separability of $C^{0,\alpha}(I)$ and $C^{0,2\alpha}(\Delta^2(I))$.

\subsubsection{Controlled rough paths}

A path $y \in C^\alpha(I,\cL(\R^m,\R^d))$ is then called to be {\it controlled by} $x \in C^\alpha(I,\R^m)$ if there exists a tube $(y^\prime,R^y)$ with $y^\prime \in C^\alpha(I,\cL(\R^m,\cL(\R^m,\R^d))), R^y \in C^{2\alpha}(\Delta^2(I),\cL(\R^m,\R^d))$ such that
\[
y_{s,t} = y^\prime_s x_{s,t} + R^y_{s,t},\qquad \forall \min{I}\leq s \leq t \leq \max{I}.
\]
$y^\prime$ is called Gubinelli derivative of $y$, which is uniquely defined as long as $x \in C^\alpha\setminus C^{2\alpha}$ (see \cite[Proposition 6.4]{frizhairer}). The space $\cD^{2\alpha}_x(I)$ of all the couple $(y,y^\prime)$ that is controlled by $x$ will be a Banach space equipped with the norm
\begin{eqnarray*}
	\|y,y^\prime\|_{x,2\alpha,I} &:=& \|y_{\min{I}}\| + \|y^\prime_{\min{I}}\| + \ltn y,y^\prime \rtn_{x,2\alpha,I},\qquad \text{where} \\
	\ltn y,y^\prime \rtn_{x,2\alpha,I} &:=& \ltn y^\prime \rtn_{\alpha,I} +   \ltn R^y\rtn_{2\alpha,I},
\end{eqnarray*}
where we omit the value space for simplicity of presentation. Now fix a rough path $(x,\W)$, then for any $(y,y^\prime) \in \cD^{2\alpha}_x (I)$, it can be proved that the function $F \in C^\alpha(\Delta^2 (I),\R^d)$ defined by 
\[
F_{s,t} := y_s x_{s,t} + y^\prime_s \X_{s,t}
\] 
belongs to the space 
\begin{eqnarray*}
	C^{\alpha, 3\alpha}_2(I) &:=& \Big \{ F\in C^\alpha(\Delta^2(I)): F_{t,t} =0 \quad \text{and}\\ && \quad \qquad \qquad \qquad \qquad \ltn \delta F \rtn_{3\alpha,I} := \sup_{\min{I} \leq s \leq u \leq t \leq \max{I}} \frac{\|F_{s,t} - F_{s,u}-F_{u,t}\|}{|t-s|^{3\alpha}} < \infty \Big\}.
\end{eqnarray*}
Thanks to the sewing lemma \cite[Lemma 4.2]{frizhairer}, the integral $\int_s^t y_u dx_u$ can be defined as  
\[
\int_s^t y_u dx_u := \lim \limits_{|\Pi| \to 0} \sum_{[u,v] \in \Pi} [ y_{u}x_{u,v} + y^\prime_u \X_{u,v} ]
\]
where the limit is taken on all the finite partition $\Pi$ of $I$ with $|\Pi| := \displaystyle\max_{[u,v]\in \Pi} |v-u|$ (see \cite{gubinelli}). Moreover, there exists a constant $C_\alpha = C_{\alpha,|I|} >1$ with $|I| := \max{I} - \min{I}$, such that
\begin{equation}\label{roughEst}
\Big\|\int_s^t y_u dx_u - y_s x_{s,t} + y^\prime_s \X_{s,t}\Big\| \leq C_\alpha |t-s|^{3\alpha} \Big(\ltn x \rtn_{\alpha,[s,t]} \ltn R^y \rtn_{2\alpha,\Delta^2[s,t]} + \ltn y^\prime\rtn_{\alpha,[s,t]} \ltn \X \rtn_{2\alpha,\Delta^2[s,t]}\Big).
\end{equation}
From now on, if no other emphasis, we will simply write $\ltn x \rtn_{\alpha}$ or $\ltn \X \rtn_{2\alpha}$ without addressing the domain in $I$ or $\Delta^2(I)$. In particular, for any $f \in C^2_b(\R^d,\R^d)$, then $f(x) \in \cD^{2\alpha}_x$ with $f(x)^\prime = \nabla f(x)$ and 
\[
\ltn f(x), \nabla f(x) \rtn_{x, 2\alpha} \leq \|\nabla^2\|_\infty \big(\ltn x \rtn_\alpha + \frac{1}{2}\ltn x \rtn_\alpha^2 \big).
\]
In that case \eqref{roughEst} becomes
\[
\Big|\int_s^t f(x_u) dx_u - f(x_s)x_{s,t} + \nabla f(x_s) \X_{s,t}\Big| \leq C |t-s|^{3\alpha} \|f\|_{C^2_b} \Big( \ltn x \rtn_\alpha^3+ \ltn x\rtn_\alpha \ltn \X \rtn_{2\alpha}\Big).
\]
Moreover, in case $f \in C^3_b$ then we get the formula for integration by composition
\[
f(x_t) = f(x_s) + \int_s^t \nabla f(x_u) dx_u + \frac{1}{2} \int_s^t \nabla^2f(x_u) d[x]_{s,u},
\]
where the last integral is understood in the Young sense and $[x]_{s,t}:= x_{s,t} \otimes x_{s,t}  - 2 \text{\ Sym\ } (\X_{s,t}) \in C^{2\alpha}$. Notice that for geometric rough path $\X_{s,t} = \int_s^t x_{s,r} \otimes dx_r$, then $\text{\ Sym\ } (\X_{s,t}) = \frac{1}{2} x_{s,t} \otimes x_{s,t}$, thus $[x]_{s,t} \equiv 0.$

\begin{lemma}[Change of variables formula]
	Assume that $\alpha > \frac{1}{3}$, $V \in C^3_b(\R^d,\R)$ and $y \in C^{\alpha}(I,\R) $ is a solution of the rough differential equation
	\begin{equation}\label{roughde1}
	y_t = y_s + \int_s^t f(y_u)du + \int_s^t g(y_u)dx_u,\quad \forall \min{I} \leq s \leq t \leq \max{I}.
	\end{equation}
	Then one get the change of variable formula
	\begin{eqnarray}\label{Roughformula}
	V(y_t) &=& V(y_s) + \int_s^t \langle D_y V(y_u), f(y_u)\rangle  du + \int_s^t \langle D_y V(y_u) g(y_u) \rangle d x_u \notag \\
	&& + \frac{1}{2} \int_s^t D_{yy}V(y_u) [g(y_u),g(y_u)] d[x]_{s,u},  
	\end{eqnarray}
	where
	\[
	[ D_y V(y) g(y)]^\prime_s = \langle D_y V(y_s), D_y g(y_s)g(y_s)\rangle + D_{yy}V(y_s)[g(y_s),g(y_s)].
	\]
\end{lemma}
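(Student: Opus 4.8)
The plan is to carry out the classical Riemann-sum proof of the change-of-variables formula, organised so that the Gubinelli-derivative bookkeeping is explicit. First I would record that the solution $y$ of \eqref{roughde1} is itself a controlled rough path: from \eqref{roughde1},
\[
y_{s,t} = g(y_s)\, x_{s,t} + R^y_{s,t},\qquad R^y_{s,t} = \int_s^t f(y_u)\,du + \Big(\int_s^t g(y_u)\,dx_u - g(y_s)\,x_{s,t}\Big),
\]
and $\ltn R^y\rtn_{2\alpha}<\infty$ because the drift contribution is $O(|t-s|)$ while the bracketed term is $O(|t-s|^{3\alpha})$ by \eqref{roughEst} applied to the controlled path $\big(g(y),\,D_yg(y)g(y)\big)$; hence $(y,g(y))\in\cD^{2\alpha}_x(I)$. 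Since $V\in C^3_b$ (so $D_yV\in C^2_b$) and $g\in C^3_b\subset C^2_b$, expanding $D_yV(y_t)g(y_t)-D_yV(y_s)g(y_s)$ with the help of $y_{s,t}=g(y_s)x_{s,t}+O(|t-s|^{2\alpha})$ shows that $P_s:=D_yV(y_s)g(y_s)$ lies in $\cD^{2\alpha}_x(I)$ with Gubinelli derivative
\[
P'_s=\langle D_yV(y_s),\,D_yg(y_s)g(y_s)\rangle + D_{yy}V(y_s)[g(y_s),g(y_s)],
\]
which is exactly the expression in the statement. Consequently all three integrals on the right of \eqref{Roughformula} are meaningful: the first as a Riemann integral; the second as the rough integral $\lim_{|\Pi|\to0}\sum_i\big(P_{t_i}x_{t_i,t_{i+1}}+P'_{t_i}\X_{t_i,t_{i+1}}\big)$; and the third as a Young integral, since $[x]\in C^{2\alpha}$, the integrand $u\mapsto D_{yy}V(y_u)[g(y_u),g(y_u)]$ is $\alpha$-Hölder, and $\alpha+2\alpha>1$.

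Next I would fix a partition $\Pi=\{\min I\le s=t_0<\dots<t_n=t\le\max I\}$ with mesh $|\Pi|$, telescope $V(y_t)-V(y_s)=\sum_i\big(V(y_{t_{i+1}})-V(y_{t_i})\big)$, and Taylor-expand to second order with remainder:
\[
V(y_{t_{i+1}})-V(y_{t_i})=\langle D_yV(y_{t_i}),y_{t_i,t_{i+1}}\rangle+\tfrac12 D_{yy}V(y_{t_i})[y_{t_i,t_{i+1}},y_{t_i,t_{i+1}}]+r_i,
\]
with $|r_i|\le\tfrac16\|D^3V\|_\infty\|y_{t_i,t_{i+1}}\|^3=O(|t_{i+1}-t_i|^{3\alpha})$, so that $\sum_i r_i=O(|\Pi|^{3\alpha-1})\to0$. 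Substituting into the first-order term the expansion $y_{t_i,t_{i+1}}=f(y_{t_i})(t_{i+1}-t_i)+g(y_{t_i})x_{t_i,t_{i+1}}+D_yg(y_{t_i})g(y_{t_i})\X_{t_i,t_{i+1}}+\rho_i$, where $\|\rho_i\|=O(|t_{i+1}-t_i|^{3\alpha})+O(|t_{i+1}-t_i|^{1+\alpha})$ (the second coming from $\|f(y_u)-f(y_{t_i})\|\lesssim|u-t_i|^{\alpha}$), produces
\[
\langle D_yV(y_{t_i}),y_{t_i,t_{i+1}}\rangle=\langle D_yV(y_{t_i}),f(y_{t_i})\rangle(t_{i+1}-t_i)+D_yV(y_{t_i})g(y_{t_i})\,x_{t_i,t_{i+1}}+\langle D_yV(y_{t_i}),D_yg(y_{t_i})g(y_{t_i})\rangle\X_{t_i,t_{i+1}}+\varepsilon_i,
\]
with $\sum_i|\varepsilon_i|\to0$. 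For the second-order term, replacing $y_{t_i,t_{i+1}}$ by its leading part $g(y_{t_i})x_{t_i,t_{i+1}}$ costs only $O(|t_{i+1}-t_i|^{3\alpha})$ after contraction against the bounded Hessian, and using $x_{s,t}\otimes x_{s,t}=[x]_{s,t}+2\,\mathrm{Sym}(\X_{s,t})$ together with the symmetry of $D_{yy}V$ gives
\[
\tfrac12 D_{yy}V(y_{t_i})[y_{t_i,t_{i+1}},y_{t_i,t_{i+1}}]=\tfrac12 D_{yy}V(y_{t_i})[g(y_{t_i}),g(y_{t_i})]\,[x]_{t_i,t_{i+1}}+D_{yy}V(y_{t_i})[g(y_{t_i}),g(y_{t_i})]\,\X_{t_i,t_{i+1}}+O(|t_{i+1}-t_i|^{3\alpha}).
\]

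Collecting terms, the two $\X$-contractions above add up precisely to $P'_{t_i}\X_{t_i,t_{i+1}}$, so $\sum_i\big(D_yV(y_{t_i})g(y_{t_i})x_{t_i,t_{i+1}}+P'_{t_i}\X_{t_i,t_{i+1}}\big)\to\int_s^t\langle D_yV(y_u)g(y_u)\rangle\,dx_u$ by the definition of the rough integral; $\sum_i\langle D_yV(y_{t_i}),f(y_{t_i})\rangle(t_{i+1}-t_i)\to\int_s^t\langle D_yV(y_u),f(y_u)\rangle\,du$ as a Riemann sum; and $\sum_i\tfrac12 D_{yy}V(y_{t_i})[g(y_{t_i}),g(y_{t_i})][x]_{t_i,t_{i+1}}\to\tfrac12\int_s^t D_{yy}V(y_u)[g(y_u),g(y_u)]\,d[x]_{s,u}$ since Young sums of an $\alpha$-Hölder integrand against the $2\alpha$-Hölder control $[x]$ converge. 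As every remainder sum is $o(1)$, letting $|\Pi|\to0$ yields \eqref{Roughformula}. The hard part is the bookkeeping rather than any individual estimate: one must confirm that $P=D_yV(y)g(y)$ really belongs to $\cD^{2\alpha}_x$ with the stated derivative, and then verify the algebraic identity by which the $\X$-contribution from the Hessian term together with the $\X$-contribution from the gradient term — each of order $|t_{i+1}-t_i|^{2\alpha}$, hence individually non-negligible — recombine into $P'$; the remaining work is just to keep the various $O(|t_{i+1}-t_i|^{3\alpha})$ and $O(|t_{i+1}-t_i|^{1+\alpha})$ errors uniformly controlled along the partition.
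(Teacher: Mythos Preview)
Your proposal is correct and follows essentially the same route as the paper: Taylor-expand $V$ to second order, substitute the local expansion $y_{s,t}=f(y_s)(t-s)+g(y_s)x_{s,t}+D_yg(y_s)g(y_s)\X_{s,t}+O(|t-s|^{3\alpha})$, use $x_{s,t}\otimes x_{s,t}=[x]_{s,t}+2\,\mathrm{Sym}(\X_{s,t})$ together with the symmetry of the Hessian, and recognise that the two $\X$-contributions combine into $P'_s\X_{s,t}$. The only difference is packaging: the paper carries this out on a single interval $[s,t]$, obtains the ``discretised'' identity up to $O(|t-s|^{3\alpha})$, and then invokes the sewing lemma, whereas you sum explicitly over a partition and let the mesh go to zero; these are equivalent presentations of the same argument, and your version has the minor advantage of spelling out why the right-hand integrals are well defined before identifying them.
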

\begin{proof}
	Using the Taylor expansion, it is easy to see that
	\[
	V(y_t) = V(y_s) + \langle D_y V(y_s),y_{s,t}\rangle + \frac{1}{2} D_{yy} V(y_s)[y_{s,t},y_{s,t}] +  O(|t-s|^{3\alpha}).
	\]
	On the other hand, it follows from \eqref{roughde1} and \eqref{roughEst} that
	\begin{eqnarray*}
		y_{s,t} &=& f(y_s)(t-s) + g(y_s) x_{s,t} + [g(y)]^\prime_s \X_{s,t} + O(|t-s|^{3\alpha}) \\
		&=& f(y_s)(t-s) + g(y_s) x_{s,t} + D_y g(y_s) g(y_s) \X_{s,t} + O(|t-s|^{3\alpha}).
	\end{eqnarray*}
	As the result,
	\begin{eqnarray*}
		V(y)_{s,t}&=& \langle D_y V(y_s), f(y_s) \rangle (t-s) + \langle D_y V(y_s), g(y_s) \rangle x_{s,t} + D_y V(y_s) D_y g(y_s)g(y_s) \X_{s,t} \\
		&&+ \frac{1}{2} D_{yy}V(y_s)[g(y_s),g(y_s)] x_{s,t} \otimes x_{s,t} + O(|t-s|^{3\alpha}) \\
		&=&\langle D_y V(y_s), f(y_s)\rangle (t-s) + \langle D_y V(y_s), g(y_s) \rangle x_{s,t}+ \frac{1}{2} D_{yy}V(y_s)[g(y_s),g(y_s)] [x]_{s,t}  \\
		&&+ \Big(D_y V(y_s) D_y g(y_s)g(y_s) + D_{yy}V(y_s)[g(y_s),g(y_s)]\Big) \X_{s,t} + O(|t-s|^{3\alpha}) \\
		&=& \langle D_y V(y_s), f(y_s)\rangle (t-s) + \langle D_y V(y_s), g(y_s) \rangle x_{s,t} + [ D_y V(y) g(y)]^\prime_s \X_{s,t} \\
		&&+ \frac{1}{2} D_{yy}V(y_s)[g(y_s),g(y_s)] [x]_{s,t} + O(|t-s|^{3\alpha}),
	\end{eqnarray*}
	which is the discretization version of \eqref{Roughformula}. The conclusion is then a direct consequence of the sewing lemma.\end{proof}

\subsubsection{Greedy times}
For any $\nu \in (\frac{1}{3},\frac{1}{2})$ and on each compact interval $I$ such that $|I|=\max{I} - \min{I} =1$, consider a rough path $\bx = (x,\X) \in C^\nu(I)$ with H\"older norm. Then given $\alpha \in (\frac{1}{3},\nu)$, we construct for any fixed $\gamma \in (0,1)$ the sequence of greedy times $\{\tau_i(\gamma,I,\alpha)\}_{i \in \N}$ w.r.t. H\"older norms 
\begin{equation}\label{greedytime}
\tau_0 = \min{I},\quad \tau_{i+1}:= \inf\Big\{t>\tau_i:  \ltn \bx \rtn_{\alpha, [\tau_i,t]} = \gamma \Big\}\wedge \max{I}.
\end{equation}
Denote by $N_{\gamma,I,\alpha}(\bx):=\sup \{i \in \N: \tau_i \leq \max{I}\}$. From the definition \eqref{greedytime}, it follows that 
\[
\gamma < (\tau_{i+1} - \tau_i)^{\nu-\alpha} \Big(\ltn x\rtn_{\nu,I} + \ltn \X \rtn_{2\nu, \Delta^2(I)}^{\frac{1}{2}}\Big), 
\]
which implies that 
\begin{equation*}
	|I| \geq\tau_{N_{\gamma,I,\alpha}(\bx)} - \min{I} = \sum_{i=0}^{N_{\gamma,I,\alpha}(\bx)-1} (\tau_{i+1} - \tau_i) \geq N_{\gamma,I,\alpha}(\bx)\gamma^{\frac{1}{\nu-\alpha}}\Big(\ltn x\rtn_{\nu,I} + \ltn \X \rtn_{2\nu,\Delta^2(I)}^{\frac{1}{2}}\Big)^{\frac{-1}{\nu-\alpha}}.
\end{equation*}
This proves that
\begin{equation}
N_{\gamma,I,\alpha}(\bx) \leq |I|\gamma^{\frac{-1}{\nu-\alpha}}\Big(\ltn x\rtn_{\nu,I} +\ltn \X \rtn_{2\nu,\Delta^2(I)}^{\frac{1}{2}}\Big)^{\frac{1}{\nu-\alpha}}.
\end{equation}
Also, we construct another sequence of greedy time $\{\bar{\tau}_i(\gamma,I,\alpha)\}_{i \in \N}$ given by
\begin{equation}\label{greedywitht}
\bar{\tau}_0 = \min{I},\quad \bar{\tau}_{i+1}:= \inf\Big\{t>\bar{\tau}_i:  (t-\bar{\tau}_i)^{1-2\alpha} + \ltn \bx \rtn_{\alpha, [\bar{\tau}_i,t]} = \gamma \Big\}\wedge \max{I},
\end{equation}
and denote by $\bar{N}_{\gamma,I,\alpha}(\bx):=\sup \{i \in \N: \bar{\tau}_i \leq \max{I}\}$. Then on any interval $J$ such that $|J| = \Big(\frac{\gamma}{2}\Big)^{\frac{1}{1-2\alpha}}$ and with the sequence $\{\tau_i(\frac{\gamma}{2},J,\alpha)\}_{i\in\N}$ it follows that
\[
(\tau_{i+1} - \tau_i)^{1-2\alpha} + \ltn \bx \rtn_{\alpha, [\tau_i,\tau_{i+1}]} \leq \frac{\gamma}{2} + \frac{\gamma}{2} = \gamma, 
\]
hence there is a most one greedy time of the sequence $\bar{\tau}_i$ lying in each interval $[\tau_i(\frac{\gamma}{2},J,\alpha), \tau_{i+1}(\frac{\gamma}{2},J,\alpha)]$. That being said, if we divide $I$ into sub-interval $J_k$ of length $|J_k| \equiv |J|= \Big(\frac{\gamma}{2}\Big)^{\frac{1}{1-2\alpha}}$, then it follows that
\begin{equation}
\bar{N}_{\gamma,I,\alpha}(\bx) \leq \sum_{k=1}^{m} N_{\frac{\gamma}{2},J_k,\alpha}(\bx),\quad m := \Big\lceil \frac{|I|}{|J|} \Big\rceil.
\end{equation}

\begin{theorem}[Existence and uniqueness of the solution]
	Assume that $G(y)= Cy$, there exists a unique solution of equation \eqref{RDE1} and also of the backward equation on any interval $[a,b]$. 
\end{theorem}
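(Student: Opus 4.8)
The plan is to run the classical Picard iteration in the space $\cD^{2\alpha}_x$ of controlled rough paths, using crucially that the diffusion field $y\mapsto Cy$ is linear — hence globally Lipschitz and of linear growth — so that local solutions extend to a global one without explosion. Fix $\alpha\in(\tfrac13,\nu)$ and the rough path $\bx=(x,\X)$. On a short interval $I=[a,a+\Delta]$ define $\cM:(y,y')\mapsto(z,z')$ by
\[
z_t := y_a + \int_a^t[Ay_u+f(y_u)]\,du + \int_a^t Cy_u\,dx_u,\qquad z'_t := Cy_t .
\]
Here the second integral is the Gubinelli integral of the controlled rough path $(Cy,(Cy)')=(Cy,Cy')$, which is controlled by $x$ precisely because $C$ is linear (with remainder $CR^y$), and the drift $t\mapsto\int_a^t[Ay_u+f(y_u)]\,du$ is Lipschitz in $t$ by $\|f(y)\|\le C_f\|y\|$, so it enters the new remainder $R^z$ only at order $|t-s|\le|I|^{1-2\alpha}|t-s|^{2\alpha}$. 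Plugging \eqref{roughEst} into the bookkeeping for $\|z,z'\|_{x,2\alpha,I}$, one checks that $\cM$ maps the ball $\{\|y,y'\|_{x,2\alpha,I}\le M\}$ into itself for a suitable $M=M(\|y_a\|,\ltn\bx\rtn_{\alpha,I})$ once $\Delta$ is small, and that on this ball
\[
\ltn\cM(y,y')-\cM(\ti y,\ti y')\rtn_{x,2\alpha,I}\le C_1\big(\Delta,\ltn\bx\rtn_{\alpha,I}\big)\,\ltn y-\ti y,\, y'-\ti y'\rtn_{x,2\alpha,I},
\]
with $C_1\to0$ as $\Delta\to0$, since linearity of $A,C$ and the Lipschitz bound on $f$ make every term of the difference carry a positive power of $\Delta$. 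Banach's fixed point theorem then produces a unique $(y,y')\in\cD^{2\alpha}_x(I)$ solving \eqref{RDE1} on $I$, necessarily with $y'=Cy$.

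The admissible step $\Delta$ for which $C_1<\tfrac12$ depends only on the local quantity $\Delta^{1-2\alpha}+\ltn\bx\rtn_{\alpha,[a,a+\Delta]}$, so I would fix $\gamma\in(0,1)$ small enough and use the greedy sequence $\{\bar\tau_i(\gamma,[a,b],\alpha)\}_{i\in\N}$ of \eqref{greedywitht}, on each step of which the local result applies (the constant $C_\alpha=C_{\alpha,|I|}$ in \eqref{roughEst} being uniformly bounded because the steps are shorter than a fixed length). Since $\bar N_{\gamma,[a,b],\alpha}(\bx)<\infty$ there are only finitely many steps, and concatenating the local solutions yields a solution on all of $[a,b]$. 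Linearity of $Cy$ and the global Lipschitz bound on $f$ give, by a Gronwall-type estimate over each greedy step, $\|y\|_{\infty,[\bar\tau_i,\bar\tau_{i+1}]}\le\kappa(\gamma)\,\|y_{\bar\tau_i}\|$ with $\kappa$ independent of $i$, hence $\|y\|_{\infty,[a,b]}\le\kappa^{\bar N_{\gamma,[a,b],\alpha}(\bx)}\|y_a\|<\infty$; this rules out explosion and makes the concatenation legitimate. Uniqueness on $[a,b]$ then follows from local uniqueness by a connectedness argument on the closed set of times up to which two solutions agree.

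For the backward equation I would apply the same scheme to the canonical time reversal $\hat\bx=(\hat x,\hat\X)$ of $\bx$ on $[a,b]$ (with $\hat x_t=x_{a+b-t}$ and $\hat\X$ determined by Chen's relation \eqref{chen}, so that $\hat\bx\in\cC^\alpha([a,b])$); then $\hat y_t:=y_{a+b-t}$ solves the forward rough differential equation driven by $\hat\bx$ with drift $-[A\,\cdot\,+f(\cdot)]$ and diffusion $-C\,\cdot$, and the forward statement applies verbatim. \textbf{Main obstacle.} The delicate part is the global step: making the contraction radius uniform across greedy steps and propagating the a priori bound multiplicatively with an $i$-independent constant, so that $\bar N_{\gamma,[a,b],\alpha}(\bx)$ alone governs the whole interval. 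Closely related is the careful accounting of the remainder $R^z$ in the controlled norm — in particular isolating the genuinely rough contribution from the smooth drift contribution — together with verifying the Hölder bound and Chen's relation for $\hat\X$ in the time-reversal step.
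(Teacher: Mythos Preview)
Your proposal is correct and follows essentially the same route as the paper: set up the Gubinelli map $\cM(y,y')=(G(y,y'),Cy)$ on $\cD^{2\alpha}_x$, show it is an invariant contraction on a small interval determined by the greedy condition \eqref{greedywitht}, and then concatenate over finitely many greedy steps. The only cosmetic differences are that the paper invokes Schauder--Tychonoff for existence and then proves contraction separately for uniqueness (whereas you go straight to Banach's fixed point), and that the paper dispatches the backward equation with a one-line ``similar conclusion'' while you spell out the time-reversal $\hat\bx$ explicitly.
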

\begin{proof}	
	To make our presentation self contained, we give a direct proof here for the rough differential equation
	\[
	dy = [A y_t + f(y_t)]dt + C y_t dx_t = F(y_t) dt + C y_t dx_t,
	\]
	or in the integral form
	\begin{equation}\label{linRDE3}
	y_t = G(y,y^\prime)_t = y_a + \int_a^t F(y_u) du + \int_a^t C y_u dx_u,\qquad t\in [a,T],
	\end{equation}
	where $F(0)$ is globally Lipschitz continuous with Lipschitz coefficient $L_f = \|A\| + C_f$. Denote by $\cD^{2\alpha}_x(y_a,C y_a)$  the set of paths $(y,y^\prime)$ controlled by $x$ in $[a,T]$ with $y_a$ and $y^\prime_a = Cy_a$ fixed. Consider the mapping defined by
	\[
	\cM: \cD^{2\alpha}_x(y_a,C y_a) \to \cD^{2\alpha}_x(y_a,Cy_a),\qquad \cM (y,y^\prime)_t := (G(y,y^\prime)_t, Cy_t). 
	\]
	Then similar to \cite{gubinelli} we are going to estimate $\ltn \cM (y,y^\prime)\rtn_{x,2\alpha} = \ltn C y \rtn_\alpha + \ltn R^{F(y,y^\prime)} \rtn_{2\alpha}$ using $\ltn(y,y^\prime)\rtn_{x,2\alpha} = \ltn y^\prime \rtn_\alpha + \ltn R^y \rtn_{2\alpha}$. Since
	\begin{eqnarray*}
	\ltn C y \rtn_\alpha &\leq& \|C\| \ltn y \rtn_\alpha \leq  \|C\| \Big(\|y^\prime\|_{\infty} \ltn x\rtn_\alpha + (T-a)^\alpha \ltn R^y \rtn_{2\alpha}\Big)\\
	&\leq& \|C\| \ltn x \rtn_\alpha \|y^\prime_a\| + \|C\|(T-a)^\alpha \ltn x \rtn_\alpha \ltn y^\prime \rtn_\alpha + \|C\| (T-a)^\alpha \ltn R^y \rtn_{2\alpha}
	\end{eqnarray*}
	and
	\begin{eqnarray*}
		\|R^{F(y,y^\prime)}_{s,t}\| &\leq&  \Big\|\int_s^t F(y_u) du\Big\| + \Big\|\int_s^t C y_u dx_u - Cy_s x_{s,t}\Big\| \\
		&\leq&  L_f|t-s| \|y\|_{\infty,[s,t]}+ \|C\| \|y^\prime\|_{\infty,[s,t]} |\X_{s,t}| \\
		&&+ C_\alpha |t-s|^{3\alpha} \Big[\ltn x \rtn_{\alpha,[s,t]} \|C\| \ltn R^y \rtn_{2\alpha,[s,t]} + \|C\| \ltn y^\prime \rtn_{\alpha,[s,t]} \ltn \X \rtn_{2\alpha,\Delta^2([s,t])} \Big],
	\end{eqnarray*}
	where we can choose $T-a <1$ so that $C_\alpha$ can be bounded from above by $C_{\alpha}(1)$. In addition 
	\[
	\|y\|_{\infty,[s,t]} \leq \|y_a\| + \|y^\prime_a\| (T-a)^{\alpha} \ltn x \rtn_{\alpha} + (T-a)^{2 \alpha} \ltn R^y \rtn_{2 \alpha},
	\]
	thus it follows that
	\begin{eqnarray*}
		&&\ltn R^{F(y,y^\prime)} \rtn_{2\alpha} \\
		&\leq& (T-a)^{1-2\alpha} L_f \|y_a\| + (T-a)^{1-\alpha} L_f \ltn x \rtn_\alpha \|y_a^\prime\| + L_f(T-a)\ltn R^y \rtn_{2\alpha} \\
		&&+ \|C\| \ltn \X \rtn_{2\alpha} (|y_a^\prime|+ (T-a)^\alpha \ltn y^\prime \rtn_\alpha)+ C_\alpha \|C\| (T-a)^\alpha \Big[\ltn x \rtn_{\alpha} \ltn R^y \rtn_{2\alpha} + \ltn y^\prime \rtn_{\alpha} \ltn \X \rtn_{2\alpha} \Big] 
	\end{eqnarray*}
	All in all, we can estimate $\ltn \cM (y,y^\prime)\rtn_{x,2\alpha}$ as follows
	\allowdisplaybreaks
	\begin{eqnarray*}
		&&\ltn \cM(y,y^\prime) \rtn_{x, 2\alpha} \\
		&\leq& \|C\|\Big[(\|y^\prime_a\| + (T-a)^\alpha \ltn y^\prime \rtn_\alpha)\ltn x \rtn_\alpha + (T-a)^\alpha \ltn R^y \rtn_{2\alpha} \Big] + \ltn R^{F(y,y^\prime)} \rtn_{2\alpha}\\
		&\leq& (T-a)^{1-2\alpha}L_f \|y_a\|  + \Big[\Big(\|C\| + (T-a)^{1-\alpha} L_f \Big) \ltn x \rtn_{\alpha}+ \|C\| \ltn \X \rtn_{2\alpha} \Big] \|C y_a\| \\
		&&+ \Big[(T-a)^\alpha \|C\| \ltn x \rtn_\alpha + (T-a)^\alpha \|C\| (1+ C_\alpha) \ltn \X \rtn_{2\alpha} \Big] \ltn y^\prime \rtn_\alpha \\
		&&+ \Big[\|C\| (T-a)^\alpha + (T-a)L_f + C_\alpha \|C\| (T-a)^\alpha \ltn x \rtn_\alpha \Big] \ltn R^y \rtn_{2 \alpha}\\
		&\leq& \Big(L_f + \|C\| + \|C\| C_\alpha\Big) (1 + \|C\|) \mu \|y_a\|+ \Big[L_f + \|C\| + \|C\| C_\alpha \Big]\mu \Big(\ltn y^\prime \rtn_\alpha + \ltn R^y \rtn_{2\alpha}\Big)\\
		&\leq& \mu \Big(\|y_a\|+\|Cy_a\| +\ltn y,y^\prime\rtn_{x,2\alpha}\Big)
	\end{eqnarray*}
	where we choose for a fixed number $\mu \in (0,1)$ with
	\[
	M := \max\Big\{ \Big[L_f + \|C\|(1 +C_\alpha)\Big](1+\|C\|), \frac{1}{2}\Big\}
	\]
	and $T = T(a)$ satisfying
	\begin{equation*}
	(T-a)^{1-2\alpha} + \ltn x \rtn_{\alpha, [a,T]} + \ltn \X \rtn_{2\alpha, \Delta^2([a,T])}^{\frac{1}{2}} = \frac{\mu}{2M}<1. 
	\end{equation*}
	Therefore, if we restrict to the set
	\[
	\cB := \Big\{ (y,y^\prime) \in \cD^{2\alpha}_x(y_a,C y_a), \ltn y,y^\prime\rtn_{x,2\alpha} \leq \frac{\mu}{1- \mu} \|y_a\| \Big\}
	\]
	then
	\begin{eqnarray*}
		\ltn \cM(y,y^\prime) \rtn_{x, 2\alpha} &\leq & \mu \|y,y^\prime\|_{x,2\alpha}\leq\Big(\frac{\mu^2}{1- \mu}+\mu\Big)\|y_a\| \leq \frac{\mu}{1-\mu}\|y_a\|,
	\end{eqnarray*}
	which proves that $\cM: \cB \to \cB$. By Schauder-Tichonorff theorem, there exists a  fixed point of $\cM$ which is a solution of equation \eqref{RDE1} on the interval $[a,T]$. Next, for any two solutions $(y,y^\prime), (\bar{y},\bar{y}^\prime)$ of the same initial conditions $(y_a, Cy_a)$, by similar computations, one get
	\begin{eqnarray*}
	\ltn (y,y^\prime) - (\bar{y},\bar{y}^\prime)\rtn_{x,2\alpha}
	&\leq& \mu \Big(\|y_a-\bar{y}_a\| + \ltn (y,y^\prime) - (\bar{y},\bar{y}^\prime) \rtn_{x,2\alpha}\Big) \leq  \mu \ltn (y,y^\prime)-(\bar{y},\bar{y}^\prime)\rtn_{x,2\alpha} 
	\end{eqnarray*}
	and together with $\mu <1$, this proves the uniqueness of solution of \eqref{RDE1} on $[a,T]$. By constructing the greedy time sequence \eqref{greedywitht}, we can extend and prove the existence of the unique solution on the whole real line. It is easy to see that solution $y_t$ depends linearly on initial $y_a$, hence there exists a solution matrix $\Phi(t,a,x,\X)$ of equation \eqref{linRDE3}. The similar conclusion holds for the backward equation.	
\end{proof}	
The estimate of the solution under the supremum norm $\|\cdot\|_\infty$ and the $\ltn \cdot,\cdot \rtn_{x,2\alpha}$ semi-norm is proved straight forward. 
\begin{theorem}
	Assume $G(y)= Cy$. For any interval $[a,b]$, the seminorm $\ltn y,y^\prime \rtn_{x,2\alpha,[a,b]}$ and the supremum norm $\|y\|_{\infty,[a,b]}$ are estimated as follows.
	\begin{eqnarray}
	\ltn y,y^\prime \rtn_{x,2\alpha,[a,b]} 
	&\leq& \|y_a\|\exp \Big\{\bar{N}_{\frac{\mu}{M},[a,b],\alpha}(\bx)\log\Big(\mu+\frac{1}{1-\mu}\Big) \Big\}; \\
	\|y\|_{\infty,[a,b]} &\leq& \|y_a\|\exp \Big\{\bar{N}_{\frac{\mu}{M},[a,b],\alpha}(\bx)\log\Big(\mu+\frac{1}{1-\mu}\Big) \Big\}. \label{ysupest}
	\end{eqnarray}
	
\end{theorem}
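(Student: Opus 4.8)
The plan is to cut $[a,b]$ along the greedy times of \eqref{greedywitht} and to propagate the local a priori bound of the preceding theorem multiplicatively from one greedy interval to the next. Fix the parameter $\gamma=\tfrac{\mu}{M}$ of the statement and let $a=\bar\tau_0<\bar\tau_1<\cdots<\bar\tau_{\bar N}=b$ be the corresponding greedy partition, $\bar N=\bar N_{\gamma,[a,b],\alpha}(\bx)$, so that $(\bar\tau_{i+1}-\bar\tau_i)^{1-2\alpha}+\ltn\bx\rtn_{\alpha,[\bar\tau_i,\bar\tau_{i+1}]}\le\gamma$ on each piece; in particular every piece meets the smallness hypothesis of the previous (existence) theorem, so the restriction of $(y,y^\prime)$ to $[\bar\tau_i,\bar\tau_{i+1}]$ is the unique fixed point of $\cM$ in the corresponding ball $\cB$ for initial data $(y_{\bar\tau_i},Cy_{\bar\tau_i})$, whence
\[
\ltn y,y^\prime\rtn_{x,2\alpha,[\bar\tau_i,\bar\tau_{i+1}]}\ \le\ \frac{\mu}{1-\mu}\,\|y_{\bar\tau_i}\|,\qquad i=0,\dots,\bar N-1 .
\]

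I would then convert this seminorm control into a pointwise growth bound. For $t\in[\bar\tau_i,\bar\tau_{i+1}]$ the controlled-path expansion reads $y_{\bar\tau_i,t}=Cy_{\bar\tau_i}\,x_{\bar\tau_i,t}+R^y_{\bar\tau_i,t}$, with $\|x_{\bar\tau_i,t}\|\le\ltn x\rtn_{\alpha,[\bar\tau_i,\bar\tau_{i+1}]}(t-\bar\tau_i)^{\alpha}\le\gamma$ and $\|R^y_{\bar\tau_i,t}\|\le\ltn R^y\rtn_{2\alpha,[\bar\tau_i,\bar\tau_{i+1}]}(t-\bar\tau_i)^{2\alpha}\le\tfrac{\mu}{1-\mu}\|y_{\bar\tau_i}\|$; since $M\ge\|C\|$ we have $\|C\|\gamma\le\mu$, so
\[
\|y\|_{\infty,[\bar\tau_i,\bar\tau_{i+1}]}\ \le\ \Big(1+\|C\|\gamma+\tfrac{\mu}{1-\mu}\Big)\|y_{\bar\tau_i}\|\ \le\ \Big(\mu+\tfrac{1}{1-\mu}\Big)\|y_{\bar\tau_i}\|\ =:\ \rho\,\|y_{\bar\tau_i}\| .
\]
Taking $t=\bar\tau_{i+1}$ and iterating gives $\|y_{\bar\tau_i}\|\le\rho^{\,i}\|y_a\|$, hence $\|y\|_{\infty,[a,b]}=\max_{0\le i<\bar N}\|y\|_{\infty,[\bar\tau_i,\bar\tau_{i+1}]}\le\rho^{\bar N}\|y_a\|$, which is \eqref{ysupest}. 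The first (seminorm) inequality of the statement is obtained in the last two paragraphs.

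For $\ltn y,y^\prime\rtn_{x,2\alpha,[a,b]}=\ltn y^\prime\rtn_{\alpha,[a,b]}+\ltn R^y\rtn_{2\alpha,[a,b]}$ I would patch the local bounds along the same partition. The increments of $y^\prime=Cy$ are additive, so splitting an increment $y^\prime_{s,t}$ at the greedy times between $s$ and $t$ and using that each sub-increment has length $\le t-s$ gives $\ltn y^\prime\rtn_{\alpha,[a,b]}\le\sum_{i}\ltn y^\prime\rtn_{\alpha,[\bar\tau_i,\bar\tau_{i+1}]}$; the same splitting applied to the ``diagonal part'' of the almost-additivity relation $R^y_{s,t}=R^y_{s,u}+R^y_{u,t}+y^\prime_{s,u}x_{u,t}$ gives $\ltn R^y\rtn_{2\alpha,[a,b]}\le\sum_{i}\ltn R^y\rtn_{2\alpha,[\bar\tau_i,\bar\tau_{i+1}]}+(\text{cross terms})$, a typical cross term being $\ltn y^\prime\rtn_{\alpha,[\bar\tau_m,\bar\tau_{m+1}]}\,\|x_{\bar\tau_{m+1},t}\|\,(t-s)^{-2\alpha}$. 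Summing the local bounds with $\|y_{\bar\tau_i}\|\le\rho^{\,i}\|y_a\|$ and using $\rho-1=\mu+\tfrac{\mu}{1-\mu}$,
\[
\sum_{i=0}^{\bar N-1}\ltn y,y^\prime\rtn_{x,2\alpha,[\bar\tau_i,\bar\tau_{i+1}]}\ \le\ \frac{\mu}{1-\mu}\,\|y_a\|\sum_{i=0}^{\bar N-1}\rho^{\,i}\ \le\ \frac{\mu/(1-\mu)}{\rho-1}\,\rho^{\bar N}\|y_a\|\ =\ \frac{1}{2-\mu}\,\rho^{\bar N}\|y_a\| .
\]

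The genuinely delicate point, which I expect to be the only real obstacle, is to fit the cross terms inside the residual $\tfrac{1-\mu}{2-\mu}\rho^{\bar N}\|y_a\|$. Bounding $\|x_{\bar\tau_{m+1},t}\|$ by summing $\ltn x\rtn_{\alpha,[\bar\tau_l,\bar\tau_{l+1}]}\le\gamma$ over the intervening pieces $l>m$, using concavity of $r\mapsto r^{\alpha}$ and the elementary inequality $u^\alpha v^\alpha\le(u+v)^{2\alpha}$, the total cross-term contribution is at most
\[
\gamma\,\frac{\mu}{1-\mu}\,\|y_a\|\sum_{m=0}^{\bar N-1}\rho^{\,m}(\bar N-m)^{1-\alpha}\ \le\ \gamma\,\frac{\mu}{1-\mu}\,K(\rho,\alpha)\,\rho^{\bar N}\|y_a\|,\qquad K(\rho,\alpha):=\sum_{n\ge1}n^{1-\alpha}\rho^{-n}<\infty,
\]
i.e.\ the polynomial-in-$\bar N$ loss of the naive estimate is reabsorbed by the geometric weight $\rho^{-n}$. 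Since $\gamma=\mu/M$ and one checks $\mu^{2}K(\rho,\alpha)\to0$ as $\mu\to0$ (the blow-up of $K$ is only polynomial, of order $(\rho-1)^{-(2-\alpha)}$, while $\rho-1$ is of order $\mu$), the constant $\gamma\tfrac{\mu}{1-\mu}K(\rho,\alpha)$ is $\le\tfrac{1-\mu}{2-\mu}$ once $\mu$ is small enough --- which, as in the stability theorems, we may assume. Together with the previous paragraph this yields $\ltn y,y^\prime\rtn_{x,2\alpha,[a,b]}\le\rho^{\bar N}\|y_a\|=\|y_a\|\exp\big\{\bar N\log\big(\mu+\tfrac{1}{1-\mu}\big)\big\}$, establishing the first inequality of the statement. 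The cross-term bookkeeping (and, secondarily, the normalization $|[a,b]|\le 1$ needed to keep each $(\bar\tau_{i+1}-\bar\tau_i)^{\alpha}\le 1$) is the only substantive point; the remaining steps are routine.
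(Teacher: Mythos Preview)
Your argument for the sup-norm bound \eqref{ysupest} is essentially identical to the paper's: both propagate the local a priori estimate $\ltn y,y'\rtn_{x,2\alpha,[\bar\tau_i,\bar\tau_{i+1}]}\le\frac{\mu}{1-\mu}\|y_{\bar\tau_i}\|$ along the greedy partition, expand $y_{\bar\tau_i,t}=Cy_{\bar\tau_i}\,x_{\bar\tau_i,t}+R^y_{\bar\tau_i,t}$, and use $\|C\|\le M$ to get the growth factor $\rho=\mu+\frac{1}{1-\mu}$ and then iterate.

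The difference lies in the seminorm bound. The paper simply asserts
\[
\ltn y,y'\rtn_{x,2\alpha,[a,b]}\ \le\ \sum_{i}\ltn y,y'\rtn_{x,2\alpha,[\bar\tau_i,\bar\tau_{i+1}]}
\]
and then sums the geometric series $\frac{\mu}{1-\mu}\sum_{i<\bar N}\rho^{\,i}\le\frac{1}{2-\mu}\rho^{\bar N}\le\rho^{\bar N}$, with no mention of the cross terms $y'_{s,u}x_{u,t}$ arising from the relation $R^y_{s,t}=R^y_{s,u}+R^y_{u,t}+y'_{s,u}x_{u,t}$. You are right that this subadditivity is not automatic for $\ltn R^y\rtn_{2\alpha}$, and your bookkeeping of the cross terms is more honest than what the paper does. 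However, the paper treats this step as immediate (it does the same again in the Appendix when bounding $\ltn(\theta,\theta')\rtn_{x,2\alpha,[a,b]}$ by $\frac{\mu}{1-\mu}N$), so the extra smallness constraint on $\mu$ that your argument needs is not present in the paper's version. One minor point: your remark that $|[a,b]|\le1$ is needed to keep $(\bar\tau_{i+1}-\bar\tau_i)^\alpha\le1$ is unnecessary, since the greedy condition already forces $(\bar\tau_{i+1}-\bar\tau_i)^{1-2\alpha}\le\gamma<1$ on every piece regardless of $b-a$.

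In short: same strategy throughout; on the one delicate step the paper is informal where you are careful, and your care costs an additional hypothesis the paper does not impose.
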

\begin{proof}
	To estimate $\ltn y,y^\prime \rtn_{x,2\alpha}$, we use the same greedy time \eqref{greedywitht} to get
	\begin{eqnarray*}
		\ltn y,y^\prime \rtn_{x,2\alpha,[\bar{\tau}_i,\bar{\tau}_{i+1}]} &\leq&  \frac{\mu}{1-\mu} \|y_{\bar{\tau}_i}\| 
	\end{eqnarray*}
	so that
	\begin{eqnarray}\label{ytaui}
	\|y_{\bar{\tau}_{i+1}}\| &\leq& \|y\|_{\infty,[\tau_i,\tau_{i+1}]} \leq \|y_{\bar{\tau}_i}\| + \|C\| \|y_{\bar{\tau}_i}\| (\bar{\tau}_{i+1}-\bar{\tau}_i)^\alpha \ltn x \rtn_\alpha + \ltn y,y^\prime \rtn_{x,2\alpha,[\bar{\tau}_i,\bar{\tau}_{i+1}]} \notag\\
	&\leq&  \Big(1+ \|C\| (\bar{\tau}_{i+1}-\bar{\tau}_i)^\alpha \ltn x \rtn_\alpha + \frac{\mu}{1-\mu} \Big) \|y_{\bar{\tau}_i}\| \leq  \Big(\mu+\frac{1}{1-\mu}\Big)\|y_{\bar{\tau}_i}\|.
	\end{eqnarray}
	As a result
	\begin{equation*}
	\ltn y,y^\prime \rtn_{x,2\alpha,[\bar{\tau}_i,\bar{\tau}_{i+1}]} \leq \frac{\mu}{1-\mu} \|y_{\bar{\tau}_i}\| \leq \frac{\mu}{1-\mu}\Big(\mu+ \frac{1}{1-\mu}\Big)^i \|y_a\|
	\end{equation*}
	and therefore
	\begin{eqnarray*}
	\ltn y,y^\prime \rtn_{x,2\alpha,[a,b]} &\leq&\sum_{i=0}^{\bar{N}_{\frac{\mu}{M},[a,b],\alpha}(\bx)}	\ltn y,y^\prime \rtn_{x,2\alpha,[\bar{\tau}_i,\bar{\tau}_{i+1}]}\leq \sum_{i=0}^{\bar{N}_{\frac{\mu}{M},[a,b],\alpha}(\bx)}\frac{\mu}{1-\mu}\Big(\mu+\frac{1}{1-\mu}\Big)^i \|y_a\|\notag\\
	&\leq& \|y_a\|\exp \Big\{\bar{N}_{\frac{\mu}{M},[a,b],\alpha}(\bx)\log\Big(\mu+\frac{1}{1-\mu}\Big) \Big\}. 
	\end{eqnarray*}
	The same estimate using \eqref{ytaui} shows \eqref{ysupest}.
\end{proof}	

\section{Stability results}
We first present the definition of pathwise stability (see e.g. \cite{duchongcong18}).
\begin{definition}\label{Defstability}
	(A) Stability: A solution $\mu(\cdot)$ of the deterministic differential equation \eqref{RDE1} is called stable, if for any $\varepsilon >0$ there exists an $r =r(\varepsilon)>0$ such that for any solution $y$ of  \eqref{RDE1} satisfying  $\|y_a-\mu_a\| < r$ the following inequality holds % $t_0\geq 0$ and
	\[
	\sup_{t\geq a}\|y_t-\mu_t\| < \varepsilon.
	\]
	(B) Attractivity: $\mu$ is called attractive, if  there exists $r >0$ such that for any solution $y$ of  \eqref{RDE1} satisfying  $\|y_a-\mu_a\| < r$ we have
	\[
	\lim \limits_{t \to \infty} \|y_t-\mu_t\| = 0.   
	\]	
	(C) Asymptotic stability: $\mu$ is called
	\begin{enum}
		\item asymptotically stable, if it is stable and attractive.
		\item exponentially  stable, if it is stable and there exists $r>0$ such that for any solution $y$ of  \eqref{RDE1} satisfying  $\|y_a-\mu_a\| < r$ we have
		\[
		\limsup \limits_{t \to \infty} \frac{1}{t} \log\|y_t-\mu_t\|
		< 0. 
		\]
	\end{enum}
\end{definition}

\subsection{Case 1. $\nu \in (\frac{1}{2},1)$: Young systems}

\begin{lemma}\label{roughy}
	Let $\gamma(s,t)$ be a control function, $\Lambda([s,t])$ a positive increasing function w.r.t. the inclusion of interval set $[s,t]$. Assume $\theta \in C^{q-{\rm var}}$ satisfying for any $s,t \in [a,b]$
	\begin{equation}\label{condy}
	\ltn \theta \rtn_{q{\rm - var},[s,t]} 
	\leq \gamma(s,t) + \Lambda([s,t])\ltn x \rtn_{p-{\rm var},[s,t]} + 2K \Lambda([s,t])\ltn x \rtn_{p-{\rm var},[s,t]}\ltn \theta \rtn_{q-{\rm var},[s,t]}.
	\end{equation}
	Then for any $s,t \in [a,b]$
	\begin{equation}\label{concludey}
	\ltn \theta \rtn_{q-{\rm var},[s,t]} 
	\leq 2 \gamma(s,t) + 2 \Lambda([s,t])\ltn x \rtn_{p-{\rm var},[s,t]} + (2K)^{p-1} (2\Lambda([s,t]))^p \ltn x \rtn_{p-{\rm var},[s,t]}^p. 
	\end{equation}
\end{lemma}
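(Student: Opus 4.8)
The plan is to localise the inequality by a greedy subdivision of $[s,t]$ on which the feedback term $2K\Lambda\ltn x\rtn_{p\text{-var}}$ is negligible, and then to reassemble the pieces. The three structural facts to be used are: superadditivity of the control $(s,t)\mapsto\ltn x\rtn_{p\text{-var},[s,t]}^p$, superadditivity of the control $\gamma$, monotonicity of $\Lambda$ under inclusion, and subadditivity of the $q$-variation seminorm under concatenation of adjacent intervals.

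First I would fix $s<t$ in $[a,b]$, write $L:=\Lambda([s,t])>0$, and construct the partition $s=t_0<t_1<\cdots<t_N=t$ by $t_0:=s$ and
\[
t_{i+1}:=\inf\Big\{u>t_i:\ 2KL\,\ltn x\rtn_{p\text{-var},[t_i,u]}=\tfrac12\Big\}\wedge t .
\]
Since $x$ is continuous, $u\mapsto\ltn x\rtn_{p\text{-var},[t_i,u]}^p$ is a continuous control, so each of the first $N-1$ intervals satisfies $\ltn x\rtn_{p\text{-var},[t_i,t_{i+1}]}=\tfrac{1}{4KL}$ while the last one satisfies $2KL\,\ltn x\rtn_{p\text{-var},[t_{N-1},t]}\le\tfrac12$. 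Superadditivity of $\ltn x\rtn_{p\text{-var}}^p$ gives $(N-1)(4KL)^{-p}\le\ltn x\rtn_{p\text{-var},[s,t]}^p$, which both shows that the construction terminates and yields the count $N\le 1+(4KL)^p\ltn x\rtn_{p\text{-var},[s,t]}^p$.

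On each $[t_i,t_{i+1}]$ I would apply \eqref{condy}, use $\Lambda([t_i,t_{i+1}])\le L$ and $2K\Lambda([t_i,t_{i+1}])\ltn x\rtn_{p\text{-var},[t_i,t_{i+1}]}\le\tfrac12$, and absorb the feedback term — legitimate because $\ltn\theta\rtn_{q\text{-var},[t_i,t_{i+1}]}\le\ltn\theta\rtn_{q\text{-var},[a,b]}<\infty$ — to obtain $\ltn\theta\rtn_{q\text{-var},[t_i,t_{i+1}]}\le 2\gamma(t_i,t_{i+1})+2L\,\ltn x\rtn_{p\text{-var},[t_i,t_{i+1}]}$. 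Then, using the concatenation subadditivity $\ltn\theta\rtn_{q\text{-var},[s,t]}\le\sum_{i=0}^{N-1}\ltn\theta\rtn_{q\text{-var},[t_i,t_{i+1}]}$, the superadditivity bound $\sum_i\gamma(t_i,t_{i+1})\le\gamma(s,t)$, and
\[
\sum_{i=0}^{N-1}\ltn x\rtn_{p\text{-var},[t_i,t_{i+1}]}\le \ltn x\rtn_{p\text{-var},[s,t]}+(N-1)\tfrac{1}{4KL}\le \ltn x\rtn_{p\text{-var},[s,t]}+(4KL)^{p-1}\ltn x\rtn_{p\text{-var},[s,t]}^p ,
\]
one multiplies the last display by $2L$ and adds $2\gamma(s,t)$, arriving at exactly \eqref{concludey} after noting $2L(4KL)^{p-1}=(2K)^{p-1}(2L)^p$.

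I expect the one genuinely non-routine step to be the concatenation inequality $\ltn\theta\rtn_{q\text{-var},[s,t]}\le\sum_i\ltn\theta\rtn_{q\text{-var},[t_i,t_{i+1}]}$: an arbitrary competing partition of $[s,t]$ need not contain the greedy times $t_i$, so after inserting them one must absorb the straddling increments via $|\theta_{u,v}|\le|\theta_{u,t_i}|+|\theta_{t_i,v}|$, and this reduces to the elementary fact that $(x,y)\mapsto(x+y)^p-x^p-y^p$ is nondecreasing in each variable for $p\ge1$; this is standard (see e.g.\ \cite{friz}) but should be invoked explicitly. The remaining care is simply in checking the greedy partition is well defined and finite, and in the bookkeeping choice of estimating the last (possibly short) subinterval by the \emph{global} $\ltn x\rtn_{p\text{-var},[s,t]}$ rather than by $\tfrac{1}{4KL}$ — this is precisely what produces the linear term $2\Lambda([s,t])\ltn x\rtn_{p\text{-var},[s,t]}$ in \eqref{concludey} and makes the bound sharp when $\ltn x\rtn_{p\text{-var},[s,t]}$ is small.
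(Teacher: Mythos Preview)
Your proof is correct and follows essentially the same greedy-time strategy as the paper: the same threshold $\ltn x\rtn_{p\text{-var},[t_i,t_{i+1}]}=\tfrac{1}{4K\Lambda([s,t])}$, the same local absorption giving the factor $2$, and the same count $N-1\le(4K\Lambda)^p\ltn x\rtn_{p\text{-var},[s,t]}^p$. The only minor difference is in the reassembly step: the paper first bounds the single increment $\|\theta_t-\theta_s\|$ by summing over the greedy intervals and then upgrades to the $q$-variation bound by noting that the right-hand side is built from controls, whereas you go directly via the concatenation subadditivity $\ltn\theta\rtn_{q\text{-var},[s,t]}\le\sum_i\ltn\theta\rtn_{q\text{-var},[t_i,t_{i+1}]}$ --- an equivalent and arguably cleaner route.
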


\begin{proof}
	We apply the same arguments as in \cite[Proposition 5.10, pp.~83-84]{friz}. Namely, for any fixed $[s,t] \subset [a,b]$, it follows from \eqref{condy} that for $[u,v] \subset [s,t]$
	\begin{eqnarray}\label{yqvar}
	\ltn \theta \rtn_{q-{var},[u,v]} \leq 2 \gamma(u,v) + 2 \Lambda([s,t])\ltn x \rtn_{p-{\rm var},[u,v]} \quad\text{whenever} \quad \ltn x \rtn_{p-{\rm var},[u,v]} \leq \frac{1}{4 K\Lambda([s,t])}. 
	\end{eqnarray}
	Assume that $\ltn x \rtn_{p-{\rm var},[s,t]} > \frac{1}{4 K\Lambda([s,t])}$, define a sequence of greedy time 
	\[
	t_0 = s,\quad t_{i+1} := \inf \{ u \geq t_i, \ltn x \rtn_{p-{\rm var},[t_i,u]} = \frac{1}{4 K\Lambda([s,t])} \} \wedge t.
	\]
	The sequence would end up at some time $t_N = t$, with 
	\[
	(N-1)\Big(\frac{1}{4 K\Lambda([s,t])}\Big)^p= \sum_{i =0}^{N-1} \ltn x \rtn_{p-{\rm var},[t_i,t_{i+1}]}^p \leq \ltn x \rtn_{p-{\rm var},[s,t]}^p,  
	\] 
	so that
	\[
	N-1 \leq \Big(4K\Lambda([s,t])\Big)^p\ltn x \rtn_{p-{\rm var},[s,t]}^p.
	\]
	Together with \eqref{yqvar} and the greedy times $t_i$, we derive
		\allowdisplaybreaks
	\begin{eqnarray*}
		&&\|\theta_t-\theta_s\| \leq \sum_{i=0}^{N-1} \|\theta_{t_{i+1}}-\theta_{t_i}\| \\
		&\leq&  \sum_{i=0}^{N-2} \Big(2 \gamma(t_i,t_{i+1}) + 2 \Lambda([s,t]) \frac{1}{4 K\Lambda([s,t])}\Big) + 2 \gamma(t_{N-1},t_N) + 2\Lambda([s,t]) \ltn x \rtn_{p{\rm -var},[t_{N-1},t_N]} \\
		&\leq&  2 \gamma(s,t) +(N-1) \frac{1}{2K} + 2 \Lambda([s,t]) \ltn x \rtn_{p{\rm -var},[s,t]}\\
		&\leq& 2 \gamma(s,t) + \frac{1}{2K} \Big(4K\Lambda([s,t])\Big)^p \ltn x \rtn_{p-{\rm var},[s,t]}^p + 2 \Lambda([s,t]) \ltn x \rtn_{p{\rm -var},[s,t]},
	\end{eqnarray*}
	in case $\ltn x \rtn_{p-{\rm var},[s,t]} > \frac{1}{4 K\Lambda([s,t])}$. All in all, for any $s,t \in [a,b]$
	\[
	\|\theta_t-\theta_s\| \leq 2 \gamma(s,t) + 2 \Lambda([s,t])\ltn x \rtn_{p-{\rm var},[s,t]}+ (2K)^{p-1} \Big(2\Lambda([s,t])\Big)^p \ltn x \rtn_{p-{\rm var},[s,t]}^p.
	\]
	Using the fact that $\gamma(s,t)$ and $\ltn x \rtn_{p-{\rm var},[s,t]}^p$ are control functions, it follows from the definition of $q$-var seminorm that for all $a\leq s \leq t \leq b$
	\begin{equation*}
	\ltn \theta \rtn_{q-{\rm var},[s,t]} 
	\leq 2 \gamma(s,t) + 2 \Lambda([s,t])\ltn x \rtn_{p-{\rm var},[s,t]} 
	+ (2K)^{p-1} (2 \Lambda([s,t]))^p \ltn x \rtn_{p-{\rm var},[s,t]}^p.
	\end{equation*}
\end{proof}	
	\allowdisplaybreaks
\begin{lemma}\label{localstabPro}
	Assume that there exist positive increasing functions $H,\kappa_1,\kappa_2$ with 
	\begin{eqnarray}
	E \kappa_1 (\ltn x \rtn_{p-{\rm var},[0,1]}) &<& \infty; \label{kappaexpect}
	\end{eqnarray}
	such that $y_t$ satisfying
	\begin{equation}\label{logx20}
	\log \|y_t\| \leq \log \|y_a\| + \int_a^t [H(\|y_s\|)-\lambda_A]ds + C_g \kappa_1(\ltn x \rtn_{p-{\rm var},[a,t]}) +C_g \kappa_2(\|y_a\|),\quad \forall a\leq t \leq a+1. 
	\end{equation}
	If $H(0) < \lambda_A$ then there exists $\epsilon >0$ such that for all $C_g<\epsilon$ the zero solution is locally exponentially stable a.s. 
\end{lemma}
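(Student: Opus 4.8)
The plan is to discretise time into unit intervals $[a+n,a+n+1]$, turn the given inequality \eqref{logx20} into a one‑step recursion at the integer times, and read off exponential decay from the ergodic behaviour of the noise; the one genuinely delicate point is an \emph{a priori} trapping of the solution in a small ball, which is needed before the recursion can be exploited. I would begin by fixing $\rho>0$ with $H(\rho)<\lambda_A$ (possible since $H$ is increasing with $H(0)<\lambda_A$), and setting $\lambda:=\lambda_A-H(\rho)>0$ and $c_2:=\kappa_2(\rho)$. For $n\ge0$ put $\xi_n:=\kappa_1\!\left(\ltn x\rtn_{p-{\rm var},[a+n,a+n+1]}\right)$ and $S_n:=\sum_{k=0}^{n-1}\xi_k$. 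Since $x=X(\omega)$ has stationary increments, $(\xi_n)_{n\ge0}$ is stationary with $E\xi_0=E\kappa_1(\ltn x\rtn_{p-{\rm var},[0,1]})<\infty$ by \eqref{kappaexpect}, and it is ergodic for the Gaussian drivers considered here (the unit‑increment process of fractional Brownian motion, and more generally, is mixing). Hence by Borel--Cantelli $\xi_n/n\to0$ a.s., and by Birkhoff's theorem $\tfrac1nS_n\to c_1:=E\xi_0$ a.s.; with $\gamma:=c_1+1$, the random variable $M(\omega):=\sup_{n\ge0}(S_n-\gamma n)$ is finite a.s.\ and $\ge0$. I would then prove the lemma with $\epsilon:=\lambda/(2\gamma+c_2)$.

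\emph{Step 1 (trapping --- the crux).} Fix $C_g<\epsilon$ and set $r_1(\omega):=\rho\,e^{-C_g(2M(\omega)+\gamma+c_2)}>0$; I claim $\|y_a\|<r_1(\omega)$ forces $\|y_t\|\le\rho$ for all $t\ge a$. If not, continuity of $y$ gives a finite $\tau:=\inf\{t\ge a:\|y_t\|>\rho\}$ with $\|y_\tau\|=\rho$ and $\|y_s\|\le\rho$ on $[a,\tau]$; put $n_0:=\lfloor\tau-a\rfloor$. On each $[a+k,a+k+1]\subset[a,a+n_0]$, the inequality \eqref{logx20} (used from the starting time $a+k$), together with $H(\|y_s\|)\le H(\rho)$, $\kappa_2(\|y_{a+k}\|)\le c_2$ and the monotonicity of the $p$‑variation in the interval, gives $\log\|y_{a+k+1}\|\le\log\|y_{a+k}\|-\lambda+C_g\xi_k+C_gc_2$; summing these and using $S_{n_0}\le\gamma n_0+M$ yields $\log\|y_{a+n_0}\|\le\log\|y_a\|-(\lambda-C_g(\gamma+c_2))n_0+C_gM$. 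Applying \eqref{logx20} once more on $[a+n_0,\tau]$, where $\int_{a+n_0}^{\tau}[H(\|y_s\|)-\lambda_A]\,ds\le0$ and $\kappa_1(\ltn x\rtn_{p-{\rm var},[a+n_0,\tau]})\le\xi_{n_0}\le S_{n_0+1}\le\gamma(n_0+1)+M$, gives $\log\rho=\log\|y_\tau\|\le\log\|y_a\|-(\lambda-C_g(2\gamma+c_2))\,n_0+C_g(2M+\gamma+c_2)$. Since $C_g(2\gamma+c_2)<\lambda$ the middle term is $\le0$, so $\log\rho\le\log\|y_a\|+C_g(2M+\gamma+c_2)<\log\rho$, a contradiction.

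\emph{Step 2 (decay and stability).} Assuming now $\|y_a\|<r_1(\omega)$, Step 1 makes the one‑step bound valid on every $[a+n,a+n+1]$, so iterating it gives $\log\|y_{a+n}\|\le\log\|y_a\|-\lambda'n+C_gM$ with $\lambda':=\lambda-C_g(\gamma+c_2)>0$; and for $t\in[a+n,a+n+1]$, \eqref{logx20} on $[a+n,t]$ (drift integral $\le0$) gives $\log\|y_t\|\le\log\|y_{a+n}\|+C_g\xi_n+C_gc_2$. Combining the two and using $\xi_n/n\to0$ a.s.\ shows $\limsup_{t\to\infty}\tfrac1t\log\|y_t\|\le-\lambda'<0$; and since $-\lambda'n+C_g\xi_n\to-\infty$, the quantity $C(\omega):=e^{C_g(M+c_2)}\sup_{n\ge0}e^{-\lambda'n+C_g\xi_n}$ is a.s.\ finite, whence $\sup_{t\ge a}\|y_t\|\le C(\omega)\|y_a\|$. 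Therefore for every $\varepsilon>0$ the radius $r(\varepsilon,\omega):=\min\{r_1(\omega),\varepsilon/C(\omega)\}$ yields $\sup_{t\ge a}\|y_t\|<\varepsilon$ whenever $\|y_a\|<r$ (this is stability), while $r_1(\omega)$ serves as the radius in the exponential clause of Definition~\ref{Defstability}(C). This establishes local exponential stability a.s.

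\emph{Expected obstacle.} The hard part is the trapping in Step 1: \eqref{logx20} produces decay only as long as $\|y\|$ stays below $\rho$ (where $H(\|y_s\|)\le H(\rho)<\lambda_A$), so an excursion above $\rho$ must be excluded before the recursion is usable, yet on the last, incomplete subinterval $[a+n_0,\tau]$ the noise contribution $C_g\xi_{n_0}$ is not bounded uniformly in $n_0$. It is tamed only by combining the superadditivity of the $p$‑variation with the ergodic estimate $\xi_{n_0}\le\gamma(n_0+1)+M$ and the smallness of $C_g$; it is precisely here that ergodicity (and not just stationarity) of the block sequence $(\xi_n)$ is used, in order to keep the threshold $\epsilon$ deterministic.
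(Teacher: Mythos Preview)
Your argument is correct, and it takes a genuinely different route from the paper's. The paper follows Arnold's random norm technique: it introduces the shifted variable $z_t=\log\|y_t\|+(\lambda_A-H(0)-\epsilon)t-\Gamma(t,x)$ with $\Gamma$ the cumulative noise contribution, waits for a random time $m=m(\omega)$ after which the ergodic theorem guarantees $\Gamma(t,x)<(\lambda_A-H(0)-\epsilon)t$, and then runs a continuity argument in $z$ on $[m,\infty)$ to show $z_{n+1}<z_n$. Crucially, to bridge the transient interval $[0,m]$ the paper invokes the \emph{a priori} solution bound \eqref{xqvar} from the existence theorem, which is how the initial radius $r(x)$ is manufactured. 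Your proof avoids both the change of variable and the external estimate \eqref{xqvar}: you absorb the transient behaviour into the single random constant $M(\omega)=\sup_n(S_n-\gamma n)$ and run a direct first-exit-time trapping argument using only \eqref{logx20} itself. This makes your proof of the abstract lemma self-contained (it does not lean on the Young-equation growth bound), and the bookkeeping is arguably cleaner; on the other hand, the paper's random norm viewpoint is the one that generalises to the infinite-dimensional and non-autonomous settings treated in \cite{arnold}, and its continuity argument in $z$ gives slightly sharper control of $\delta$ via $H$ near $0$ rather than at a fixed level $\rho$. Both approaches rely on Birkhoff's ergodic theorem for the block sequence $\kappa_1(\ltn x\rtn_{p\text{-var},[k,k+1]})$, and both produce a deterministic threshold $\epsilon$ with a pathwise radius of attraction.
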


\begin{proof}
	We apply the random norm techniques in \cite[Chapter 6]{arnold} to translate the original problem for random integral inequality \eqref{logx20} into the problem for deterministic integral inequality. Fix an $0<\epsilon <\lambda_A - H(0)- \epsilon E \kappa_1 (\ltn x \rtn_{p-{\rm var},[0,1]})$ and assign
	\[
	\Gamma(t,x) := C_g\kappa_1(\ltn x \rtn_{p-{\rm var},[n,t]}) + \sum_{k=0}^{n-1}  C_g\kappa_1(\ltn x \rtn_{p-{\rm var},[k,k+1]}), \quad \forall n \geq 0, \forall t \in [n,n+1].
	\]
	Then it follows from \eqref{logx20} that
	\[
	\log \|y_t\| \leq \log \|y_n\| + \int_n^t [H(\|y_s\|)-\lambda_A]ds + C_g\kappa_1(\ltn x \rtn_{p-{\rm var},[n,t]}) + C_g \kappa_2(\|y_n\|),\quad \forall t \in [n,n+1]. 
	\]
	Hence for any $t \in [n,n+1]$
	\begin{eqnarray*}
		&&\log \|y_t\| \exp \{(\lambda_A - H(0) - \epsilon) t - \Gamma(t,x)\} \\
		&\leq& \log \|y_n\|  \exp\{(\lambda_A - H(0) - \epsilon) n -  \Gamma(n,x)\} \\
		&&+ \int_n^t \Big[H\Big(\|y_s\|\exp \{(\lambda_A - H(0) - \epsilon) s - \Gamma(s,x)\} \exp \{-(\lambda_A - H(0) - \epsilon) s + \Gamma(s,x)\}\Big)\\
		&&\qquad \qquad \qquad -(H(0) + \epsilon)\Big]ds\\
		&&+  C_g\kappa_2\Big(\|y_n\|\exp \{(\lambda_A - H(0) - \epsilon) n - \Gamma(n,x)\} \exp \{-(\lambda_A - H(0) - \epsilon) n + \Gamma(n,x)\}\Big).
	\end{eqnarray*}
	From the definitions of $\Gamma$ and $\kappa_1$, for almost sure all $x$ there exist the limit
	\begin{equation}\label{gamma}
	\lim \limits_{t \to \infty} \frac{\Gamma(t,x)}{t} = C_g\lim \limits_{n \to \infty} \frac{1}{n} \sum_{k=0}^{n-1}  \kappa_1(\ltn x \rtn_{p-{\rm var},[k,k+1]}) = C_g E \kappa_1 (\ltn x \rtn_{p-{\rm var},[0,1]})< \lambda_A - H(0) - \epsilon,
	\end{equation}
	thus there exists an integer $m=m(\lambda_A - H(0) - \epsilon,x)$ such that
	$-(\lambda_A - H(0) - \epsilon) t + \Gamma(t,x) < 0$ for any $t \geq m(\lambda_A - H(0) - \epsilon,x)$. Assign 
	\[
	z_t := \log \|y_t\| \exp\{(\lambda_A - H(0) - \epsilon) t - \Gamma(t,x)\} = \log \|y_t\| + (\lambda_A - H(0) - \epsilon) t - \Gamma(t,x), \forall t\geq0.
	\]
	Because $H$ and $\kappa_2$ are increasing functions, it follows that for any $n\geq m((\lambda_A - H(0) - \epsilon),x)$
	\begin{equation}\label{zeq}
	z_t \leq z_n + C_g \kappa_2 (e^{z_n})+ \int_n^t \Big[H(e^{z_s}) - (H(0) + \epsilon)\Big]ds, \quad \forall t \in [n,n+1].  
	\end{equation}
	Again since $H$ and $\kappa_2$ are increasing functions, there exists a $\delta >0$ such that 
	\[
	C_g \kappa_2(\delta) + H(\delta e^{C_g \kappa_2(\delta)}) < H(0) + \epsilon.
	\]
	Using \eqref{xqvar}, one can choose $r(x)$ such that
	\begin{equation}\label{x01}
	\|y_0\| < r(x) = \delta \exp \{\Gamma(m,x) - (\lambda_A - H(0) - \epsilon) m\} \prod_{j=0}^{m-1} \Big[ 1 + \exp \{F(\ltn x \rtn_{p-{\rm var},[j,j+1]})\}\Big]^{-1},
	\end{equation}
	so that \eqref{x01} and \eqref{xqvar} implies
	\[
	z_m = \log \|y_{m}\| + (\lambda_A - H(0) - \epsilon) m - \Gamma(m,x) < \log \delta,\quad \forall \|y_0\| < r(x).
	\]
	Because $H(\exp \{ z_m + C_g \kappa_2 (e^{z_m})\}) < H(\delta e^{C_g \kappa_2(\delta)})<H(0) + \epsilon$, it follows from the continuity in $s$ of $H(e^{z_s})$ that $H(e^{z_s}) <H(0) + \epsilon, \forall s \in [m, m+\tau)$ for some small $\tau>0$. Denote by $\tau_\infty$ the supremum of such $\tau$ and assume $\tau_\infty<1$, then the integral $\int_m^{m+\tau_\infty}[\dots]ds$ in \eqref{zeq} is negative, hence  $z_{m+\tau_\infty} <z_m + C_g \kappa_2 (e^{z_m}) < \log \delta + C_g \kappa_2(\delta)$ and $H(e^{z_{m+\tau_\infty}}) < H(\delta e^{C_g \kappa_2(\delta)}) < H(0) + \epsilon$. This means there exists $\tau_0 >\tau_\infty$ such that $H(e^{z_s}) <H(0) + \epsilon, \forall s \in [m, m+\tau_0)$ which contradicts to the definition of $\tau_\infty$. Therefore $\tau_\infty \geq 1$ and $z_t < \log  \delta + C_g \kappa_2(\delta), \forall t \in [m,m+1]$. Again \eqref{zeq} yields 
	\[
	z_t \leq z_m + C_g \kappa_2(\delta)- \Big[H(0) + \epsilon - H\Big(\delta e^{C_g \kappa_2(\delta)}\Big) \Big] (t-m), \forall t\in [m,m+1]
	\]
	and in particular
	\begin{equation}\label{zeq1}
	z_{m+1} \leq z_m - \Big[H(0) + \epsilon -H(\delta e^{C_g \kappa_2(\delta)}) - C_g\kappa_2(\delta) \Big] <  z_m < \log \delta.
	\end{equation}
	By the induction principle, \eqref{zeq1} holds for every $n\geq m$. Then for all $ t\in [n,n+1] $ with  $n \geq m$, we use \eqref{zeq1} to get
	\begin{eqnarray*}
		z_t &\leq& z_n +C_g \kappa_2(\delta)- \Big[H(0) + \epsilon- H\Big(\delta e^{C_g \kappa_2(\delta)}\Big) \Big] (t-n) \\
		&\leq& z_m- \Big[H(0) + \epsilon -H(\delta e^{C_g \kappa_2(\delta)}) - C_g \kappa_2(\delta) \Big](n-m)+ C_g \kappa_2(\delta)\\
		&&- \Big[H(0) + \epsilon - H\Big(\delta e^{ C_g \kappa_2(\delta)}\Big) \Big] (t-n)\\
		&\leq& \log \delta +  C_g \kappa_2(\delta) - \Big[H(0) + \epsilon -H(\delta e^{C_g \kappa_2(\delta)}) - C_g \kappa_2(\delta) \Big](t-m).
	\end{eqnarray*}
	 As a result,
	\begin{eqnarray*}
		&& \log \|y_t\| \\
		&\leq& \Gamma(t,x) - (\lambda_A - H(0) - \epsilon) t + \log \delta +  C_g\kappa_2(\delta) - \Big[H(0) + \epsilon -H(\delta e^{C_g \kappa_2(\delta)}) - C_g \kappa_2(\delta) \Big](t-m) \\
		&\leq&\Gamma(t,x) + \log \delta + C_g \kappa_2(\delta) + \Big[H(0) + \epsilon -H(\delta e^{C_g \kappa_2(\delta)}) - C_g \kappa_2(\delta) \Big]m- \Big[\lambda_A -H(\delta e^{C_g \kappa_2(\delta)}) - C_g \kappa_2(\delta) \Big]t
	\end{eqnarray*}
	thus
	\begin{eqnarray}
	\limsup \limits_{t\to \infty} \frac{1}{t} \log \|y_t\| &\leq& -  \Big[\lambda_A -H(\delta e^{ C_g \kappa_2(\delta)}) - C_g \kappa_2(\delta) \Big] +  E \kappa_1 (\ltn x \rtn_{p-{\rm var},[0,1]})\notag\\
	&\leq& -  \Big[H(0) + \epsilon - H(\delta e^{ C_g \kappa_2(\delta)}) -C_g \kappa_2(\delta) \Big] < 0.
	\end{eqnarray}
	In other words, by choosing $y_0$ satisfying \eqref{x01}, the zero solution is locally exponentially stable.  
	\end{proof}

\begin{lemma}\label{globalstabPro}
	Assume that there exist positive increasing functions $H,\kappa_1$ with 
	\begin{eqnarray}
	E \kappa_1 (\ltn x \rtn_{p-{\rm var},[0,1]}) &<& \infty; \label{kappaexpect}
	\end{eqnarray}
	such that $y_t$ satisfying
	\begin{equation}\label{logx20}
	\log \|y_t\| \leq \log \|y_a\| + \int_a^t [H(\|y_s\|)-\lambda_A]ds + C_g \kappa_1(\ltn x \rtn_{p-{\rm var},[a,t]}),\quad \forall a\leq t \leq a+1. 
	\end{equation}
	If $\|H\|_\infty < \lambda_A$ then there exists an $\epsilon >0$ such that for $C_g <\epsilon$, the zero solution is globally exponentially stable a.s.
\end{lemma}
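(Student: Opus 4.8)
The plan is to run the random-norm argument of Lemma~\ref{localstabPro}, exploiting that the global bound $\|H\|_\infty<\lambda_A$ makes the drift in \eqref{logx20} uniformly negative; this dispenses with both the auxiliary $\kappa_2$-term and any smallness restriction on $\|y_a\|$, so the resulting estimate holds for every initial value and yields the global, rather than merely local, conclusion.

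\textbf{Discretization and averaging.} First, bounding $H\le\|H\|_\infty$ in \eqref{logx20} on each interval $[n,n+1]$, $n\in\N$, and using that $\kappa_1$ is increasing, one gets for $t\in[n,n+1]$
\[
\log\|y_t\|\ \le\ \log\|y_n\| + C_g\,\kappa_1\!\big(\ltn x\rtn_{p-{\rm var},[n,n+1]}\big);
\]
taking $t=n+1$ and telescoping from $0$ to $n-1$ yields $\log\|y_n\|\le\log\|y_0\|+S_n$, where
\[
S_n:=-\,n(\lambda_A-\|H\|_\infty)+C_g\sum_{k=0}^{n-1}\kappa_1\!\big(\ltn x\rtn_{p-{\rm var},[k,k+1]}\big).
\]
Exactly as in the derivation of \eqref{gamma}, the integrability \eqref{kappaexpect} and the ergodic theorem give, a.s., $\tfrac1n\sum_{k=0}^{n-1}\kappa_1(\ltn x\rtn_{p-{\rm var},[k,k+1]})\to\Lambda_1:=E\,\kappa_1(\ltn x\rtn_{p-{\rm var},[0,1]})<\infty$. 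Setting $\epsilon:=(\lambda_A-\|H\|_\infty)/(1+\Lambda_1)$, for every $C_g<\epsilon$ the number $\rho:=\lambda_A-\|H\|_\infty-C_g\Lambda_1$ is positive, so $S_n/n\to-\rho<0$ and $S_n\to-\infty$; moreover the individual terms $C_g\kappa_1(\ltn x\rtn_{p-{\rm var},[n,n+1]})$ are $o(n)$ a.s., being the $n$-th terms of a nonnegative sequence whose arithmetic means converge.

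\textbf{Decay and stability.} For $t\in[n,n+1]$ the two displays give $\frac1t\log\|y_t\|\le\frac1t\log\|y_0\|+\frac1tS_n+\frac1tC_g\kappa_1(\ltn x\rtn_{p-{\rm var},[n,n+1]})$; letting $t\to\infty$ (so $n\to\infty$ and $n/t\to1$), the right-hand side converges to $-\rho$, whence $\limsup_{t\to\infty}\frac1t\log\|y_t\|\le-\rho<0$ for every $y_0$ — the exponential-decay requirement of Definition~\ref{Defstability}. For Lyapunov stability, since $S_n\to-\infty$ and the terminal term $C_g\kappa_1(\ltn x\rtn_{p-{\rm var},[n,n+1]})=o(n)$, the sequence $S_n+C_g\kappa_1(\ltn x\rtn_{p-{\rm var},[n,n+1]})$ also tends to $-\infty$, so
\[
C(\omega):=\sup_{n\ge0}\Big(S_n+C_g\,\kappa_1\!\big(\ltn x\rtn_{p-{\rm var},[n,n+1]}\big)\Big)<\infty\quad\text{a.s.}
\]
Combining with $\log\|y_n\|\le\log\|y_0\|+S_n$ and the first display, $\log\|y_t\|\le\log\|y_0\|+C(\omega)$ for all $t\ge0$; hence, given $\varepsilon>0$, the radius $r(\varepsilon,\omega):=\varepsilon\,e^{-C(\omega)}$ forces $\sup_{t\ge0}\|y_t\|<\varepsilon$ whenever $\|y_0\|<r$. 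These two properties together are exactly global exponential stability a.s.

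\textbf{Main obstacle.} The algebra above is routine; the substantive ingredients are (i) the ergodic averaging, i.e.\ that $(\ltn x\rtn_{p-{\rm var},[k,k+1]})_{k}$ is stationary and ergodic, which is inherited from the Gaussian driver exactly as in the already-established limit \eqref{gamma}; and (ii) the fact that the supremum defining $C(\omega)$ must be controlled via $S_n\to-\infty$, since $\sup_n\kappa_1(\ltn x\rtn_{p-{\rm var},[n,n+1]})$ on its own need not be finite.
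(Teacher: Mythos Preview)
Your argument is correct and follows the same route as the paper---bound $H$ by $\|H\|_\infty$, telescope the inequality over unit intervals, and invoke the ergodic theorem---though you go further by also verifying Lyapunov stability and the passage from integer to continuous time, which the paper's own proof of this lemma leaves implicit. One minor slip: your displayed bound for $t\in[n,n+1]$ already discards the negative drift $(\|H\|_\infty-\lambda_A)(t-n)$, so ``taking $t=n+1$'' in that display would not yield $S_n$ with its $-n(\lambda_A-\|H\|_\infty)$ term; when telescoping you should instead use the sharper bound $\log\|y_{n+1}\|\le\log\|y_n\|-(\lambda_A-\|H\|_\infty)+C_g\kappa_1(\ltn x\rtn_{p-{\rm var},[n,n+1]})$ that retains the drift.
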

	\begin{proof}
	We can choose $\epsilon$ such that given $C_g < \epsilon$ 
	\begin{equation}\label{eps2}
	0 < \lambda := \lambda_A - C_f - C_g E \kappa_1\Big(\ltn x \rtn_{p-{\rm var},[0,1]}\Big). 
	\end{equation}
	It follows from \eqref{logx20} that
	\[
	\log \|y_1\| \leq \log \|y_0\| -(\lambda_A -C_f) + C_g \kappa_1(\ltn x \rtn_{p-{\rm var},[0,1]}) 
	\] 
	or by induction for any $n \in \N$
	\begin{equation}\label{xn}
	\log \|y_n\| \leq \log \|y_0\| -\Big[\lambda_A -C_f- \frac{1}{n}\sum_{k=0}^{n-1} C_g \kappa_1(\ltn x \rtn_{p-{\rm var},[k,k+1]}) \Big]n.
	\end{equation}
	Using the ergodic Birkhorff theorem and \eqref{eps2}, we then get for a.s. all realization 
	\[
	\limsup \limits_{n \to \infty} \log \|y_n\| \leq \lambda_A -C_f-C-g E \kappa_1\Big(\ltn x \rtn_{p-{\rm var},[0,1]}\Big) = -\lambda <0,
	\]
	which proves the globally exponential stability of the zero solution.
\end{proof}

\begin{theorem}[Local stability for Young differential equations]\label{globalYDE}
	Assume $X_\cdot(\omega)$ is a Gaussian process satisfying \eqref{Gaussianexpect}, and $\bar{\nu}>\nu >\frac{1}{2}$ is fixed. Assume further that conditions \eqref{lambda}, \eqref{condf} are satisfied, where $\lambda_A > h(0)$. Then the zero solution of \eqref{RDE1} is locally exponentially stable for almost sure all the trajectories $x$ of $X$. If in addition $\lambda_A > C_f$, then we can choose $\epsilon$ so that the zero solution of \eqref{RDE1} is globally exponentially stable a.s. 
\end{theorem}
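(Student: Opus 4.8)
\emph{Strategy.} The plan is to reduce Theorem~\ref{globalYDE} to Lemma~\ref{localstabPro} and Lemma~\ref{globalstabPro} by showing that the solution $y$ of \eqref{RDE2} satisfies a scalar integral inequality of the shape \eqref{logx20} with $H=h$, increasing $\kappa_1,\kappa_2$, and $E\kappa_1(\ltn X\rtn_{p\text{-var},[0,1]})<\infty$. Granting this, Lemma~\ref{localstabPro} gives local exponential stability from $h(0)<\lambda_A$, and, since $\|h\|_\infty\le C_f$, under the extra hypothesis $\lambda_A>C_f$ Lemma~\ref{globalstabPro} gives global exponential stability. Fix $p,q$ with $\tfrac1p+\tfrac1q>1$, $p\nu>1$ and $\tfrac1q\le\nu$ (possible since $\nu>\tfrac12$), so that the Young calculus of Section~2.1 applies; since $0$ is an equilibrium of \eqref{RDE2}, uniqueness forces any solution with $y_a\neq0$ to stay in $\R^d\setminus\{0\}$, and $V(y):=\log\|y\|$ is smooth along it.

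First I would apply the Young chain rule (no correction term, as $\nu>\tfrac12$):
\[
\log\|y_t\|=\log\|y_a\|+\int_a^t\Big\langle\tfrac{y_u}{\|y_u\|^{2}},Ay_u+f(y_u)\Big\rangle du+\int_a^t\Big\langle\tfrac{y_u}{\|y_u\|^{2}},g(y_u)\Big\rangle dx_u.
\]
By \eqref{lambda} we have $\langle y_u/\|y_u\|^{2},Ay_u\rangle\le-\lambda_A$ and by \eqref{condf} $\langle y_u/\|y_u\|^{2},f(y_u)\rangle\le h(\|y_u\|)$, which already produces the term $\int_a^t[h(\|y_u\|)-\lambda_A]du$ of \eqref{logx20} with $H=h$. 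What remains is to control the (Young) integral $I_{a,t}:=\int_a^t\psi(y_u)\,dx_u$, where $\psi(y):=\langle y,g(y)\rangle/\|y\|^{2}$. Writing $y_u=\rho_u\theta_u$ with $\rho_u=\|y_u\|$ and $\theta_u$ on the unit sphere, and using $g(0)=0$ to factor $\psi(y)=\theta^{\mathrm{T}}M(y)\theta$ with $M(y)=\int_0^1 Dg(sy)\,ds$, the bounds $\|Dg\|_\infty\le C_g$ and $C_g$-Lipschitzness of $Dg$ give $\|\psi\|_\infty\le C_g$ and $\ltn\psi(y)\rtn_{q\text{-var},[s,t]}\le c\,C_g\big(\ltn\theta\rtn_{q\text{-var},[s,t]}+\ltn y\rtn_{q\text{-var},[s,t]}\big)$.

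The angular component obeys the projected equation $d\theta_u=\pi_{\theta_u}[A\theta_u+f(y_u)/\rho_u]\,du+\pi_{\theta_u}M(y_u)\theta_u\,dx_u$ with $\pi_\theta=I-\theta\theta^{\mathrm{T}}$, whose coefficients are bounded by $O(\|A\|+C_f)$ and $O(C_g)$; feeding the Young--Loève estimate \eqref{YL0} into the definition of the $q$-variation seminorm puts $\ltn\theta\rtn_{q\text{-var}}$ into the form \eqref{condy} of Lemma~\ref{roughy}, with $\Lambda$ of order $C_g$ (up to the a priori factor $\ltn y\rtn_{q\text{-var},[a,b]}$, finite by the existence/integrability theorem) and $\gamma(s,t)$ a control built from $|t-s|$ and $\ltn x\rtn_{p\text{-var},[s,t]}^{p}$. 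Lemma~\ref{roughy} then bounds $\ltn\theta\rtn_{q\text{-var},[a,t]}$ by a polynomial in $\ltn x\rtn_{p\text{-var},[a,t]}$, and substituting this together with $\ltn\psi(y)\rtn_{q\text{-var}}$ and the a priori estimate \eqref{xqvar} (which gives $\|y\|_{\infty,[a,a+1]},\ltn y\rtn_{q\text{-var},[a,a+1]}\le\|y_a\|\exp\{F(\ltn x\rtn_{p\text{-var},[a,a+1]})\}$) into \eqref{YL0} for $I_{a,t}$, and using Young's inequality to split off the $\|y_a\|$-dependence, yields
\[
|I_{a,t}|\le C_g\,\kappa_1\big(\ltn x\rtn_{p\text{-var},[a,t]}\big)+C_g\,\kappa_2(\|y_a\|),\qquad a\le t\le a+1,
\]
with $\kappa_2$ increasing, $\kappa_2(0^+)=0$, and $\kappa_1$ of polynomial-times-$\exp(c(\cdot)^p)$ type; this is exactly \eqref{logx20}. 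Since $p<2$, the Gaussian (Fernique) tail estimate recalled after \eqref{xqvar} gives finiteness of $E\exp\{c\ltn X\rtn_{p\text{-var},[0,1]}^{p}\}$ for every $c$, hence $E\kappa_1(\ltn X\rtn_{p\text{-var},[0,1]})<\infty$. Lemma~\ref{localstabPro} (with $H=h$, $h(0)<\lambda_A$) now produces the $\epsilon>0$ for which $C_g<\epsilon$ forces local exponential stability a.s. For the global assertion, assuming in addition $\lambda_A>C_f\ge\|h\|_\infty$, a parallel analysis — now exploiting the uniform dissipativity margin $\lambda_A-C_f>0$ to control $\|y\|_{\infty,[n,n+1]}$ and running the inequality along the unit grid — yields \eqref{logx20} in the $\kappa_2$-free form required by Lemma~\ref{globalstabPro}, which then gives global exponential stability a.s.

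The main obstacle is obtaining \eqref{logx20}: the angular/radial system is not autonomous in $\theta$ (its coefficients depend on $y=\rho\theta$, and only the factorisation $g(y)/\rho=M(y)\theta$ keeps them Lipschitz up to $\rho=0$), so closing the estimate goes through the a priori bound \eqref{xqvar}; moreover one must keep enough book-keeping on constants to see that the entire diffusion contribution is proportional to $C_g$ with its $x$-dependence confined to a single function $\kappa_1$ of finite exponential moment — this is precisely what lets the smallness of $C_g$ beat $\lambda_A-h(0)$ (respectively $\lambda_A-C_f$) in the resulting random differential inequality, and the $\kappa_2$-free form needed for the global statement is the most delicate point.
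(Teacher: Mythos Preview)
Your treatment of the local assertion follows the paper's proof closely: the chain rule for $\log\|y\|$, the factorisation $g(y)/\|y\|=M(y)\theta$ (the paper's $G(y,\theta)$), the angular equation, the use of Lemma~\ref{roughy} to bound $\ltn\theta\rtn_{q\text{-var}}$, the Young--Lo\`eve estimate for the stochastic integral, the a priori bound \eqref{xqvar} together with Cauchy's inequality to split off a $\|y_a\|^2$ term, and finally the Fernique-type moment bound to verify \eqref{kappaexpect} before invoking Lemma~\ref{localstabPro}. This part is correct and essentially identical to the paper's Steps~1--2.

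The global assertion is where you diverge from the paper, and here there is a real gap. You propose to reach Lemma~\ref{globalstabPro} by producing the $\kappa_2$-free version of \eqref{logx20}, saying only that the dissipativity margin $\lambda_A-C_f$ will ``control $\|y\|_{\infty,[n,n+1]}$''. But the obstruction to dropping $\kappa_2$ is not the drift: it is the term $\ltn y\rtn_{q\text{-var},[a,t]}$ that enters the Young estimate of the stochastic integral through $\ltn G(y,\theta)\rtn_{q\text{-var}}$ (since $M(y)=\int_0^1 Dg(\eta y)\,d\eta$ genuinely depends on $y$ for nonlinear $g$). The only available bound is \eqref{xqvar}, which contributes a factor $\|y_a\|$; this is precisely what forces the $\kappa_2(\|y_a\|)$ term, and no amount of dissipativity in the drift removes it. The paper does not attempt to obtain the $\kappa_2$-free form for general $g$ (Lemma~\ref{globalstabPro} is stated but not invoked in the proof of Theorem~\ref{globalYDE}); instead its Step~3 abandons the $\log\|y\|$ equation altogether and works with $e^{2\lambda t}\|y_t\|^2$, where the stochastic integrand $e^{2\lambda s}\langle y_s,g(y_s)\rangle$ is quadratic in $y$, so that its $q$-variation bound via \eqref{xqvar} becomes $C_g\,e^{2\lambda k}\|y_k\|^2\cdot\kappa(\ltn x\rtn)$. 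This produces a multiplicative recursion for $e^{2\lambda n}\|y_n\|^2$, closed by the discrete Gronwall lemma, and the $\|y_a\|$-dependence disappears upon taking $\tfrac1n\log$. Your outline does not supply an argument that can play this role.
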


\begin{proof}
	We summarize the ideas of the proof here for reader benefits. In {\bf Step 1} we use the integration by parts to derive the equation of $\log \|y_t\|$ in \eqref{logxg} and the equation of $\theta_t = \frac{y_t}{\|y_t\|}$ in \eqref{y}. The estimate of  $\ltn \theta \rtn_{q-{\rm var},[s,t]}$ is then given by \eqref{yest} by applying Lemma \ref{roughy}. In {\bf Step 2} we derive an estimate of $\log \|y_t\|$ in \eqref{logxh}, with the help of auxilliary polinomials $P_i, i = 1,\dots,4$ satisfying \eqref{kappa}. The conclusion of local stability is then a direct consequence of Lemma \ref{localstabPro}. In case $\lambda_A > C_f$ we prove in {\bf Step 3} that $e^{2(\lambda_A-C_f)t} \|y_t\|^2$ satisfies \eqref{normy21} and \eqref{normy213}, hence the global exponential stability is followed by applying the discrete Gronwall lemma \cite[Lemma 4]{ducGANSch18} and choosing $C_g$ according to \eqref{normy214}. \\
	
{\bf Step 1.} As proved in \cite{congduchong17}, there exists a unique solution of \eqref{RDE2} and also the backward equation. Since $y\equiv 0$ is the solution of \eqref{RDE2}, it follows that $y_t \ne 0$ for all $t\in \R$ if $y_0 \ne 0$ (otherwise there would be two solutions of the backward equation starting from $y_t$ and ending at zero and $y_0$, which is a contradiction). Then observe that
\begin{equation}\label{gx}
\frac{g(y_s)}{\|y_s\|} = \frac{g(y_s)-g(0)}{\|y_s\|} = \frac{\int_0^1 D_yg(\eta y_s) y_s d\eta}{\|y_s\|} = \int_0^1 D_y g(\eta y_s) \theta_s d\eta =: G(y_s,\theta_s),\quad \forall s\in \R;
\end{equation}
meanwhile 
\[
\|f(y_s)\| = \|f(y_s) - f(0)\| \leq h(\|y_s\|)\|y_s\|, \quad \forall s \in \R. 
\]
Using the rule of integration by parts (see \cite{zahle, zahle2}), it is easy to check that
\begin{equation}\label{logxg}
d \log \|y_t\| = \langle \theta_t, A \theta_t + \frac{f(y_t)}{\|y_t\|} \rangle dt + \langle \theta_t, G(y_t,\theta_t) \rangle dx_t,
\end{equation}
where $\theta_t$ satisfies the equation
\begin{equation}\label{y}
d \theta_t = \Big(A \theta_t + \frac{f(y_t)}{\|y_t\|} - \theta_t \langle \theta_t, A \theta_t + \frac{f(y_t)}{\|y_t\|} \rangle \Big)dt + \Big(G(y_t,\theta_t) - \theta_t \langle \theta_t, G(y_t,\theta_t) \rangle\Big)dx_t.
\end{equation} 
A direct computation using assumptions shows that $\|G(y,\theta)\|_{\infty,[a,b]} \leq C_g $ and 
\allowdisplaybreaks
\begin{eqnarray}\label{G}
\ltn G(y,\theta) \rtn_{q-{\rm var},[a,b]} &=& \ltn \int_0^1 D_y g(\eta y) \theta d\eta \rtn_{q-{\rm var},[a,b]} \notag\\
&\leq& \ltn \int_0^1 D_y g(\eta y) d\eta \rtn_{q-{\rm var},[a,b]} \|\theta\|_{\infty,a,b} + \left\| \int_0^1 D_y g(\eta y) d\eta \right \|_{\infty,[a,b]} \ltn \theta \rtn_{q{\rm - var},[a,b]} \notag \\
&\leq& C_g \Big(\ltn \theta \rtn_{q-{\rm var},[a,b]} + \frac{1}{2}\ltn y \rtn_{q-{\rm var},[a,b]}\Big).
\end{eqnarray}
it follows that
\begin{eqnarray*}
\|\theta_t - \theta_s\| &\leq& 2 \|A\|(t-s) + 2 \int_s^t h(\|y_u\|)du + 2C_g\ltn x \rtn_{p{\rm -var},[s,t]} \\
&& + K \ltn x \rtn_{p{\rm -var},[a,t]} \ltn G(y,\theta)- \theta \langle \theta, G(y,\theta)\rangle \rtn_{q{\rm - var},[s,t]} \\
&\leq& 2 \|A\|(t-s) + 2 \int_s^t h(\|y_u\|)du +2C_g\ltn x \rtn_{p-{\rm var},[s,t]}  \\
&&+ K C_g\ltn x \rtn_{p-{\rm var},[s,t]} \ltn y \rtn_{q-{\rm var},[s,t]}+ 4 KC_g \ltn x \rtn_{p-{\rm var},[s,t]}\ltn \theta \rtn_{q-{\rm var},[s,t]}.
\end{eqnarray*}
Since each of $t-s, \int_s^t h(\|y_u\|)du, \ltn x \rtn_{p-{\rm var},[s,t]} \ltn x \rtn_{q-{\rm var},[s,t]}$ is a control, the function
\[
\gamma(s,t):= 2 \|A\|(t-s) + 2 \int_s^t h(\|y_u\|)du + K C_g\ltn x \rtn_{p-{\rm var},[s,t]} \ltn y \rtn_{q-{\rm var},[s,t]}
\]
is also a control. By using triangle inequality for $q$-var seminorm with $q\geq p \geq 1$, we get for all $a \leq s < t \leq b$
\begin{eqnarray*}
&&\|\theta_t-\theta_s\| \leq \ltn \theta \rtn_{q{\rm - var},[s,t]} \notag\\
&=& \sup_{\Pi} \Big\{\sum_{[u,v] \in \Pi} \Big(\gamma(u,v) + 2 C_g\ltn x \rtn_{p-{\rm var},[u,v]}  + 4 KC_g\ltn x \rtn_{p-{\rm var},[s,t]}\ltn \theta \rtn_{q-{\rm var},[u,v]}\Big)^q \Big\}^{\frac{1}{q}} \notag \\
&\leq& \gamma(s,t) + 2C_g \ltn x \rtn_{p-{\rm var},[s,t]} + 4 KC_g\ltn x \rtn_{p-{\rm var},[s,t]}\ltn \theta \rtn_{q-{\rm var},[s,t]},
\end{eqnarray*}
which has the form of \eqref{condy} with $\Lambda([s,t]) := 2 C_g$. Applying \eqref{concludey} in Lemma \ref{roughy} we conclude that for all $a\leq s \leq t \leq b$
\begin{eqnarray}\label{yest}
\ltn \theta \rtn_{q-{\rm var},[s,t]} &\leq& 2 \gamma(s,t) + 4 C_g \ltn x \rtn_{p-{\rm var},[s,t]}+ (2K)^{p-1} (4 C_g)^p \ltn x \rtn_{p-{\rm var},[s,t]}^p \notag \\
&\leq&  4 \|A\|(t-s) + 4 \int_s^t h(\|y_u\|)du + 2 K C_g\ltn x \rtn_{p-{\rm var},[s,t]} \ltn y \rtn_{q-{\rm var},[s,t]}\notag\\
&&+ 4C_g\ltn x \rtn_{p-{\rm var},[s,t]} + (2K)^{p-1} (4C_g)^p \ltn x \rtn_{p-{\rm var},[s,t]}^p.
\end{eqnarray}

{\bf Step 2.} Next, to estimate \eqref{logxg}, we first use \eqref{YL0} and \eqref{G} to get
\begin{eqnarray*}
	\left \| \int_a^{b} \langle \theta_s, G(y_s,\theta_s) \rangle dx_s \right \|
	&\leq& 	C_g \ltn x \rtn_{p-{\rm var},[a,b]} + K\ltn x \rtn_{p-{\rm var},[a,b]} \ltn \langle \theta,G(y,\theta)\rangle \rtn_{q{\rm -var},[a,b]}\\
	&\leq&\ltn x \rtn_{p-{\rm var},[a,b]} \Big(C_g + 2KC_g \ltn \theta \rtn_{q-{var},[a,b]} + \frac{1}{2}K C_g \ltn y \rtn_{q{\rm -var},[a,b]} \Big). 
\end{eqnarray*} 
We estimate equation \eqref{logxg} in the integration form, using \eqref{yest} and \eqref{lambda}
\allowdisplaybreaks
\begin{eqnarray*}
\log \|y_t\|&\leq& \log \|y_a\| + \int_a^t [-\lambda_A + h(\|y_s\|)]ds \notag\\
&&+ \ltn x \rtn_{p{\rm -var},[a,t]} \Big( C_g + 2KC_g \ltn \theta \rtn_{q-{var},[a,b]} + \frac{1}{2}K C_g \ltn y \rtn_{q{\rm -var},[a,b]}\Big) \notag\\
&\leq& \log\|y_a\| + \int_a^t [-\lambda_A + h(\|y_s\|)]ds +  C_g\ltn x \rtn_{p{\rm -var},[a,t]} + \frac{1}{2} KC_g \ltn x \rtn_{p{\rm -var},[a,t]} \ltn y \rtn_{q{\rm - var},[a,t]} \notag \\
&& + 2KC_g \ltn x \rtn_{p{\rm -var},[a,t]} \Big\{ 4 (\|A\|+C_f)(t-a) + 2 K C_g\ltn x \rtn_{p-{\rm var},[a,t]} \ltn y \rtn_{q-{\rm var},[a,t]} \notag\\
&&\qquad \qquad \qquad \qquad \qquad \qquad + 4C_g \ltn x \rtn_{p-{\rm var},[a,t]} + (2K)^{p-1} (4C_g)^p \ltn x \rtn_{p-{\rm var},[a,t]}^p\Big\}.
\end{eqnarray*}
Writing in short $Y_{a,t} := \ltn y\rtn_{q-{\rm var},[a,t]}$ and $x_p = \ltn x \rtn_{p-{\rm var},[a,t]}$, we then get for all $0\leq a < t \leq a+1$
\begin{eqnarray}\label{logxest}
\log \|y_t\|&\leq& \log\|y_a\| + \int_a^t [-\lambda_A + h(\|y_s\|)]ds + C_gx_p  + \frac{1}{2}K C_g x_p Y_{a,t} \notag\\
&&+ 2K C_g x_p \Big\{ 4 (\|A\| + C_f)(t-a) + 2K C_g x_p Y_{a,t}+ 4C_g x_p + (2K)^{p-1} (4C_gx_p)^p\Big\} \notag\\
&\leq& \log\|y_a\| + \int_a^t [-\lambda_A + h(\|y_s\|)]ds + \Big( \frac{1}{2}KC_g x_p + 4K^2C_g^2 x_p^2\Big)Y_{a,t} \notag\\
&&+ C_g\Big[x_p + 8K(\|A\|+C_f) x_p + 8KC_g x_p^2 + (8KC_g)^p x_p^{p+1} \Big].
\end{eqnarray}
On the other hand, it follows from \eqref{xqvar} and Cauchy inequality that  
\begin{eqnarray*}
\Big( \frac{1}{2}KC_g x_p + 4K^2 C_g^2 x_p^2\Big) Y_{a,t}&\leq& \Big( \frac{1}{2}K  x_p + 4K^2 C_g x_p^2\Big) C_g\|y_a\| \exp \Big\{ F(x_p) \Big\}\\
&\leq& \frac{1}{2}C_g \|y_a\|^2 + \frac{1}{2} C_g\Big( \frac{1}{2}K x_p + 4K^2 C_g x_p^2\Big)^2 \exp \Big\{ 2F(x_p) \Big\}.
\end{eqnarray*}
In summary, we have just proved that for all $a\leq t\leq a+1$
\begin{eqnarray}\label{logxh}
\log \|y_t\| 
&\leq& \log \|y_a\| + \int_a^t \Big[-\lambda_A + h(\|y_s\|) \Big]ds + C_g P(x_p, e^{x_p}) + \frac{1}{2}C_g \|y_a\|^2
\end{eqnarray}
where $P(x_1,x_2)$ is a polynomial with positive coefficients depending on $C_g$ such that
\begin{eqnarray}\label{kappa}
P (0,x_2) = P(x_1,0) = 0.
\end{eqnarray}
Assign
\begin{eqnarray}\label{coefs}
H(z) := h(z),\qquad \kappa_1(z) := P(z,e^z),\qquad \kappa_2(z) := \frac{1}{2}z^2. 
\end{eqnarray}
Since the random variable $Z:= e^{ \ltn x\rtn_{p-{\rm var},[0,1]}}$ has finite moments of any order for $1<p<2$ and $x$ to be a realization of Gaussian stochastic process, it follows that $\kappa_1$ satisfies \eqref{kappaexpect}. Hence using $\lambda_A > h(0)$ the conclusion of local stability is therefore a direct consequence of Lemma \ref{localstabPro}. \\

{\bf Step 3.} Assume $\lambda_A > C_f$ and assign $\lambda := \lambda_A - C_f >0$, then we apply the integration by parts to get
\begin{equation*}
de^{2 \lambda t}\|y_t\|^2 = 2\lambda e^{2 \lambda t} \|y_t\|^2 dt + 2 e^{2 \lambda t} \langle y_t, A y_t + f(y_t) \rangle dt + 2 e^{2 \lambda t} \langle y_t, g(y_t) \rangle dx_t,  
\end{equation*}
or in the integral form
\begin{equation}\label{normy21}
e^{2 \lambda t} \|y_t\|^2 = \|y_0\|^2 + 2 \int_0^t e^{2 \lambda s} \Big(\lambda \|y_s\|^2 + \langle y_s, Ay_s+ f(y_s) \rangle\Big)ds + 2 \int_0^t e^{2 \lambda s} \langle y_s, g(y_s) \rangle dx_s.
\end{equation}
Using \eqref{lambda}, the first integral in \eqref{normy21} is then non-positive, thus for any $n \in \N$
\begin{eqnarray}\label{normy212}
e^{2 \lambda n} \|y_n\|^2 &\leq& \|y_0\|^2 + \sum_{k=0}^{n-1} 2\Big\|\int_k^{k+1} e^{2 \lambda s} \langle y_s, g(y_s) \rangle dx_s\Big\| \notag \\
&\leq& \|y_0\|^2 +  \sum_{k=0}^{n-1} 2\ltn x \rtn_{p{\rm - var},[k,k+1]}\Big( e^{2\lambda k} \|\langle y_k, g(y_k) \rangle\|+ K \ltn e^{2\lambda \cdot} \langle y, g(y)\rangle \rtn_{q{\rm -var},[k,k+1]}\Big).\notag\\
\end{eqnarray}
Observe that $\|\langle y_s, g(y_s)\| \leq C_g \|y_s\|^2$ and due to \eqref{xqvar}
\begin{eqnarray*}
	\ltn e^{2\lambda \cdot} \langle y, g(y)\rangle \rtn_{q{\rm -var},[s,t]}  &\leq& \ltn e^{2\lambda \cdot} \rtn_{q{\rm -var},[s,t]} C_g\|y\|_{\infty,[s,t]}^2 + 2e^{2\lambda t} C_g \|y\|_{\infty,[s,t]} \ltn y \rtn_{q{\rm -var},[s,t]} \\
	&\leq& \Big(e^{2\lambda t} - e^{2\lambda s}\Big) C_g \Big[1+ \exp \{F(\ltn x \rtn_{q{\rm -var},[s,t]})\}\Big]^2 \|y_s\|^2  \\
	&&+ 2e^{2\lambda t} C_g \Big[1+ \exp \{F(\ltn x \rtn_{q{\rm -var},[s,t]})\}\Big] \exp \{F(\ltn x \rtn_{q{\rm -var},[s,t]})\} \|y_s\|^2 \\
	&\leq& C_g e^{2\lambda s} \|y_s\|^2 \Big\{ \Big(e^{2\lambda (t-s)} - 1\Big) \Big[1+ \exp \{F(\ltn x \rtn_{q{\rm -var},[s,t]})\}\Big]^2 \\
	&&+ 2e^{2\lambda (t-s)} \Big[1+ \exp \{F(\ltn x \rtn_{q{\rm -var},[s,t]})\}\Big] \exp \{F(\ltn x \rtn_{q{\rm -var},[s,t]})\} \Big\}\\
	&\leq& C_g e^{2\lambda s} \|y_s\|^2 \kappa(t-s,\ltn x \rtn_{q{\rm -var},[s,t]} ),
\end{eqnarray*}
where
\[
\kappa(u,v) := (e^{2\lambda u}-1) [1+ e^{F(v)}]^2 + 2 e^{2\lambda u} [1+e^{F(v)}]e^{F(v)}.
\]
Hence it follows from \eqref{normy212} that
\begin{eqnarray}\label{normy213}
e^{2 \lambda n} \|y_n\|^2 &\leq& \|y_0\|^2 +  \sum_{k=0}^{n-1} 2C_g\ltn x \rtn_{p{\rm - var},[k,k+1]} \Big[\kappa(1,\ltn x \rtn_{p{\rm -var},[k,k+1]})+1\Big] e^{2\lambda k} \|y_k\|^2. 
\end{eqnarray} 
Applying the discrete Gronwall lemma in \cite[Lemma 4]{ducGANSch18} for the sequence $e^{2 \lambda n} \|y_n\|^2$ with parameters $2C_g\ltn x \rtn_{p{\rm - var},[k,k+1]} \Big[\kappa(1,\ltn x \rtn_{p{\rm -var},[k,k+1]})+1\Big]$ in \eqref{normy213}, we get
\[
e^{2 \lambda n} \|y_n\|^2 \leq \|y_0\|^2 \prod_{k=0}^{n-1} \Big(1+2C_g\ltn x \rtn_{p{\rm - var},[k,k+1]} \Big[\kappa(1,\ltn x \rtn_{p{\rm -var},[k,k+1]})+1\Big]\Big).
\]
Taking the logarithm on both sides, then dividing by $2n$ and letting $n$ tend to infinity we get, due to the inequality $\log(1+r) \leq r, \forall r>0$ and the ergodic Birkhorff theorem, that
\begin{eqnarray*}
\limsup \limits_{n \to \infty} \frac{1}{n} \log \|y_n\| &\leq& -\lambda + \lim \limits_{n \to \infty} \frac{1}{2n} \sum_{k =0}^{n-1} \log \Big(1+2C_g\ltn x \rtn_{p{\rm - var},[k,k+1]} \Big[\kappa(1,\ltn x \rtn_{p{\rm -var},[k,k+1]})+1\Big]\Big) \notag\\
&\leq& -\lambda + \lim \limits_{n \to \infty} \frac{1}{n} \sum_{k =0}^{n-1} C_g\ltn x \rtn_{p{\rm - var},[k,k+1]} \Big[\kappa(1,\ltn x \rtn_{p{\rm -var},[k,k+1]})+1\Big] \notag\\
&\leq& -\lambda + C_g E \ltn x \rtn_{p{\rm - var},[0,1]} \Big[\kappa(1,\ltn x \rtn_{p{\rm -var},[0,1]})+1\Big],
\end{eqnarray*}
where the expectation $E \ltn x \rtn_{p{\rm - var},[0,1]} \Big[\kappa(1,\ltn x \rtn_{p{\rm -var},[0,1]})+1\Big]$ is finite due to the fact that $E e^{\Lambda F(\ltn x \rtn_{p{\rm -var},[0,1]})}$ is finite for any $\Lambda >0$. Finally, for any $t \in [n,n+1]$, we use \eqref{xqvar} to get
\begin{eqnarray}\label{Lyaexp}
\limsup \limits_{t \to \infty} \frac{1}{t} \log \|y_t\| &\leq& \limsup \limits_{n \to \infty} \frac{1}{n} \log \|y_n\| + \limsup \limits_{n \to \infty} \frac{1}{n} \log [1 + \exp\{F(\ltn x \rtn_{p{\rm -var},[n,n+1]})\}] \notag\\
&\leq& -\lambda + C_g E \ltn x \rtn_{p{\rm - var},[0,1]} \Big[\kappa(1,\ltn x \rtn_{p{\rm -var},[0,1]})+1\Big] \notag\\
&&+ \limsup \limits_{n \to \infty} \frac{1}{n} [1 + F(\ltn x \rtn_{p{\rm -var},[n,n+1]})] \notag\\
&\leq& -\lambda + C_g E \ltn x \rtn_{p{\rm - var},[0,1]} \Big[\kappa(1,\ltn x \rtn_{p{\rm -var},[0,1]})+1\Big],
\end{eqnarray}
where the second limsup in the right hand side of \eqref{Lyaexp} is zero due to the integrability of $F(\ltn x \rtn_{p{\rm -var},[0,1]})$. Hence if we choose 
\begin{equation}\label{normy214}
C_g < \epsilon < (\lambda_A - C_f)\frac{1}{E \ltn x \rtn_{p{\rm - var},[0,1]} \Big[\kappa(1,\ltn x \rtn_{p{\rm -var},[0,1]})+1\Big]},
\end{equation}
then the zero solution is globally exponentially stable a.s.
\end{proof}

\begin{corollary}
	Assume that the linear Young differential equation
	\begin{equation}\label{linYDE}
	dy_t = Ay_t dt + C y_t dx 
	\end{equation}
	satisfies \eqref{lambda}. Then a criterion for the globally exponential stability is
	\begin{equation}\label{stablin}
	\lambda_A >  \Big (1 + 4K\|A\| + 8K + (8K)^p\Big)\|C\| \Big(E\ltn x \rtn_{p{\rm -var},[0,1]}^{p+1}\Big)^{\frac{1}{p+1}}
	\end{equation}
	
\end{corollary}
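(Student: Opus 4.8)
The plan is to re-run the proof of Theorem~\ref{globalYDE} for the linear equation \eqref{linYDE}, i.e.\ with $f\equiv0$, keeping every constant explicit. The decisive simplification is that for $g(y)=Cy$ one has $G(y_s,\theta_s)=\int_0^1 D_yg(\eta y_s)\theta_s\,d\eta=C\theta_s$, so in the $q$-variation bound for $G(y,\theta)$ the term $\tfrac12\ltn y\rtn_{q-{\rm var}}$ disappears and only $\|C\|\,\ltn\theta\rtn_{q-{\rm var}}$ survives; likewise $h\equiv0$ and $C_f=0$. Hence the Cauchy-inequality detour of Step~2 (and the resulting $\kappa_2(\|y_a\|)$ term) is unnecessary, and one lands directly in the hypotheses of Lemma~\ref{globalstabPro} with $H\equiv0$ (so $\|H\|_\infty=0<\lambda_A$ automatically) and with the role of $C_g$ played by $\|C\|$.

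Concretely, I would first recall from Step~1 of the proof of Theorem~\ref{globalYDE} that for $y_0\neq0$ the solution never vanishes, that $\theta_t=y_t/\|y_t\|$ solves \eqref{y} with $G(y_t,\theta_t)=C\theta_t$, and that $\log\|y_t\|$ obeys \eqref{logxg}. Using $\|\theta_s\|=1$, $\langle\theta_s,A\theta_s\rangle\le-\lambda_A$, and that $A\theta-\theta\langle\theta,A\theta\rangle$ and $C\theta-\theta\langle\theta,C\theta\rangle$ are the orthogonal projections of $A\theta$, $C\theta$ onto $\theta^\perp$ (hence of norm $\le\|A\|$ and $\le\|C\|$ respectively), the Young--Loeve estimate \eqref{YL0} applied to \eqref{y} gives, for $s\le t$ in a unit interval,
\[
\ltn\theta\rtn_{q-{\rm var},[s,t]}\ \le\ \|A\|(t-s)+2\|C\|\,\ltn x\rtn_{p-{\rm var},[s,t]}+4K\|C\|\,\ltn x\rtn_{p-{\rm var},[s,t]}\ltn\theta\rtn_{q-{\rm var},[s,t]},
\]
which is of the form \eqref{condy} with $\gamma(s,t)=\|A\|(t-s)$ and $\Lambda=2\|C\|$. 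Applying Lemma~\ref{roughy} and inserting the resulting bound on $\ltn\theta\rtn_{q-{\rm var},[a,t]}$ into the Young--Loeve estimate of $\int_a^t\langle\theta_s,C\theta_s\rangle\,dx_s$ (bounding also $\|\langle\theta_a,C\theta_a\rangle\|\le\|C\|$), I expect to reach, for $a\le t\le a+1$,
\[
\log\|y_t\|\ \le\ \log\|y_a\|-\lambda_A(t-a)+\|C\|\,\kappa_1\!\big(\ltn x\rtn_{p-{\rm var},[a,t]}\big),\qquad
\kappa_1(v)=(1+4K\|A\|)v+8K\|C\|\,v^2+(8K)^p\|C\|^pv^{p+1}.
\]

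With this inequality the rest is the argument of Lemma~\ref{globalstabPro}: iterate over consecutive unit intervals, divide by $n$, and invoke Birkhoff's ergodic theorem together with $E\ltn x\rtn_{p-{\rm var},[0,1]}^{p+1}<\infty$ (which follows from the finiteness of $E e^{\Lambda\ltn x\rtn_{p-{\rm var},[0,1]}}$ for Gaussian $x$, $1<p<2$) to get $\limsup_{n\to\infty}\tfrac1n\log\|y_n\|\le-\lambda_A+\|C\|\,E\kappa_1(\ltn x\rtn_{p-{\rm var},[0,1]})$, uniformly in $y_0\neq0$; the passage from integer times to all $t$ uses \eqref{xqvar}, the extra term contributing $0$ to the $\limsup$ by the integrability of $\ltn x\rtn_{p-{\rm var},[0,1]}^p$. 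Finally, bounding moments by Lyapunov's inequality, $E\ltn x\rtn_{p-{\rm var},[0,1]}^k\le M^k$ for $k=1,2,p+1$ with $M:=\big(E\ltn x\rtn_{p-{\rm var},[0,1]}^{p+1}\big)^{1/(p+1)}$, one gets
\[
\|C\|\,E\kappa_1\!\big(\ltn x\rtn_{p-{\rm var},[0,1]}\big)\ \le\ \|C\|M\big[(1+4K\|A\|)+8K\|C\|M+(8K)^p(\|C\|M)^p\big],
\]
which, provided $\|C\|M\le1$ (automatic whenever $\lambda_A\le1+4K\|A\|+8K+(8K)^p$, and in any event the natural small-noise normalization), is $\le\big(1+4K\|A\|+8K+(8K)^p\big)\|C\|M$. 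Thus \eqref{stablin} forces $\limsup_{t\to\infty}\tfrac1t\log\|y_t\|<0$ a.s., and since the same estimate bounds $\sup_{t\ge a}\|y_t\|$ by $C(\omega)\|y_a\|$ with $C(\omega)<\infty$ a.s., this is exponential stability in the sense of Definition~\ref{Defstability}. The one genuine hazard is the numerology: one must use the sharp projection bound $\|A\theta-\theta\langle\theta,A\theta\rangle\|\le\|A\|$ (rather than the cruder $2\|A\|$ of Step~1 of Theorem~\ref{globalYDE}) to land the coefficient $4K\|A\|$, and the power-collapse in the last display must be justified rather than waved through.
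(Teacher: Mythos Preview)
Your approach is essentially the same as the paper's: specialize Steps~1--2 of Theorem~\ref{globalYDE} to the linear case (so $G(y,\theta)=C\theta$, $h\equiv0$, and the $Y_{a,t}$ terms vanish), obtain an explicit bound on $\log\|y_{k+1}\|-\log\|y_k\|$, iterate via Birkhoff, and then collapse the three moment terms using $\tilde C:=\|C\|M\le1$. The paper's proof does exactly this, citing \eqref{logxest} with the $Y_{a,t}$ terms removed and then arguing $\tilde C<1$ from $\lambda_A\le\|A\|$.

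You are in fact more careful than the paper on the one point that matters for the constants. The paper's \eqref{logxest} as written carries the coefficient $8K(\|A\|+C_f)$, which in the linear case would give $8K\|A\|$, not the $4K\|A\|$ stated in \eqref{stablin}; the paper's corollary proof silently uses the sharper bound without comment. Your explicit use of the orthogonal-projection estimate $\|A\theta-\theta\langle\theta,A\theta\rangle\|\le\|A\|$ (yielding $\gamma(s,t)=\|A\|(t-s)$ rather than $2\|A\|(t-s)$ before Lemma~\ref{roughy} doubles it) is exactly what closes this gap. For the final reduction, the cleanest way to justify $\tilde C<1$ is the paper's: \eqref{stablin} together with $\lambda_A\le\|A\|<1+4K\|A\|+8K+(8K)^p$ forces $\tilde C<1$, whence $\tilde C^2,\tilde C^{p+1}\le\tilde C$ and \eqref{AC} follows.
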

\begin{proof}
For $h(\cdot) \equiv 0, g(y)  = Cy$, there is no term $Y_{a,t}$ in \eqref{logxest}, hence it follows from \eqref{logxh} that
\begin{eqnarray*}
\log \|y_{k+1}\|&\leq& \log \|y_k\| - \lambda_A + (1 + 4K\|A\|)\|C\| \ltn x \rtn_{p{\rm -var},[k,k+1]}+ 8K \|C\|^2 \ltn x \rtn_{p{\rm -var},[k,k+1]}^2 \\
&&+ (8K)^p\|C\|^{p+1} \ltn x \rtn_{p{\rm -var},[k,k+1]}^{p+1}.
\end{eqnarray*}	
As a result
\begin{eqnarray*}
\limsup \limits_{n \to \infty} \frac{1}{n}\log \|y_n\| &\leq& - \lambda_A + (1 + 4K\|A\|)\|C\| E \ltn x \rtn_{p{\rm -var},[0,1]}+ 8K \|C\|^2 E \ltn x \rtn_{p{\rm -var},[0,1]}^2 \notag\\
&&+ (8K)^p\|C\|^{p+1} E\ltn x \rtn_{p{\rm -var},[0,1]}^{p+1},\\
&\leq& - \lambda_A + (1 + 4K\|A\|)\|C\| \Big(E\ltn x \rtn_{p{\rm -var},[0,1]}^{p+1}\Big)^{\frac{1}{p+1}}+ 8K \|C\|^2 \Big(E\ltn x \rtn_{p{\rm -var},[0,1]}^{p+1}\Big)^{\frac{2}{p+1}} \notag\\
&&+ (8K)^p\|C\|^{p+1} E\ltn x \rtn_{p{\rm -var},[0,1]}^{p+1}.
\end{eqnarray*}
Assign $\tilde{C} := \|C\|\Big(E\ltn x \rtn_{p{\rm -var},[0,1]}^{p+1}\Big)^{\frac{1}{p+1}}$, then system \eqref{linYDE} is exponentially stable if
\begin{equation}\label{AC}
\lambda_A > (1 + 4K\|A\|)\tilde{C}+ 8K \tilde{C}^2 + (8K)^p\tilde{C}^{p+1},
\end{equation}
which, together with the fact that $\lambda_A < \|A\|$ and $K>1$, implies that $\tilde{C} <1$. In that case \eqref{AC} is followed from \eqref{stablin}.
\end{proof}	

%%%%%%%%%%%%%%%%%%%%%%%%%%%%%%%%%%%%%%%%%%%%%%%%%%%%%%

\subsection{Case 2. $\nu \in (\frac{1}{3},\frac{1}{2})$ and $g(y) = Cy$}

In this section we consider a particular rough case in which $g(y) = Cy$. We could then prove the same conclusions on stability, and even a general form of local stability. 

\begin{theorem}[Local stability for rough differential equations]\label{stabYDE1}
	Assume $\frac{1}{2}>\bar{\nu}> \nu >\frac{1}{3}$ and $X_\cdot(\omega)$ is a stationary process satisfying \eqref{Gaussianexpect}. Assume further that conditions \eqref{lambda}, \eqref{condf} are satisfied, where $g(y) = C y$ and $\lambda > h(0)$. Then there exists an $\epsilon >0$ such that given $\|C\| < \epsilon$, the zero solution of \eqref{RDE1} is locally exponentially stable for almost all realization $x$ of $X$. If in addition $\lambda > C_f$, then we can choose $\epsilon$ so that the zero solution of \eqref{RDE1} is globally exponentially stable. 
\end{theorem}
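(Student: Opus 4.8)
The plan is to reproduce the three-step architecture of the proof of Theorem~\ref{globalYDE}, substituting the rough counterpart of each ingredient: the rule of integration by parts is replaced by the change of variables formula \eqref{Roughformula}, the Young--Loève estimate \eqref{YL0} by the rough estimate \eqref{roughEst}, and the $q$-variation a priori bound \eqref{xqvar} by the solution estimate \eqref{ysupest} together with the greedy-time counting bounds for $N_{\gamma,I,\alpha}$ and $\bar N_{\gamma,I,\alpha}$ derived earlier. The essential simplification from the hypothesis $g(y)=Cy$ is that $g(y_s)/\|y_s\|=C\theta_s$ with $\theta_s:=y_s/\|y_s\|$, so the angular equation closes in $\theta$ alone; every controlled-path quantity attached to $\theta$ is then bounded by polynomials in $\|C\|$ and the local rough-path norm, with no dependence on $\|y\|$. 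In particular one never needs the integrability of $\ltn\theta,\theta^\prime\rtn_{x,2\alpha,[a,b]}$, only the first moment of $\bar N_{\gamma,[0,1],\alpha}(\bx)$, which is finite by the elementary counting bound derived above together with the Gaussian moment estimates for $\ltn x\rtn_\nu,\ltn\X\rtn_{2\nu}$ (or by \cite[Theorem~6.3]{cassetal}).

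First, as in the Young case, uniqueness for \eqref{RDE1} and its backward equation forces $y_t\neq0$ whenever $y_0\neq0$, so on every compact interval $y$ stays in a set $\{0<m_1\leq\|y\|\leq M_1\}$; I would localize $V(y)=\log\|y\|$ (and, for the global step, $V(y)=\|y\|^2$) to a function in $C^3_b(\R^d,\R)$ agreeing with it there, and then apply the change of variables formula. Using $D_yV(y)=y/\|y\|^2$, the bounds $\langle D_yV(y),Ay\rangle\leq-\lambda_A$ and $\langle D_yV(y),f(y)\rangle\leq h(\|y\|)$ coming from \eqref{lambda}, \eqref{condf}, and the identity $\langle D_yV(y),Cy\rangle=\langle\theta,C\theta\rangle$, one obtains
\[
\log\|y_t\|\leq\log\|y_a\|+\int_a^t\bigl[h(\|y_s\|)-\lambda_A\bigr]\,ds+\int_a^t\langle\theta_u,C\theta_u\rangle\,dx_u+\tfrac12\int_a^t D_{yy}V(y_u)[Cy_u,Cy_u]\,d[x]_{a,u},
\]
the last term a Young integral whose integrand is bounded by a multiple of $\|C\|^2$. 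Applying the change of variables formula to $\phi(y)=y/\|y\|$ shows that $\theta$ solves a rough equation with bounded Lipschitz drift (Lipschitz constant $\lesssim\|A\|+C_f$) and diffusion field $\sigma(\theta)=C\theta-\theta\langle\theta,C\theta\rangle$, hence $(\theta,\sigma(\theta))\in\cD^{2\alpha}_x$. On an interval between two consecutive greedy times \eqref{greedywitht}, where $(v-u)^{1-2\alpha}+\ltn\bx\rtn_{\alpha,[u,v]}$ is small, the contraction estimate used in the existence proof for $g(y)=Cy$ yields $\ltn\theta,\sigma(\theta)\rtn_{x,2\alpha,[u,v]}\lesssim(\|A\|+C_f+\|C\|)\cdot(\text{that small quantity})$, and the Leibniz rule for controlled paths then bounds $\ltn\langle\theta,C\theta\rangle^\prime\rtn_\alpha$ and $\ltn R^{\langle\theta,C\theta\rangle}\rtn_{2\alpha}$ by a multiple of $\|C\|$ on each such piece.

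For local stability I would fix $\gamma$, split $[a,t]\subset[a,a+1]$ along the greedy times $\bar\tau_i=\bar\tau_i(\gamma,[a,a+1],\alpha)$, bound each $\int_{\bar\tau_i}^{\bar\tau_{i+1}}\langle\theta,C\theta\rangle\,dx$ by \eqref{roughEst} plus its main terms $\langle\theta_{\bar\tau_i},C\theta_{\bar\tau_i}\rangle x_{\bar\tau_i,\bar\tau_{i+1}}+\langle\theta,C\theta\rangle^\prime_{\bar\tau_i}\X_{\bar\tau_i,\bar\tau_{i+1}}$, and sum; since $\|\theta\|\equiv1$ and $\ltn\bx\rtn_\alpha\leq\gamma$ on each piece, the total is at most $\|C\|$ times a quantity linear in $\bar N_{\gamma,[a,a+1],\alpha}(\bx)$ and polynomial in $\gamma$, and the Young correction is handled likewise by \eqref{YL0}. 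This gives exactly hypothesis \eqref{logx20} of Lemma~\ref{localstabPro}, with $C_g$ a polynomial in $\|C\|$ vanishing at $0$, $H=h$, $\kappa_2\equiv0$, and $\kappa_1$ linear in $\bar N_{\gamma,[0,1],\alpha}$ (hence $E\kappa_1<\infty$); since $h(0)<\lambda_A$, Lemma~\ref{localstabPro} — whose proof uses only this first moment and the ergodic averaging supplied by the stationarity of $X$, and is indifferent to whether the relevant functional is $\ltn x\rtn_{p-{\rm var}}$ or $\kappa_1$ — yields local exponential stability. If $\lambda_A>C_f$, set $\lambda:=\lambda_A-C_f>0$; applying the change of variables formula to $e^{2\lambda t}\|y_t\|^2$ makes the drift integrand $2e^{2\lambda s}(\lambda-\lambda_A+C_f)\|y_s\|^2\leq0$, so over $[k,k+1]$ only the rough integral $2\int e^{2\lambda s}\langle y_s,Cy_s\rangle\,dx_s$ and a Young correction survive; estimating them by \eqref{roughEst} and \eqref{YL0}, with $\langle y,Cy\rangle$ controlled and $\ltn y,y^\prime\rtn_{x,2\alpha,[k,k+1]}$ bounded by $\|y_k\|\exp\{c\,\bar N_{\gamma,[k,k+1],\alpha}(\bx)\}$ from the a priori seminorm estimate established above, gives $e^{2\lambda(k+1)}\|y_{k+1}\|^2\leq(1+\|C\|\,Q_k)e^{2\lambda k}\|y_k\|^2$ for a stationary sequence $Q_k$ with $E\log^+Q_0<\infty$. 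The discrete Gronwall lemma \cite[Lemma~4]{ducGANSch18} then gives $e^{2\lambda n}\|y_n\|^2\leq\|y_0\|^2\prod_{k<n}(1+\|C\|Q_k)$, the Birkhoff ergodic theorem yields $\limsup_n\tfrac1n\log\|y_n\|\leq-\lambda+\tfrac12 E\log(1+\|C\|Q_0)$, and by monotone convergence the last term tends to $0$ as $\|C\|\to0$; choosing $\|C\|$ small and passing from $n$ to $t\in[n,n+1]$ via \eqref{ysupest} finishes the global statement.

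The one genuinely delicate point is the controlled-path bookkeeping in the first step: one must check that the unbounded part $A\theta+f(y)/\|y\|$ of the drift contributes only to the remainder $R^\theta$, so that, being $O(\text{length}^{1-2\alpha})$, it is harmless on short intervals, and that every term carrying a factor $C$ really does vanish as $\|C\|\to0$, since it is precisely this smallness that Lemma~\ref{localstabPro} (for the local conclusion) and the choice of $\|C\|$ in the final $\limsup$ bound (for the global one) consume. The localization of $\log\|\cdot\|$ and $\|\cdot\|^2$ to $C^3_b$ functions, although routine, must be carried out before invoking the change of variables formula, and one should verify that it does not interfere with identities such as $\langle D_yV(y),Cy\rangle=\langle\theta,C\theta\rangle$ on the range of $y$.
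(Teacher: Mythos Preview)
Your proposal is correct and shares the paper's overall architecture: derive the rough equations for $\log\|y_t\|$ and $\theta_t=y_t/\|y_t\|$ via the change of variables formula \eqref{Roughformula}, exploit that for $g(y)=Cy$ the angular equation closes in $\theta$ alone, bound the rough and Young integrals in the log equation, and feed the resulting inequality into Lemma~\ref{localstabPro}. Two implementation choices differ. For the controlled-path bound on $\theta$, you split $[a,t]$ along the greedy times $\bar\tau_i$ and sum, obtaining a $\kappa_1$ linear in $\bar N_{\gamma,[0,1],\alpha}(\bx)$; the paper instead packages the same stopping-time idea into Proposition~\ref{roughGronwall}, which delivers a direct polynomial bound on $\ltn\theta,\theta'\rtn_{x,2\alpha,[a,b]}$ in the $\nu$-H\"older norms of $(x,\X,[x])$, and then estimates the two integrals over the whole interval at once via \eqref{logxyoung} and \eqref{logxrough1}. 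Both routes yield the required first-moment integrability (yours through the counting bound $\bar N\lesssim(\ltn x\rtn_\nu+\ltn\X\rtn_{2\nu}^{1/2})^{1/(\nu-\alpha)}$ and Gaussian moments, the paper's through the same moments directly). For the global conclusion under $\lambda_A>C_f$, you reproduce Step~3 of the Young proof via $e^{2\lambda t}\|y_t\|^2$ and the discrete Gronwall lemma, which forces you to control $\ltn e^{2\lambda\cdot}\langle y,Cy\rangle,(\cdot)'\rtn_{x,2\alpha}$ through the a~priori estimate on $\ltn y,y'\rtn_{x,2\alpha}$; the paper takes a shorter path, observing that the derived log-inequality \eqref{logx3} already has no $\kappa_2(\|y_a\|)$ term (precisely because the angular equation closes), so Lemma~\ref{globalstabPro} applies directly once $\|H\|_\infty<\lambda_A$ for small $\|C\|$. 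Your global route is valid, but the paper's avoids the extra controlled-path bookkeeping.
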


\begin{proof}
	We sketch out the proof here in several steps. In {\bf Step 1}, we derive the equation for $\log \|y_t\|$ in \eqref{logx}, and the equation for $\theta = \frac{y}{\|y\|}$ in \eqref{RDEangle}. Notice that for Gaussian geometric rough path, then $[x]_{\cdot,\cdot} = 0$, but we still compute the estimates here for general rough paths. As such the estimate for $\ltn \theta, \theta^\prime \rtn_{x,2\alpha,[a,b]}$ is proved by Proposition \ref{roughGronwall} which, due to $G(y)= Cy$, does not include $\ltn y, y^\prime \rtn_{x,2\alpha,[a,b]}$, hence we do not need the integrability of $\ltn y, y^\prime \rtn_{x,2\alpha,[a,b]}$. The estimate for $\log \|y_t\|$ is then derived in \eqref{logx2} in {\bf Step 2}, where each component is computed so that finally $\log \|y_t\|$ satisfies \eqref{logx3}. The conclusion is then followed from Proposition \ref{roughGronwall} and Theorem \ref{globalYDE}.  \\
	
	{\bf Step 1.} We use similar arguments in \cite{duchongcong18} to prove that the solution of the pathwise solution of the linear rough differential equation \eqref{RDE1} generates a linear rough flow on $\R^d$, and that $y_t = 0$ iff $y_0=0$. Hence it remains to prove all the formula for $\theta_t$ and $r_t$. By direct computations using \eqref{Roughformula}, we can show the following equations. 
	\begin{itemize}
		\item $\|y_t\|^2$ satisfies the RDE
		\begin{equation*}
		d\|y_t\|^2 = 2 \langle y_t, Ay_t + f(y_t) \rangle dt + 2 \langle y_t, Cy_t\rangle dx_t + \|C y_t\|^2 d[x]_{0,t},  
		\end{equation*}
		where $2\langle y,Cy \rangle^\prime_s = 2 \langle y^\prime_s, Cy_s \rangle + 2 \langle y_s, [Cy]^\prime_s \rangle$.
		\item $\|y_t\|$ satisfies the RDE
		\begin{eqnarray*}
		d\|y_t\| &=& \frac{1}{\|y_t\|} \langle y_t, Ay_t + f(y_t)\rangle dt + \frac{1}{\|y_t\|} \langle y_t, Cy_t \rangle dx_t \\
		&& + \frac{1}{2\|y_t\|}\Big[\|Cy_t\|^2 -\frac{1}{\|y_t\|^2} \langle y_t, Cy_t \rangle^2\Big]d[x]_{0,t},  
		\end{eqnarray*}
		where $\Big[\frac{1}{\|y\|}\langle y,Cy \rangle\Big] ^\prime_s = \Big[ \frac{1}{\|y\|}\Big]^\prime_s \langle y_s,Cy_s \rangle + \frac{1}{\|y_s\|} \Big[\langle y,Cy\rangle\Big]^\prime_s$.
		\item $\log \|y_t\|$ satisfies the RDE
		\begin{eqnarray}\label{logx}
		d\log \|y_t\| &=& \langle \theta_t, A\theta_t + \frac{f(y_t)}{\|y_t\|}\rangle dt +  \langle \theta_t, C\theta_t\rangle dx_t + \Big[\frac{1}{2}\|C\theta_t\|^2 - \langle \theta_t, C\theta_t \rangle^2\Big]d[x]_{0,t}, 
		\end{eqnarray}
		where $\Big[\langle \theta, C\theta \rangle\Big]^\prime_s =  \langle \theta^\prime_s,C\theta_s \rangle + \langle \theta_s, [C\theta]^\prime_s \rangle$.
		\item $\theta_t$ satisfies the RDE
		\begin{eqnarray}\label{RDEangle}
		d\theta_t&=& \Big[A \theta_t -  \langle \theta_t, A\theta_t \rangle \theta_t + \frac{f(y_t)}{\|y_t\|} - \langle \theta_t,\frac{f(y_t)}{\|y_t\|} \rangle \theta_t \Big]dt + \Big[C\theta_t -  \langle \theta_t, C\theta_t \rangle \theta_t \Big] dx_t \notag\\
		&&+ \frac{1}{2}\Big\{ 3\langle \theta_t, C\theta_t\rangle^2 \theta_t- 2 \langle \theta_t, C\theta_t\rangle C\theta_t  - \|C\theta_t\|^2\theta_t  \Big\}d[x]_{0,t},
		\end{eqnarray}
		where 
		\[
		\Big[C\theta - \langle \theta,C\theta \rangle \theta\Big]^\prime_s = [C\theta]^\prime_s - \langle \theta_s,C\theta_s\rangle \theta^\prime_s - \Big[\langle \theta_s,C\theta_s\rangle \Big]^\prime_s \theta_s.
		\]
	\end{itemize}
Rewrite \eqref{RDEangle} in the form
\begin{equation}\label{nonlinRDE1}
d\theta_t = f_1(t,\theta_t)dt + g_1(\theta_t)dx_t + k_1(\theta_t)d[x]_{a,t},\qquad t \in [a,b]
\end{equation}
or in the integral form
\begin{equation*}\label{nonlinRDE2}
\theta_t = F(\theta,\theta^\prime)_t = \theta_a + \int_a^t f_1(u,\theta_u)du + \int_a^t g_1(\theta_u)dx_u + \int_a^t k_1(\theta_u)d[x]_{a,u},\qquad \forall 0 \leq a \leq t \leq b;
\end{equation*}
where $g \in C^2$ such that there exist 
\[
C_{g_1} := \max \Big\{\|g_1(\theta)\|_{\infty,[0,T]}, \|D_\theta g_1(\theta)\|_{\infty,[0,T]}, \|D_{\theta\theta} g_1(\theta)\|_{\infty,[0,T]} \Big\}< \infty;
\]
$k_1$ is Lipschitz continuous with Lipschitz constant such that
\[
C_{k_1}:= \|k_1(\theta)\|_{\infty,[0,T]} \vee {\rm Lip}(k_1) < \infty.
\]
We can prove the following estimate (see the proof in the Appendix).

\begin{proposition}\label{roughGronwall}
	There exist a generic constant $P= P(b-a,\nu-\alpha)$ such that for all $0 \leq a \leq b\leq a+1$,
	\begin{eqnarray}\label{alphanorm1}
	&&\max \Big\{\ltn (\theta,\theta^\prime) \rtn_{x,2\alpha,[a,b]}, \ltn (\theta,\theta^\prime) \rtn_{x,2\alpha,[a,b]}^2, \ltn (\theta,\theta^\prime) \rtn_{x,2\alpha,[a,b]}^4 \Big\}  \notag\\
	&\leq&  P(b-a)M^{\frac{4}{\nu-\alpha}} \Big[1+ \Big(\ltn x \rtn_{\nu,[a,b]} +  \ltn \X \rtn_{2\nu,\Delta^2([a,b])} +  \ltn [x]\rtn_{2\nu,\Delta([a,b])}\Big)^\frac{8}{\nu-\alpha} \Big]
	\end{eqnarray} 
	where 
	\begin{eqnarray}\label{Mlinear}
	M := \max \Big\{C_{f_1}, C_{g_1}^2(1 + C_\alpha), C_{k_1}(1+ K_\alpha), C_{g_1} (C_\alpha + 1),\frac{1}{2}\Big \} 
	\end{eqnarray}
\end{proposition}

	{\bf Step 2.} It is now sufficient to estimate the quantity in \eqref{logx}. For any $0\leq a \leq t \leq 1$, rewrite \eqref{logx} in the integral form
	\allowdisplaybreaks
	\begin{eqnarray}\label{logx2}
	\log \|y_t\| 
	&=& \log \|y_a\| + \int_a^t \langle y_s, Ay_s + \frac{f(y_s)}{\|y_s\|}\rangle ds +  \int_a^t \langle \theta_s, C\theta_s \rangle dx_s  + \int_a^t \Big[\frac{1}{2}\|C\theta_s\|^2 - \langle \theta_s, C\theta_s \rangle^2\Big]d[x]_{0,s}\notag\\
	&\leq& \log \|y_a\| -\lambda (t-a) + \int_a^t h(\|y_s\|)ds + \Big\|\int_{a}^{t} \langle \theta_s, C\theta_s \rangle dx_s\Big\| \notag\\
	&& + \Big\|\int_{a}^{t} \Big[\frac{1}{2}\|C\theta_s\|^2 - \langle \theta_s, C\theta_s \rangle^2\Big]d[x]_{a,s}\Big\|. 
	\end{eqnarray}
	The last term in the last line of \eqref{logx2} can be estimated as
	\allowdisplaybreaks
	\begin{eqnarray}\label{logxyoung}
	&&\Big\|\int_a^b \Big[\frac{1}{2}\|C\theta_s\|^2 - \langle \theta_s, C\theta_s \rangle^2\Big]d[x]_{a,s} \Big\| \notag \\
	&& \leq \frac{3}{2} \|C\|^2 \Big|[x]_{a,b}\Big| + K_\alpha|b-a|^{3 \alpha} \ltn [x] \rtn_{2\alpha,\Delta^2([a,b])} \ltn\Big[\frac{1}{2}\|C\theta\|^2 - \langle \theta, C\theta\rangle^2\Big] \rtn_{\alpha,[a,b]}\notag\\
	&& \leq \frac{3}{2} \|C\| ^2 |b-a|^{2\alpha} \ltn [x] \rtn_{2\alpha,\Delta^2([a,b])}  + K_\alpha|b-a|^{3 \alpha} \ltn [x] \rtn_{2\alpha,\Delta^2([a,b])} \Big[\|C\|^2  + 4 \|C\|^2 \Big] \ltn \theta \rtn_{\alpha,[a,b]}\notag\\
	&& \leq \|C\|^2 |b-a|^{2\alpha}\ltn [x] \rtn_{2\alpha,\Delta^2([a,b])} \Big[ \frac{3}{2} + 5K_\alpha |b-a|^{\alpha} \Big(C_g \ltn x \rtn_\alpha + |b-a|^{2\nu-2\alpha}(\ltn x \rtn_\alpha+1) \ltn \theta,\theta^\prime \rtn_{x,2\alpha}\Big)\Big]\notag\\ 
	&& \leq \|C\|^2 |b-a|^{2\alpha}\ltn [x] \rtn_{2\alpha,\Delta^2([a,b])} \Big( \frac{3}{2} + 5K_\alpha C_G  |b-a|^{\alpha} \ltn x \rtn_\alpha \Big) \notag \\
	&& \qquad  + 5K_\alpha \|C\|^2 |b-a|\Big[ \frac{1}{2}\ltn [x] \rtn_{2\alpha,\Delta^2([a,b])}^2(\ltn x \rtn_\alpha+1)^2 + \frac{1}{2} \ltn \theta,\theta^\prime \rtn_{x,2\alpha}^2 \Big].
	\end{eqnarray}
Meanwhile the rough integral can be estimated as 
\allowdisplaybreaks
	\begin{eqnarray}\label{logxrough}
\Big\|\int_{a}^{b} \langle \theta_s, C\theta_s \rangle dx_s\Big\| 
	&\leq& \Big|\langle \theta_a, C\theta_a \rangle\Big| |x_b-x_a| + \Big|\langle \theta,C\theta \rangle^\prime_a \Big| |\X_{a,b}|\notag\\
	&& + C_\alpha |b-a|^{3\alpha} \Big(\ltn x \rtn_{\alpha,[a,b]} \ltn R^{\langle \theta,C\theta\rangle} \rtn_{2\alpha,[a,b]} + \ltn \langle  \theta,C\theta \rangle^\prime \rtn_{\alpha,[a,b]} \ltn \X \rtn_{2\alpha,\Delta^2([a,b])} \Big)\notag\\
	&\leq& \|C\| |b-a|^\alpha \ltn x \rtn_{\alpha,[a,b]} + 4 \|C\|^2 |b-a|^{2\alpha} \ltn \X \rtn_{2\alpha,\Delta^2([a,b])} \notag\\
	&& + C_\alpha |b-a|^{3\alpha} \Big(\ltn x \rtn_{\alpha,[a,b]} \ltn R^{\langle \theta,C\theta \rangle} \rtn_{2\alpha,[a,b]} + \ltn \langle  \theta,C\theta \rangle^\prime \rtn_{\alpha,[a,b]} \ltn \X \rtn_{2\alpha,\Delta^2([a,b])} \Big).\notag\\
	\end{eqnarray}
	To estimate the brackets of the last line of \eqref{logxrough}, we apply \eqref{yalpha} to get
	\begin{eqnarray*}
	\ltn \langle  \theta,C\theta \rangle^\prime \rtn_{\alpha,[a,b]} &\leq & \ltn \|C\theta\|^2 \rtn_{\alpha,[a,b]} +\ltn \langle \theta, C^2\theta\rangle \rtn_{\alpha,[a,b]} + \ltn \langle \theta, C\theta \rangle^2 \rtn_{\alpha,[a,b]}  + \ltn\langle \theta,C\theta \rangle \langle \theta, C\theta  \rangle \rtn_{\alpha,[a,b]} \\
	&\leq& 14 \|C\|^2 \ltn \theta \rtn_{\alpha,[a,b]}\notag \\
	&\leq&  14\|C\|^2 \Big(C_g \ltn x \rtn_\alpha + |b-a|^{2\nu-2\alpha}(\ltn x \rtn_\alpha+1) \ltn \theta,\theta^\prime \rtn_{x,2\alpha}\Big).
	\end{eqnarray*}
	Meanwhile
	\begin{eqnarray*}
		\|R^{\langle \theta,C\theta \rangle}_{s,t}\| 
		&\leq& \Big|\langle \theta_t,C\theta_t \rangle - \langle \theta_s,C\theta_s \rangle - \langle \theta_\cdot,C\theta\rangle^\prime_s x_{s,t}\Big| \\
		&\leq& 2 \|C\|\|R^\theta_{s,t}\| + 2 \|C\| \|\theta^\prime_s\|\|R^\theta_{s,t}\|\|x_{s,t}\| + \|C\| \|R^\theta_{s,t}\|^2 + \|C\| \|\theta^\prime_s\|^2\|x_{s,t}\|^2\\
		&\leq& 2 \|C\| \|R^\theta_{s,t}\| + 4 \|C\|^2\|R^\theta_{s,t}\|\|x_{s,t}\| + \|C\| \|R^\theta_{s,t}\|^2 + 4 \|C\|^3 \|x_{s,t}\|^2;
	\end{eqnarray*}
	thus it follows that
	\begin{eqnarray*}
	&& \ltn R^{\langle \theta,C\theta\rangle} \rtn_{2\alpha,[a,b]} \\
	& \leq& 2 \|C\| \ltn R^\theta \rtn_{2\alpha,[a,b]} + 4 \|C\|^2 |b-a|^{\alpha} \ltn x \rtn_{\alpha,[a,b]} \ltn R^\theta\rtn_{2\alpha,[a,b]} + \|C\| |b-a|^{2\alpha} \ltn R^\theta \rtn_{2\alpha,[a,b]}^2 \\
	&&+ 4 \|C\|^3 \ltn x \rtn_{\alpha,[a,b]}^2 \notag\\
	&\leq& 4 \|C\|^3 \ltn x \rtn_{\alpha,[a,b]}^2 + \Big( 2 \|C\| + 4 \|C\|^2 |b-a|^{\alpha} \ltn x \rtn_{\alpha,[a,b]} \Big) \ltn \theta,\theta^\prime\rtn_{x,2\alpha} +  \|C\| |b-a|^{2\alpha} \ltn \theta,\theta^\prime\rtn_{x,2\alpha}^2.
	\end{eqnarray*}
	Combining all the above estimates into \eqref{logxrough} and applying Cauchy inequality we get
	\begin{eqnarray}\label{logxrough1}
	\Big\|\int_{a}^{b} \langle \theta_s, C\theta_s\rangle dx_s\Big\|
	&\leq& \|C\| |b-a|^\alpha \ltn x \rtn_{\alpha,[a,b]} + 4 \|C\|^2 |b-a|^{2\alpha} \ltn \X \rtn_{2\alpha,\Delta^2([a,b])} \notag \\
	&& +  C_\alpha \|C\|^2|b-a|^{3\alpha} \Big( 4\|C\| \ltn x \rtn_{\alpha,[a,b]} \ltn x \rtn_{\alpha,[a,b]}^2  +  14 C_G \ltn \X \rtn_{2\alpha,\Delta^2([a,b])} \ltn x \rtn_{\alpha,[a,b]}  \Big)\notag\\
	&& + C_\alpha \|C\| |b-a|^{3\alpha} \Big\{\ltn x \rtn_{\alpha,[a,b]}^2 \Big(1+ 2 \|C\||b-a|^{\alpha} \ltn x \rtn_{\alpha,[a,b]} \Big)^2 \notag \\
	&&+ \ltn \theta,\theta^\prime\rtn_{x,2\alpha}^2  +  |b-a|^{2\alpha} \Big( \frac{1}{2}\ltn x \rtn_{\alpha,[a,b]}^2 + \frac{1}{2}\ltn \theta,\theta^\prime\rtn_{x,2\alpha}^4\Big) \notag \\
	&& + 7 \|C\||b-a|^{2\nu-2\alpha}\Big[(\ltn x \rtn_\alpha+1)^2 \ltn \X \rtn_{2\alpha,[a,b]}^2 +  \ltn \theta,\theta^\prime \rtn_{x,2\alpha}^2 \Big]\Big\}.
	\end{eqnarray}
	Replacing \eqref{logxyoung} and \eqref{logxrough1} into \eqref{logx2} using \eqref{alphanorm1} in Lemma \ref{roughGronwall}, we conclude that there exists an increasing polynomial with all positive coefficients  
	\[
	\kappa(t,a,x,\X,[x])=\kappa\Big(t-a,\ltn x \rtn_{\alpha,[a,t]}, \ltn \X \rtn_{2\alpha,\Delta^2([a,t])},\ltn [x] \rtn_{2\alpha,\Delta^2([a,t])}\Big), \quad \kappa(a,a,x,\X,[x]) =0, 
	\]
	and an increasing function $K: R^+ \to \R^+$ such that for all $0 \leq a \leq t \leq 1$
	\begin{equation}\label{logx3}
		\log \|y_t\| \leq \log \|y_a\| + \int_a^t \Big[h(\|y_s\|) + \|C\| K(\|y_s\|)-\lambda_A\Big] ds + \|C\| \kappa(t,a,x,\X,[x]),
	\end{equation}	
	which is similar to \eqref{logx2}. Because of \eqref{expect}, \eqref{gamma} holds for the realization $x$ and $\X$. Since $\lambda > k(0)$, we can choose $\|C\|<\epsilon$ small enough such that function $H(u) := h(u) + \|C\| K(u)$ is increasing function and $H(0) < \lambda_A$. Using \eqref{ysupest}, Lemma \ref{localstabPro} and Lemma \ref{globalstabPro}, we can then prove that system \eqref{RDE1} is locally/globally exponentially stable at zero for almost sure all the realization.
		
\end{proof}	

\begin{corollary}
	Let $\Phi(t,x,\X,[x])$ be the solution matrix of $dz_t = A z_t dt + C z_t dx_t$. Then there exists a function $\kappa(t,a,x,\X,[x])$ such that for any $\delta >0$
	\begin{equation}\label{phiest}
	\|\Phi(t,x,\X,[x])\| \leq \exp \Big\{-\lambda_A t + \|C\| \kappa(t,0,x,\X,[x]) \Big\}.
	\end{equation} 	
	As a result
	\begin{equation}\label{LEphi}
	\limsup \limits_{t \to \infty} \frac{1}{t}\log \|z_t\| \leq -\lambda_A + \|C\|\ \E\  \kappa(\delta,0,x,\X,[x]).
	\end{equation}
\end{corollary}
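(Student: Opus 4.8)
The plan is to read off \eqref{phiest} as the linear, source-term-free specialisation of the computation already carried out in the proof of Theorem~\ref{stabYDE1}, and then to convert it into the Lyapunov-exponent bound \eqref{LEphi} by an ergodic argument. First I would recall that the solution matrix $\Phi$ exists by the existence-and-uniqueness theorem for the case $G(y)=Cy$ proved above, that $z_t=\Phi(t,x,\X,[x])z_0$, and that $\|\Phi(t,x,\X,[x])\|=\sup_{\|z_0\|=1}\|z_t\|$; hence it suffices to bound $\log\|z_t\|-\log\|z_0\|$ by a quantity that does not depend on $z_0$, after which \eqref{phiest} follows for the operator norm and \eqref{LEphi} for every (hence any) initial condition.

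Next I would specialise \textbf{Step~1}--\textbf{Step~2} of the proof of Theorem~\ref{stabYDE1} to $f\equiv 0$ (equivalently $h\equiv 0$). With $f\equiv 0$ the angular equation \eqref{RDEangle} becomes a closed rough differential equation for $\theta$ on the unit sphere, carrying no dependence on $\|y\|$, so Proposition~\ref{roughGronwall} bounds $\ltn\theta,\theta^\prime\rtn_{x,2\alpha,[a,b]}$ purely in terms of $\ltn x\rtn_\nu$, $\ltn\X\rtn_{2\nu}$ and $\ltn[x]\rtn_{2\nu}$. Inserting \eqref{logxyoung} and \eqref{logxrough1} into \eqref{logx2}, using $\langle\theta_s,A\theta_s\rangle\le-\lambda_A$ from \eqref{lambda} together with $f\equiv 0$, and then applying \eqref{alphanorm1}, one obtains for all $0\le a\le t\le a+1$
\[
\log\|z_t\|\le\log\|z_a\|-\lambda_A(t-a)+\|C\|\,\kappa\big(t-a,\,\ltn x\rtn_{\alpha,[a,t]},\,\ltn\X\rtn_{2\alpha,\Delta^2([a,t])},\,\ltn[x]\rtn_{2\alpha,\Delta^2([a,t])}\big),
\]
with $\kappa$ an increasing polynomial vanishing at $t=a$; this is exactly \eqref{logx3} with $h\equiv 0$, the $\|y_s\|$-dependent term $K(\|y_s\|)$ being absent because $g$ is linear. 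The only point requiring attention here is that every noise contribution collected from \eqref{logxyoung}--\eqref{logxrough1} carries at least one factor $\|C\|$, so pulling $\|C\|$ out front and absorbing the remaining powers of $\|C\|$ into the coefficients of $\kappa$ is legitimate.

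Then I would iterate this one-step estimate over consecutive intervals of a fixed length $\delta\in(0,1]$: summing over them, and using that the rough-path seminorms are controls so that monotonicity of $\kappa$ lets the contributions be gathered into a single function, yields \eqref{phiest} with the resulting $\kappa(t,0,x,\X,[x])$. Taking logarithms, dividing by $t$ and letting $t\to\infty$, the term $\frac1t\log\|z_0\|$ vanishes and what remains is $\frac1t$ times a Birkhoff average of the stationary sequence $\big(\kappa(\delta,k\delta,x,\X,[x])\big)_{k}$; stationarity comes from the stationarity hypothesis on $X$, and integrability of $\kappa(\delta,0,x,\X,[x])$ from the finiteness of all (indeed exponential) moments of $\ltn x\rtn_\nu$, $\ltn\X\rtn_{2\nu}$ and $\ltn[x]\rtn_{2\nu}$ for Gaussian lifts (Kolmogorov's criterion for rough paths and the tail estimate \cite[Theorem 6.3]{cassetal} quoted above). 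The ergodic theorem then gives $\limsup_{t\to\infty}\frac1t\log\|z_t\|\le-\lambda_A+\|C\|\,\E\kappa(\delta,0,x,\X,[x])$, which is \eqref{LEphi} (and by stationarity the right-hand side does not actually depend on the choice of $\delta$).

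The main obstacle is not a single sharp step but the bookkeeping: one must carefully verify that each stochastic term produced by \eqref{logxyoung}, \eqref{logxrough1} and Proposition~\ref{roughGronwall} does come with a power of $\|C\|$, so that the clean form $\|C\|\,\kappa(\cdots)$ with $\kappa$ free of $C$ (or at worst polynomial in $\|C\|$) can be asserted, and that the iterated $\kappa$ over $[0,t]$ retains finite expectation per unit time. Both rest on the integrability of the Gaussian rough-path seminorms under the H\"older norm, which is the genuinely nontrivial input here, and is precisely the point at which the H\"older-norm approach runs into difficulty in the harder case $g\in C^3_b$ deferred to Part~II.
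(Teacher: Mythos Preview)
Your proposal is correct and follows precisely the route the paper intends. The Corollary is stated without proof in the paper; it is meant as an immediate consequence of the computation in the proof of Theorem~\ref{stabYDE1} specialised to $f\equiv 0$ (so $h\equiv 0$ and, as you correctly note, the $K(\|y_s\|)$ term drops out because in the linear case the estimates \eqref{logxyoung}, \eqref{logxrough1} and Proposition~\ref{roughGronwall} involve only $\theta$ and the rough-path seminorms), giving the one-step bound, which is then iterated and fed into Birkhoff exactly as in Lemma~\ref{globalstabPro}. One small aside: your parenthetical remark that the right-hand side of \eqref{LEphi} ``does not actually depend on the choice of $\delta$'' is not quite right---$\E\,\kappa(\delta,0,x,\X,[x])$ does depend on $\delta$ (and a careful accounting of the iteration produces an extra factor $1/\delta$ in front of it)---but this does not affect the argument.
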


\begin{corollary}
	Consider the following system
	\begin{equation}\label{fSDE}
	dy_t = [Ay_t + f(y_t)]dt + Cy_tdB^H_t,\quad y_\cdot \in \R^d, 
	\end{equation}
	where $B^H$ is a fractional Brownian motion with Hurst index $\frac{1}{3}<H<1$; $A$ is negative definite and $f: \R^d \to \R^d$ is globally Lipschitz continuous, i.e. there exist contants $h_0, c_f >0$ such that 
	\begin{equation}\label{conditions}
	\langle y, Ay \rangle \leq - h_0 \|y\|^2,\quad \|f(y_1) - f(y_2)\| \leq c_f \|y_1-y_2\|, \quad \forall y_1,y_2\in \R^d. 
	\end{equation}
	Assume that $h_0 > c_f$. There exists an $\epsilon >0$ such that under condition $\|C\| < \epsilon$, $\varphi$ possesses a random pullback attractor consisting only one point $a(x)$, to which other random points converge to with exponential rate.
\end{corollary}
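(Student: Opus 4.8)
The plan is to deduce the corollary from the exponential stability already established in this section — applied to the \emph{difference} of two solutions rather than to a fixed one — and then to run the standard random dynamical systems argument for a one-point pullback attractor.

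First I would fix a path $x$ (the rough-path lift of a sample of $B^H$) and two solutions $y,\tilde y$ of \eqref{fSDE} driven by it, and set $z:=y-\tilde y$. Then $z$ solves an equation of the form \eqref{RDE1} with $g(z)=Cz$, whose drift satisfies, by \eqref{conditions}, $\langle z,Az+(f(y)-f(\tilde y))\rangle\le-(h_0-c_f)\|z\|^2$ and $\|f(y_t)-f(\tilde y_t)\|\le c_f\|z_t\|$; since $h_0>c_f$, this falls under the hypotheses of Theorem \ref{globalYDE} (if $H>\tfrac12$) resp.\ Theorem \ref{stabYDE1} (if $H\le\tfrac12$) with $\lambda_A=h_0$, $C_f=c_f$ and $h\equiv c_f$, so that $\lambda_A>h(0)$ and $\lambda_A>C_f$ both hold. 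Running the computations of those proofs — and using that for $g(z)=Cz$ the coupling term $\ltn z\rtn_{q\text{-var}}$, resp.\ $\ltn z,z'\rtn_{x,2\alpha}$, drops out of the right-hand side, exactly as in the Corollary following Theorem \ref{globalYDE} — shows that $\log\|z_t\|$ satisfies the hypothesis of Lemma \ref{globalstabPro} with a rate that is strictly positive once $\|C\|$ is small; inspecting the proof of that lemma (cf.\ \eqref{xn}) then yields, for a suitable $\epsilon>0$, a deterministic $\bar\lambda>0$ and a random variable $R(t,\omega)>0$ such that, for all $\|C\|<\epsilon$ and a.s.\ $\omega$,
\[
\|\varphi(t,\omega,x_1)-\varphi(t,\omega,x_2)\|\le R(t,\omega)\,\|x_1-x_2\|,\qquad R(t,\omega)=\exp\{-\bar\lambda t+\Gamma(t,\omega)\},
\]
where $\varphi$ is the solution map and $\Gamma(t,\cdot)$ is a Birkhoff sum over unit intervals of an integrable function — integrability coming from the finiteness of all exponential moments of $\ltn x\rtn_{p\text{-var},[0,1]}$ (and, for $H\le\tfrac12$, of $\ltn\X\rtn_{2\alpha}$ and $\ltn[x]\rtn_{2\alpha}$ on $[0,1]$) for Gaussian $x$. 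In particular $\tfrac1t\log R(t,\omega)\to-\bar\lambda+\E[\cdot]<0$ a.s.

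Next I would invoke that \eqref{fSDE} generates a continuous random dynamical system $\varphi$ over the ergodic metric dynamical system $(\Omega,\cF,\bP,(\theta_t))$ given by the shift on the (rough-path lift of the) samples of $B^H$, which satisfies \eqref{Gaussianexpect}, hence \eqref{expect}, with $\bar\nu=H$; this is standard, see e.g.\ \cite{duchongcong18,ducGANSch18,garrido-atienzaetal,riedelScheutzow}. By stationarity of $\theta$, the quantity $\Gamma(t,\theta_{-t}\omega)$ is a backward Birkhoff sum, so $R(t,\theta_{-t}\omega)\to0$ exponentially fast a.s. A one-step energy estimate for $\|y_t\|^2$ — splitting $f(y)=f(0)+[f(y)-f(0)]$, using $h_0>c_f$ and the Gaussian $p$-variation tails above — provides a tempered random radius $\rho$ with $\|\varphi(1,\omega,0)\|\le\rho(\omega)$; hence, by the cocycle property, the $n$-th increment of the sequence $\big(\varphi(n,\theta_{-n}\omega,0)\big)_n$ is at most $R(n,\theta_{-n}\omega)\,\rho(\theta_{-(n+1)}\omega)$, which is a.s.\ summable. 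Thus $a(\omega):=\lim_{t\to\infty}\varphi(t,\theta_{-t}\omega,0)$ exists and, by the cocycle property and continuity of $\varphi$ in the initial datum, is invariant: $\varphi(t,\omega,a(\omega))=a(\theta_t\omega)$. Finally, for any bounded $K\subset\R^d$,
\[
\sup_{x\in K}\big\|\varphi(t,\theta_{-t}\omega,x)-a(\omega)\big\|\le R(t,\theta_{-t}\omega)\sup_{x\in K}\|x\|+\big\|\varphi(t,\theta_{-t}\omega,0)-a(\omega)\big\|,
\]
and the last term, being the tail of the same a.s.\ summable series, decays exponentially a.s.; so does the whole right-hand side. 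Therefore $\mathcal A(\omega)=\{a(\omega)\}$ is a one-point random pullback attractor attracting every bounded set, and forward convergence of a (tempered) random point $x_0$ is immediate from $\|\varphi(t,\omega,x_0(\omega))-a(\theta_t\omega)\|=\|\varphi(t,\omega,x_0(\omega))-\varphi(t,\omega,a(\omega))\|\le R(t,\omega)\|x_0(\omega)-a(\omega)\|$.

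The crux is the first step: carrying the zero-solution stability proofs of Theorems \ref{globalYDE} and \ref{stabYDE1} over to the difference equation while keeping the estimate genuinely \emph{multiplicative} in the initial datum and controlling the integrability, hence temperedness, of the random prefactor $R(t,\omega)$ — both of which ultimately reduce to the exponential integrability of the Gaussian $p$-variation of $x$ (and, for $H\le\tfrac12$, of its iterated integral and bracket) over unit intervals, already exploited throughout this section. The remaining random dynamical systems bookkeeping — the cocycle property over the ergodic shift on fBm path space, and the temperedness of $\rho$ — is routine but must be verified in the rough-path category.
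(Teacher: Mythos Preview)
Your approach is correct and genuinely different from the paper's. The paper's proof is essentially a pointer to external work: for $H>\tfrac12$ it cites \cite[Theorem~3.3]{duchongcong18} outright; for $\tfrac13<H<\tfrac12$ it starts from the linear solution-matrix bound \eqref{phiest} (for $dz=Az\,dt+Cz\,dx$, with no $f$), massages $\kappa$ into the form $H_0+(t-a)\tilde\kappa$ so that $\Gamma(t,s)=(t-s)\tilde\kappa$ is a control, and then declares the rest ``similar to the proof of \cite[Theorem~4.4]{duchongcong18}''. That external argument is a variation-of-constants step: exponential decay of $\Phi$ plus the Lipschitz bound on $f$ yields contraction of the nonlinear flow, after which the RDS bookkeeping (existence of the cocycle is quoted from \cite{BRSch17}) gives the one-point attractor.

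You instead absorb $f$ into the Lyapunov estimate from the outset by working with the difference $z=y-\tilde y$ and re-running the proofs of Theorems~\ref{globalYDE}/\ref{stabYDE1} on $z$. This is more self-contained within the present paper and avoids the variation-of-constants detour; the price is that the difference equation has a \emph{non-autonomous} drift $Az+f(y_t)-f(\tilde y_t)$, so it does not literally satisfy the hypotheses of those theorems---you are (correctly) claiming that their \emph{proofs} go through because only the pointwise bounds $\langle z,\cdot\rangle\le -h_0\|z\|^2+c_f\|z\|^2$ and $\|\cdot\|\le c_f\|z\|$ are used, together with $G(z)=Cz$ so that the angle equation decouples from $\|z\|$ and no $\ltn z\rtn$-term survives. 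That observation is exactly right and worth stating explicitly. Your RDS part (Cauchy sequence for $\varphi(n,\theta_{-n}\omega,0)$ via the cocycle identity, temperedness of $\rho$, one-point attractor) is the standard argument and matches what \cite{duchongcong18} does; the paper simply outsources it.
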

\begin{proof}
	The case $H>\frac{1}{2}$ is proved in \cite[Theorem 3.3]{duchongcong18}. For $\frac{1}{3}<H<\frac{1}{2}$, starting with the estimate \eqref{phiest}, we apply the H\"older inequality such that 
	\[
	\kappa(t,a,x,\X,[x]) \leq H_0 + (t-a) \tilde{\kappa} (t,a,x,\X,[x]), \quad \forall 0 \leq a \leq t \leq 1, 
	\]
	where $H_0>0$ is a constant and
	\[
	\tilde{\kappa}(t,a,x,\X,[x])=\tilde{\kappa}\Big(t-a,\ltn x \rtn_{\alpha,[a,t]}, \ltn [x] \rtn_{2\alpha,\Delta^2([a,t])}, \ltn \X \rtn_{2\alpha,\Delta^2([a,t])}\Big), \quad \tilde{\kappa}(a,a,x,\X) =0, 
	\]
	and $\tilde{\kappa}$ is an increasing function. It follows that $\Gamma(t,s,x,\X,[x]) = (t-s) \tilde{\kappa} (t,s,x,\X,[x])$ is a control function, and 
	\[
	\|\Phi(t,x,\X,[x])\| \leq \exp \Big\{\|C\| H_0 -\lambda_A t + \|C\| \Gamma(t,0,x,\X,[x]) \Big\}.
	\]
	The arguments are then similar to the proof of \cite[Theorem 4.4]{duchongcong18}. We stress here that for the rough case, it is proved in \cite{BRSch17} that the system \eqref{fSDE} generates a random dynamical system \cite{arnold}. 
\end{proof}	

\section{Appendix}

\begin{proof}[{\bf Proposition \ref{roughGronwall}}]
	Consider the solution mapping $\cM: \cD^{2\alpha}_x(\theta_a,g(\theta_a)) \to \cD^{2\alpha}_x(\theta_a,g(\theta_a)) $ defined by
	\[
	\cM (\theta,\theta^\prime)_t = (F(\theta,\theta^\prime)_t, g(\theta_t)),
	\]
	together with the seminorm
	\begin{eqnarray*}
		\ltn (\theta,\theta^\prime)\rtn_{x,2\alpha} = \ltn \theta^\prime \rtn_\alpha + \ltn R^\theta \rtn_{2\alpha},\quad
		\ltn \cM (\theta,\theta^\prime)\rtn_{x,2\alpha} = \ltn g_1(\theta) \rtn_\alpha + \ltn R^{F(\theta,\theta^\prime)} \rtn_{2\alpha}.
	\end{eqnarray*}
	We are going to estimate these seminorms. Observe from \eqref{nonlinRDE1} that $\theta^\prime = g_1(\theta_t)$, thus
	\allowdisplaybreaks
	\begin{eqnarray}
	\ltn \theta\rtn_\alpha &\leq&\ltn \theta^\prime \rtn_\infty \ltn x \rtn_\alpha + |T-a|^\alpha \ltn R^\theta\rtn_{2\alpha} \leq C_g \ltn x \rtn_\alpha + |b-a|^{\alpha} \ltn \theta,\theta^\prime \rtn_{x,2\alpha};\label{yalpha}\\	
	\ltn g_1(\theta) \rtn_\alpha &\leq& \ltn D_\theta g_1(\theta_\cdot) \rtn_\infty \ltn \theta\rtn_\alpha\leq C_{g_1} \ltn \theta \rtn_\alpha \label{gyalpha}.
	\end{eqnarray}
	Meanwhile using H\"older inequality
	\begin{eqnarray}\label{R}
	\|R^{F(\theta,\theta^\prime)}_{s,t}\|& \leq& \int_s^t \|f_1(u,\theta_u)\|du  + \|D_\theta g_1(\theta_s)g(\theta_s)\| |\X_{s,t}| +\|k_1(\theta_\cdot)\|_\infty |[x]_{s,t}| \notag\\
	&&+ K_\alpha |t-s|^{3\alpha} \ltn k_1(\theta) \rtn_\alpha \ltn [x] \rtn_{2\alpha}+ C_\alpha |t-s|^{3\alpha} \Big(\ltn x\rtn_\alpha \ltn R^{g_1(\theta)}\rtn_{2\alpha} + \ltn g_1(\theta)^\prime \rtn_\alpha \ltn \X \rtn_{2\alpha}\Big) \notag\\
	&\leq& |t-s|^{2\nu} \Big(\int_a^b \|f_1(u,\theta_u)\|^{\frac{1}{1-2\nu}}du\Big)^{1-2\nu} + C_{g_1}^2 |\X_{s,t}|+ C_k  |[x]_{s,t}| \notag\\
	&&+ K_\alpha |t-s|^{3\alpha} C_k\ltn \theta \rtn_\alpha \ltn [x] \rtn_{2\alpha} + C_\alpha |t-s|^{3\alpha} \Big(\ltn x\rtn_\alpha \ltn R^{g_1(\theta)}\rtn_{2\alpha} + \ltn g_1(\theta)^\prime \rtn_\alpha \ltn \X \rtn_{2\alpha}\Big),\notag\\
	\end{eqnarray}
	where we use the fact that $\theta^\prime = g_1(\theta)$ to get
	\begin{eqnarray*}
		\ltn g_1(\theta)^\prime \rtn_\alpha &=& \ltn D_\theta g_1(\theta)\theta^\prime \rtn_\alpha	\leq \ltn D_\theta g_1(\theta) \rtn_\infty \ltn \theta^\prime \rtn_\alpha + \ltn D_\theta g_1(\theta) \rtn_\alpha  \ltn \theta^\prime \rtn_\infty \\
		&\leq& C_{g_1} \ltn g(\theta)\rtn_\alpha + C_{g_1} \ltn \theta \rtn_\alpha \| g_1(\theta) \|_\infty \leq 2C_{g_1}^2 \ltn \theta \rtn_\alpha.
	\end{eqnarray*}
	On the other hand
	\begin{eqnarray*}
		\|R^{g_1(\theta)}_{s,t}\| &\leq& \int_0^1 \Big\|D_\theta g_1\Big(\theta_s + \eta (\theta_t-\theta_s)\Big) - D_\theta g_1(\theta_s)\Big\| \|\theta^\prime_s\| |x(t)-x(s)|d\eta  \\
		&& + \int_0^1 \Big\|D_\theta g_1\Big(\theta_s + \eta (\theta_t-\theta_s)\Big) \Big\| d \eta\ \|R^\theta_{s,t}\|, 
	\end{eqnarray*}
	thus
	\begin{eqnarray*}
		\ltn R^{g_1(\theta)}\rtn_{2\alpha} &\leq&  \|D_\theta g_1(\theta)\|_{\infty} \ltn R^\theta \rtn_{2\alpha} + \frac{1}{2} C_{g_1} \ltn g_1(\theta_s)\rtn_\infty \ltn x \rtn_\alpha \ltn \theta\rtn_\alpha \leq C_{g_1} \ltn R^\theta \rtn_{2\alpha} + \frac{1}{2}C_{g_1}^2 \ltn x \rtn_\alpha \ltn \theta \rtn_\alpha.	
	\end{eqnarray*}
	Combining these above estimates into \eqref{R}, we get for any $a<b\leq a+1$
	\begin{eqnarray*}
		\ltn R^{F(\theta,\theta^\prime)}\rtn_{2\alpha} &\leq& (b-a)^{2 \nu - 2\alpha}\Big(\int_a^b \|f_1(u,\theta_u)\|^{\frac{1}{1-2\nu}}du\Big)^{1-2\nu}+ C_{g_1}^2 \ltn\X\rtn_{2\alpha} + C_{k_1}\ltn [x]\rtn_{2\alpha} \\
		&& 
		+ K_\alpha C_{k_1} |b-a|^{\alpha} \ltn \theta \rtn_\alpha \ltn [x] \rtn_{2\alpha} \\
		&&+ C_\alpha |b-a|^{\alpha} \Big\{\ltn x\rtn_\alpha \Big[C_{g_1} \ltn R^\theta \rtn_{2\alpha} +\frac{1}{2}C_{g_1}^2 \ltn x \rtn_\alpha \ltn \theta \rtn_\alpha\Big]+  2 C_{g_1}^2 \ltn \theta \rtn_\alpha \ltn \X \rtn_{2\alpha}\Big\}\\
		&\leq& C_{f_1} (b-a)^{2\nu-2\alpha} + C_{g_1}^2\ltn\X\rtn_{2\alpha} + C_{k_1} \ltn [x]\rtn_{2\alpha}+ C_\alpha C_{g_1} (b-a)^\alpha\ltn x \rtn_\alpha  \ltn R^\theta \rtn_{2\alpha}\\
		&& + \Big\{ C_g^2C_\alpha \ltn \X \rtn_{2\alpha} + C_{k_1} K_\alpha \ltn [x] \rtn_{2\alpha} +\frac{1}{2} C_\alpha C_g^2 \ltn x \rtn_\alpha^2\Big\} |b-a|^\alpha\ltn \theta \rtn_\alpha.
	\end{eqnarray*}
 Together with \eqref{yalpha} and \eqref{gyalpha} we conclude that for any $a < b$ such that $b-a \leq1$ then 	
	\begin{eqnarray*}
		&&\ltn R^{F(\theta,\theta^\prime)}\rtn_{2\alpha} + \ltn g_1(\theta) \rtn_\alpha \\
		&\leq& C_{f_1} (b-a)^{2\nu-2\alpha} + C_{g_1}^2\ltn\X\rtn_{2\alpha} + C_{k_1} \ltn [x]\rtn_{2\alpha}+ C_\alpha C_{g_1} (b-a)^\alpha\ltn x \rtn_\alpha  \ltn \theta,\theta^\prime \rtn_{x,2\alpha}\\
		&& + \Big\{ \Big[C_{g_1}^2C_\alpha\ltn \X \rtn_{2\alpha} + C_{k_1} K_\alpha \ltn [x] \rtn_{2\alpha} + \frac{1}{2}C_\alpha C_{g_1}^2 \ltn x \rtn_\alpha^2\Big] |b-a|^\alpha + C_{g_1}\Big\}\times\\
		&&\qquad \qquad \qquad \times \Big[ C_{g_1} \ltn x \rtn_\alpha + |b-a|^{\alpha} \ltn \theta,\theta^\prime \rtn_{x,2\alpha} \Big] \\
		&\leq& C_{f_1} (b-a)^{2\nu-2\alpha} + C_{g_1}^2\ltn\X\rtn_{2\alpha} + C_{k_1} \ltn [x]\rtn_{2\alpha}+ C_{g_1}^2 \ltn x \rtn_\alpha\\
		&&+  \Big[C_{g_1}^2C_\alpha \ltn \X \rtn_{2\alpha} + C_{k_1} K_\alpha \ltn [x] \rtn_{2\alpha} + \frac{1}{2}C_\alpha C_{g_1}^2 \ltn x \rtn_\alpha^2\Big]C_{g_1} |b-a|^\alpha \ltn x \rtn_\alpha \\	
		&&+ \Big\{ \Big[C_{g_1}^2C_\alpha \ltn \X \rtn_{2\alpha} + C_{k_1} K_\alpha \ltn [x] \rtn_{2\alpha} + \frac{1}{2}C_\alpha C_{g_1}^2 \ltn x \rtn_\alpha^2\Big](b-a)^\alpha+ C_{g_1} + C_\alpha C_{g_1} \ltn x \rtn_\alpha\Big\}\times\\
		&& \qquad \qquad \qquad \times (b-a)^\alpha\ltn \theta,\theta^\prime \rtn_{x,2\alpha}\\
		&\leq& 	M \Big[|b-a|^{2\nu-2\alpha} + \ltn \X \rtn_{2\alpha} + \ltn [x] \rtn_{2\alpha} +\ltn x \rtn_\alpha +  \Big(\ltn \X \rtn_{2\alpha} + \ltn x \rtn_\alpha^2 +  \ltn [x] \rtn_{2\alpha}\Big)(b-a)^\alpha \ltn x \rtn_\alpha \Big] \\
		&& + M \Big\{ \Big(\ltn \X \rtn_{2\alpha} +  \ltn x \rtn_\alpha^2 +\ltn [x] \rtn_{2\alpha}\Big)\ltn x \rtn_\alpha +|b-a|^{2\nu -2\alpha}  +\ltn \X \rtn_{2\alpha} + \ltn [x] \rtn_{2\alpha}+ \ltn x \rtn_\alpha \Big\}	 \ltn \theta,\theta^\prime \rtn_{x,2\alpha}.
	\end{eqnarray*}
	Now construct for any fixed $\mu \in (0,1)$ a sequence of stopping times $\{\tau_k\}_{k \in \N}$ such that $\tau_0 = 0$ and
	\begin{equation}\label{stoppingtime}
	|\tau_{k+1} - \tau_k|^{2\nu-2\alpha} + |\tau_{k+1} - \tau_k|^{\nu-\alpha}\Big(\ltn x \rtn_{\nu,[\tau_k,\tau_{k+1}]} +  \ltn \X \rtn_{2\nu,\Delta^2([\tau_k,\tau_{k+1}])} +  \ltn [x] \rtn_{2\nu,\Delta^2([\tau_k,\tau_{k+1}])}\Big) = \frac{\mu}{2M}, 
	\end{equation}
	for all $k \in \N$, then it follows that 
	\begin{eqnarray*}
		\ltn x \rtn_\alpha &\leq& |\tau_{k+1} - \tau_k|^{\nu-\alpha} \ltn x \rtn_{\nu,[\tau_k,\tau_{k+1}]} < 1, \\
		\ltn \X \rtn_{2\alpha} &\leq& |\tau_{k+1} - \tau_k|^{2(\nu-\alpha)} \ltn \X \rtn_{\nu,\Delta^2([\tau_k,\tau_{k+1}])} < 1,\\ 
		\ltn [x] \rtn_{2\alpha} &\leq& |\tau_{k+1} - \tau_k|^{2(\nu-\alpha)} \ltn [x] \rtn_{\nu,\Delta^2([\tau_k,\tau_{k+1}])} < 1,
	\end{eqnarray*}
	hence it derives
	\begin{eqnarray*}
		&&\ltn g_1(\theta) \rtn_{\alpha,[\tau_k,\tau_{k+1}]} + \ltn R^{F(\theta,\theta^\prime)}\rtn_{2\alpha,[\tau_k,\tau_{k+1}]} \\
		&\leq& 2M \Big\{	|\tau_{k+1} - \tau_k|^{2\nu-2\alpha} + |\tau_{k+1} - \tau_k|^{\nu-\alpha}\Big(\ltn x \rtn_\nu +  \ltn \X \rtn_{2\nu} +  \ltn [x] \rtn_{2\nu}\Big)\Big\}(1+ \ltn \theta,\theta^\prime \rtn_{x,2\alpha})\\
		&\leq& \mu  + \mu  \ltn \theta,\theta^\prime \rtn_{x,2\alpha}. 
	\end{eqnarray*}
	Hence using the fact that $\theta^\prime = g_1(\theta)$ and $F(\theta,\theta^\prime) = \theta$ we conclude that
	\begin{equation}
	\ltn (\theta,\theta^\prime) \rtn_{x,2\alpha,[\tau_k,\tau_{k+1}]} \leq  \frac{\mu }{1-\mu}. 
	\end{equation}
	Therefore
	\[
	\ltn (\theta,\theta^\prime) \rtn_{x,2\alpha,[a,b]} \leq \frac{\mu}{1-\mu} N_{\frac{\mu}{2M},[a,b],\nu,\alpha}(\bx), 
	\]
	where $N_{\frac{\mu}{2M},[a,b],\nu,\alpha}(\bx)$ is the number of stopping times $\tau_k$ in the interval $[a,b]$. It is easy to see that
	\[
	b-a > N_{\frac{\mu}{2M},[a,b],\nu,\alpha}(\bx)\Big\{ \frac{\mu}{2M} \Big( 1 + \ltn x \rtn_{\nu,[a,b]} +  \ltn \X \rtn_{2\nu,\Delta^2([a,b])} +  \ltn [x] \rtn_{2\nu,\Delta^2([a,b])}\Big)^{-1} \Big\}^{\frac{1}{\nu-\alpha}}.
	\]
	All in all, we have just shown that for all $0\leq a \leq b \leq T$
	\begin{eqnarray}\label{alphanorm}
	&&\ltn (\theta,\theta^\prime) \rtn_{x,2\alpha,[a,b]} \notag\\
	&\leq&  \frac{b-a}{(1-\mu)\mu^{\frac{1}{\nu-\alpha}-1}} (2M)^{\frac{1}{\nu-\alpha}} \Big( 1 + \ltn x \rtn_{\nu,[a,b]} +  \ltn \X \rtn_{2\nu,\Delta^2([a,b])} +  \ltn [x]\rtn_{2\nu,\Delta^2([a,b])}\Big)^{\frac{1}{\nu-\alpha}}\notag \\
	&\leq&  \frac{(b-a)\mu}{2(1-\mu)} \Big(\frac{4M}{\mu}\Big)^{\frac{1}{\nu-\alpha}} \Big[1+ \Big(\ltn x \rtn_{\nu,[a,b]} +  \ltn \X \rtn_{2\nu,\Delta^2([a,b])} +  \ltn [x]\rtn_{2\nu,\Delta^2([a,b])}\Big)^\frac{1}{\nu-\alpha} \Big].
	\end{eqnarray}
	The other estimates for $\ltn (\theta,\theta^\prime) \rtn_{x,2\alpha,[a,b]}^2$ and $ \ltn (\theta,\theta^\prime) \rtn_{x,2\alpha,[a,b]}^4$ are direct consequences of Cauchy inequality for \eqref{alphanorm}.
	
\end{proof}

\section*{Acknowledgments}
This work was supported by the Max Planck Institute for Mathematics in the Science (MIS-Leipzig).

\end{document}